\documentclass[11pt]{article}
\pdfoutput=1
\usepackage{amstext,amsmath,amssymb,amsthm,constants} 
\usepackage{latexsym}
\usepackage{exscale}
\usepackage{color}
\usepackage{float}

\usepackage[margin=1in]{geometry} 
\usepackage{amsmath,amsthm,amssymb, graphicx, multicol, array}

\usepackage{pgfplots}
\pgfplotsset{compat=1.6}

\pgfplotsset{soldot/.style={color=blue,only marks,mark=*}} \pgfplotsset{holdot/.style={color=blue,fill=white,only marks,mark=*}}

\RequirePackage[cp1251]{inputenc}
\RequirePackage{srcltx}

\numberwithin{equation}{section}
\sloppy

\RequirePackage{srcltx}

\RequirePackage[cp1251]{inputenc}

\numberwithin{equation}{section}

\newconstantfamily{smc}{symbol=c}
\newconstantfamily{lgc}{symbol=C}

\usepackage{mathtools}

\newtheorem{Thm}{Theorem}[section]
\newtheorem{Lemma}[Thm]{Lemma}
\newtheorem{Cor}[Thm]{Corollary}

\theoremstyle{remark}
\newtheorem{Rem}{Remark}[section]

\renewcommand\hat{\widehat}
\sloppy

\def\XXint#1#2#3{{\setbox0=\hbox{$#1{#2#3}{\int}$ }
		\vcenter{\hbox{$#2#3$ }}\kern-.6\wd0}}

\thispagestyle{empty}
\title{Transitions for exceptional times in dynamical first-passage percolation}
\author{Michael Damron \thanks{The research of M. D. is supported by an NSF grant DMS-2054559 and an NSF CAREER award.} \\ \small{Georgia Tech}  \and Jack Hanson \thanks{The research of J. H. is supported by NSF grants DMS-1612921, DMS-1954257, a PSC-CUNY grant and a CUNY JFRASE award via the CUNY Office of Research and the Sloan Foundation.}\\ \small{City College of New York} \and David Harper \thanks{The research of D. H. is partially supported by NSF grant DMS-2054559.} \\ \small{Georgia Tech} \and Wai-Kit Lam \\ \small{National Taiwan University} \\ \small{University of Minnesota}}

\begin{document}
	
	\maketitle 
	\begin{abstract}
		In first-passage percolation (FPP), we let $(\tau_v)$ be i.i.d.~nonnegative weights on the vertices of a graph and study the weight of the minimal path between distant vertices. If $F$ is the distribution function of $\tau_v$, there are different regimes: if $F(0)$ is small, this weight typically grows like a linear function of the distance, and when $F(0)$ is large, the weight is typically of order one. In between these is the critical regime in which the weight can diverge, but does so sublinearly. We study a dynamical version of critical FPP on the triangular lattice where vertices resample their weights according to independent rate-one Poisson processes. We prove that if $\sum F^{-1}(1/2+1/2^k) = \infty$, then a.s.~there are exceptional times at which the weight grows atypically, but if $\sum k^{7/8} F^{-1}(1/2+1/2^k) <\infty$, then a.s.~there are no such times. Furthermore, in the former case, we compute the Hausdorff and Minkowski dimensions of the exceptional set and show that they can be but need not be equal. These results show a wider range of dynamical behavior than one sees in subcritical (usual) FPP.
	\end{abstract}

	\section{Introduction}	
	
	First-passage percolation (FPP) was introduced in the '60s by Hammersley-Welsh \cite{HW65} and is a prototypical example of a random growth model.  Such models give insight into numerous reaction-limited growth phenomena like the spread of tumors or bacterial colonies, and fluid flow in porous media \cite{HZ95}. In recent years, there has been much effort to understand the main properties of FPP, and this has led to connections with random matrices \cite{J00}, particle systems \cite{R81}, and the KPZ equation \cite{C18}. Nonetheless, there are still many fundamental open questions; see \cite{ADH17} for a recent survey.
	
	In 2015, Ahlberg introduced a dynamical version of FPP in which vertices resample their weights according to independent rate-one Poisson processes. His work is motivated by a number of important examples (including the dynamical web \cite{FNRS09,SSS17}, dynamical sensitivity of random sequence properties \cite{BHPS03}, and dynamical Bernoulli percolation \cite{HPS97}) where the behavior at random ``exceptional'' times can be dramatically different from the behavior at deterministic times. Ahlberg's result in \cite[Thm.~4]{A15} is that in the usual (subcritical) regime of FPP, there are no exceptional times when the passage time has an atypical linear growth rate. A natural question is whether the critical or supercritical regimes have exceptional times. In this paper, we focus on this question in critical FPP in two dimensions, where recent advances \cite{DHL19,DLW17,Y18} have allowed an exact characterization of the asymptotic growth rate in the static model. 
	
	Our results for dynamical critical FPP are stated in terms of the distribution function $F$ of the vertex weights. Writing $a_k = F^{-1}(1/2 + 1/2^k)$, we find that when $\sum a_k = \infty$, the model exhibits exceptional times (Theorem~\ref{thm: Hausdorff}), and when $\sum a_k <\infty$, under a weak decay condition on $a_k$, there are no exceptional times (Theorem~\ref{thm: other_side}). Furthermore, in the former case, we compute the Hausdorff dimension of the exceptional set and show that its Minkowski dimension has a transition (Theorem~\ref{thm: Minkowski}) depending on the behavior of the sequence $ka_k$. This implies that for some distributions, the exceptional set has the same Hausdorff and Minkowski dimensions, but for others, these dimensions are different.

	%

	\subsection{Critical FPP}
	
	
	We consider the model on the triangular lattice $\mathbb{T}$, embedded in $\mathbb{R}^2$ with vertex set $\mathbb{Z}^2$ and edges between points of the form $(v_1,w_1)$ and $(v_2,w_2)$ with either (a) $\|(v_1,w_1)-(v_2,w_2)\|_1 = 1$ or (b) both $v_2 = v_1 + 1$ and $w_2 = w_1 - 1$. We define i.i.d.~vertex weights $(\tau_v)_{v \in \mathbb{Z}^2}$ with some common distribution function $F$ satisfying $F(0^-) = 0$. It is typical to do this by letting $(\omega_v)_{v \in \mathbb{Z}^2}$ be a family of i.i.d.~uniform $[0,1]$ random variables, and setting $\tau_v = F^{-1}(\omega_v)$, where $F^{-1}$ is the generalized inverse
	\[
	F^{-1}(t) = \inf\{y \in \mathbb{R} : F(y) \geq t\} \text{ for } t \in (0,1).
	\]
	A path is a sequence of vertices $(v_1, \dots, v_n)$ with $v_i$ being adjacent to $v_{i+1}$ for all $i=1, \dots, n-1$. For a path $\gamma = (v_1, \dots, v_n)$, we define its passage time by
	\[
	T(\gamma) = \sum_{i=2}^n \tau_{v_i}
	\]
	(this definition naturally extends to infinite paths), and for vertex sets $A,B \subset \mathbb{Z}^2$, we define the first-passage time from $A$ to $B$ by
	\[
	T(A,B) = \inf\{T(\gamma) : \gamma \text{ is a path from a vertex in }A \text{ to a vertex in }B\}.
	\]
	If $A$ or $B$ is a singleton $\{v\}$, we replace it by $v$ in the notation: for instance, we write $T(v,w)$ for $T(\{v\},\{w\})$. Last, we put $B(n) = \{v \in \mathbb{Z}^2 : \|v\|_\infty \leq n\}$ and $\partial B(n) = \{v \in \mathbb{Z}^2 : \|v\|_\infty = n\}$.
	
	Of prime importance in FPP is the leading order growth rate of the passage time $T$ between distant vertices. An elementary argument using Fekete's lemma implies that for $x \in \mathbb{Z}^2$, the limit $g(x) = \lim_{n \to \infty} \mathbb{E}T(0,nx)/n$ exists under a mild moment condition on $\tau_v$. This can be improved to a.s.~and $L^1$ convergence using Kingman's subadditive ergodic theorem. Kesten proved \cite[Thm.~6.1]{K86} that $g$ is identically zero when $F(0) > 1/2$ (supercritical) or $F(0) = 1/2$ (critical), and that $g(x) > 0$ for all nonzero $x$ when $F(0) < 1/2$ (subcritical). In the supercritical regime, $(T(x,y))_{x,y \in \mathbb{Z}^2}$ forms a tight family, and in the subcritical regime, $T$ grows linearly. The limit $g$ does not give precise information about the growth of $T$ in the critical regime.
	
	The first step toward quantifying the growth in the critical case was made by Chayes-Chayes-Durrett \cite[Thm.~3.3]{CCD86}. They showed that when the $\tau_v$'s are Bernoulli$(1/2)$ random variables, the sequence $\mathbb{E}T(0,\partial B(n))$ is of order $\log n$. Kesten-Zhang \cite{KZ97} in '97 went further, showing that $\mathrm{Var}~T(0,\partial B(n))$ is of order $\log n$ and that $T(0,\partial B(n))$ satisfies a Gaussian central limit theorem. The results of \cite{KZ97} hold for a wider class of critical $F$: those satisfying $F(0) = F(\delta) = 1/2$ for some $\delta>0$. In these cases, Yao \cite{Y18} and Damron-Hanson-Lam \cite{DHL19} found the precise leading order behavior of $T(0,\partial B(n))$ and its variance: for instance 
	\[
	\frac{T(0,\partial B(n))}{\log n} \to \frac{I}{2\sqrt{3}\pi} \text{ a.s.},
	\]
	where $I$ is the infimum of all $\delta$ with $F(\delta)>1/2$. All of these works leave open the growth rate for $F$ satisfying $F(0) = 1/2$ but $I=0$.
	
	Zhang was the first to show in the case $I=0$ that the model displays ``double behavior.'' In \cite{Z99}, he exhibited examples of $F$ for which $F(0) = 1/2$ but the sequence $T(0,\partial B(n))$ remains bounded. Specifically, for $a>0$, if we define
	\[
	F_a(x) = \begin{cases}
		1 & \quad\text{if } x \geq \left( \frac{1}{2}\right)^{\frac{1}{a}} \\
		\frac{1}{2} + x^a &\quad\text{if } 0 \leq x \leq \left( \frac{1}{2}\right)^{\frac{1}{a}} \\
		0 & \quad\text{if } x < 0,
	\end{cases}
	\]
	then \cite[Thm.~8.1.1]{Z99} states that if $a$ is small enough, then the sequence $T(0,\partial B(n))$ is a.s.~bounded. Zhang also gave examples of $F$ with $I=0$ but such that $T(0,\partial B(n))$ diverges a.s.
	
	In '17, Damron-Lam-Wang \cite{DLW17} found an explicit condition that characterizes this double behavior. Assuming
	\begin{equation}\label{eq: critical_def}
		F(0) = \frac{1}{2}
	\end{equation}
	and $\mathbb{E}\tau_v^\alpha<\infty$ for some $\alpha > 1/6$, we have
	\begin{equation}\label{eq: asymptotics}
		\mathbb{E}T(0,\partial B(n)) \asymp \sum_{k=2}^{\lfloor \log_2 n \rfloor} F^{-1} \left( \frac{1}{2} + \frac{1}{2^k}\right),
	\end{equation}
	where $\asymp$ means that the ratio of the left and right sides is bounded away from zero and infinity as $n \to \infty$. Furthermore, under only \eqref{eq: critical_def}, if we define
	\begin{equation}\label{eq: rho_def}
		\rho = \lim_{n \to \infty} T(0,\partial B(n)),
	\end{equation}
	which can be written equivalently as
	\begin{equation}\label{eq: rho_equivalent}
		\rho = \inf\left\{ T(\Gamma) : \Gamma \text{ is an infinite self-avoiding path starting at }0\right\},
	\end{equation}
	then by the Kolmogorov zero-one law, $\mathbb{P}(\rho = \infty) \in \{0,1\}$ and
	\begin{equation}\label{eq: rho_equivalence}
		\rho < \infty~\text{a.s. } \Leftrightarrow \sum_{k=2}^\infty F^{-1} \left( \frac{1}{2} + \frac{1}{2^k} \right) < \infty.
	\end{equation}
	These results were stated on the square lattice but the proofs remain valid on $\mathbb{T}$.

	\subsection{Dynamical critical FPP}\label{sec: dynamical_intro}
	
	In this section, we introduce Ahlberg's version of dynamical FPP. Let $(\mathfrak{s}_v) = (\mathfrak{s}_v(t) : v \in \mathbb{Z}^2, t \in [0,\infty) )$ be a family of i.i.d.~rate one Poisson processes and let $(\omega_v^{(n)}: v \in \mathbb{Z}^2, n \geq 0)$ be a family of i.i.d.~uniform $[0,1]$ random variables. For a distribution function $F$ satisfying $F(0^-)=0$, set
	\[
	\tau_v(t) = F^{-1}(\omega_v^{(n)}) \text{ for } t \in [0,\infty) \text{ such that } \mathfrak{s}_v(t) = n.
	\]
	For fixed $t$, the family $(\tau_v(t))_{v \in \mathbb{Z}^2}$ is i.i.d.~with common distribution function $F$. We will also define $T_t(v,w)$ and $T_t(A,B)$ as the corresponding passage times in the environment $(\tau_v(t))_{v \in \mathbb{Z}^2}$. Last we set $\rho_t = \lim_{n \to \infty} T_t(0,\partial B(n))$.
	
	We will be interested in ``exceptional times,'' or times at which the dynamical model behaves differently than the static one.  Such questions were studied in \cite{A15}, where it was shown that in the subcritical regime, a.s.~there are no times at which $T$ displays atypical leading order growth. Precisely, for any $\epsilon>0$, under a mild moment condition on $\tau_v$, one has
	\[
	\sum_{z \in \mathbb{Z}^2} \mathbb{P}\left( \sup_{t \in [0,1]} \left| T_t(0,z) - g(z)\right| > \epsilon |z|\right) < \infty.
	\]
	This result extends to dimensions bigger than two. Ahlberg's work on the subcritical regime motivates the question of whether exceptional times exist for critical or supercritical FPP. The present paper initiates the study of the dynamical behavior of critical FPP.
	
	Due to the dichotomy in \eqref{eq: rho_equivalence}, exceptional has a different meaning depending on which of the conditions
	\begin{equation}\label{eq: finite_sum}
		\sum_{k=2}^\infty F^{-1} \left( \frac{1}{2} + \frac{1}{2^k} \right) < \infty
	\end{equation}
	and
	\begin{equation}\label{eq: infinite_sum}
		\sum_{k=2}^\infty F^{-1} \left( \frac{1}{2} + \frac{1}{2^k} \right) = \infty
	\end{equation}
	holds. Namely, under \eqref{eq: critical_def}, we define the exceptional sets
	\begin{equation}\label{eq: exceptional_def}
		\begin{split} 
			& \{t \geq 0 : \rho_t = \infty\} \text{ under \eqref{eq: finite_sum}} \text{ and} \\
			& \{t \geq 0 : \rho_t \leq x\} \text{ under \eqref{eq: infinite_sum} for } x \in [0,\infty).
		\end{split}
	\end{equation}

	%
	%
	%
	%
	%
	%
	%
	%
	%
	Note that by time-stationarity and \eqref{eq: rho_equivalence}, any fixed $t \geq 0$ a.s.~is not in any of these exceptional sets: under \eqref{eq: critical_def}, for any $t \geq 0$, $\mathbb{P}(\rho_t \leq x) = 0$ for $x \in [0,\infty)$ when \eqref{eq: infinite_sum} holds, and $\mathbb{P}(\rho_t = \infty) = 0$ when \eqref{eq: finite_sum} holds. We can therefore apply Fubini's theorem to find that their Lebesgue measures are zero a.s. Thus we are led to consider their fractal dimensions.
	
	We recall the different notions of dimension that we will consider; we quote their definitions from \cite[Sec.~14.1]{LP16}. If $E$ is a bounded subset of $\mathbb{R}^d$ for some $d \geq 1$ and $\epsilon>0$, let $N(E,\epsilon)$ be the minimum number of balls of diameter at most $\epsilon$ required to cover $E$. Then the upper and lower Minkowski dimensions of $E$ are
	\begin{eqnarray*}
		\text{dim}^\text{M}(E) &=\limsup_{\epsilon \to 0} \frac{\log N(E,\epsilon)}{\log \frac{1}{\epsilon}} \\
		\text{dim}_\text{M}(E) &=\liminf_{\epsilon \to 0} \frac{\log N(E,\epsilon)}{\log \frac{1}{\epsilon}}.
	\end{eqnarray*}
	For $\alpha>0$ and $E \subset \mathbb{R}^d$ that is possibly unbounded, the Hausdorff $\alpha$-dimensional (outer) measure of $E$ is
	\begin{equation}\label{eq: hausdorff_definition}
		\mathcal{H}_\alpha(E) = \lim_{\epsilon \to 0} ~\inf\left\{ \sum_{i=1}^\infty \left( \text{diam }E_i\right)^\alpha : E \subset \bigcup_{i=1}^\infty E_i \text{ and diam }E_i < \epsilon \text{ for all }i\right\}.
	\end{equation}
	For any $E \subset \mathbb{R}^d$, there is a number $\alpha_0$ such that if $\alpha < \alpha_0$ then $\mathcal{H}_\alpha(E) = \infty$ and if $\alpha > \alpha_0$ then $\mathcal{H}_\alpha(E) = 0$. This number is called the Hausdorff dimension of $E$, and we write it as $\text{dim}_\text{H}(E)$. It is standard that for bounded $E$,
	\begin{equation}\label{eq: dimension_inequalities}
		\text{dim}^\text{M}(E) \geq \text{dim}_\text{M}(E) \geq \text{dim}_\text{H}(E).
	\end{equation}
	Furthermore, Hausdorff dimension has the ``countable stability'' property, which states that if $E_1, E_2, \dots$ are subsets of $\mathbb{R}^d$, then
	\[
	\text{dim}_\text{H}\left( \bigcup_{i=1}^\infty E_i\right) = \sup\left\{ \text{dim}_\text{H}(E_i) : i \geq 1\right\}.
	\]

	\subsection{Main results}\label{sec: main_results}
	To state our results, we use the shorthand
	\[
	a_k = F^{-1}\left( \frac{1}{2} + \frac{1}{2^k}\right) \text{ for } k \geq 2,
	\]
	so that \eqref{eq: finite_sum} and \eqref{eq: infinite_sum} become $\sum a_k < \infty$ and $\sum a_k = \infty$ respectively. The first theorem gives the Hausdorff dimension of the set of exceptional times when $a_k$ is not summable.
	\begin{Thm}\label{thm: Hausdorff}
		If $F$ satisfies \eqref{eq: critical_def} and \eqref{eq: infinite_sum}, then
		\[
		\textnormal{dim}_\textnormal{H}\left( \{t \geq 0 : \rho_t < \infty\}\right) = \frac{31}{36} \text{ a.s.}
		\]
	\end{Thm}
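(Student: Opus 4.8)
The plan is to get the lower bound essentially for free from dynamical critical percolation, and to spend the work on the matching upper bound, whose content is that the event $\{\rho_t<\infty\}$ — a priori more generous than ``$0$ lies in an infinite zero-cluster'' — produces no larger an exceptional set. For the lower bound: for each $v$ the indicator $t\mapsto\mathbf{1}\{\tau_v(t)=0\}=\mathbf{1}\{\omega_v^{(\mathfrak{s}_v(t))}\le 1/2\}$ is refreshed to an independent $\mathrm{Bernoulli}(1/2)$ variable at the rings of its rate-one clock, so $(\mathbf{1}\{\tau_v(t)=0\})_v$ is exactly dynamical critical site percolation on $\mathbb{T}$. I would invoke the Garban--Pete--Schramm theorem that the set of times at which $0$ lies in an infinite cluster of this process has Hausdorff dimension $31/36$ a.s. At such a time there is an infinite zero-weight self-avoiding path from $0$, so $\rho_t=0$; hence that set is contained in $\{t\ge0:\rho_t<\infty\}$, and the lower bound follows.

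\textbf{Upper bound: reduction.} By time-stationarity I restrict to $[0,1]$, and by countable stability of Hausdorff dimension it suffices to show $\text{dim}_\text{H}\{t\in[0,1]:\rho_t\le m\}\le 31/36$ a.s.\ for each fixed $m\in\N$. Since $\rho_t=\lim_n T_t(0,\partial B(n))$ is a nondecreasing limit, $\{t:\rho_t\le m\}\subseteq\{t:T_t(0,\partial B(R))\le m\}$ for every $R$, and a first-moment/covering argument reduces the claim to: for a suitable scale $R=R_m(\delta)\to\infty$,
\[
\P\big(\exists\,t\in[0,\delta]:\ T_t(0,\partial B(R_m(\delta)))\le m\big)\ \le\ \delta^{5/36+o(1)}\qquad(\delta\to 0),
\]
with $o(1)$ allowed to depend on $m$: this yields $\E\,N(\{t\in[0,1]:\rho_t\le m\},\delta)\le\delta^{-1}\delta^{5/36+o(1)}=\delta^{-31/36+o(1)}$, hence $\mathcal H_\alpha=0$ a.s.\ for every $\alpha>31/36$, and a countable union over $m$ (and over $[0,N]$, $N\to\infty$) then gives the theorem.

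\textbf{Upper bound: the key estimate.} The static event $A^{(m)}_R:=\{T_0(0,\partial B(R))\le m\}$ should be governed by an \emph{effective one-arm scale} $\sigma=\sigma_m(R)\le R$: achieving $T_0(0,\partial B(R))\le m$ essentially forces a zero one-arm from $0$ out to distance $\sigma$, followed by a traversal of the shell $B(R)\setminus B(\sigma)$ of total cost $\le m$. Since $\sum a_k=\infty$, by \eqref{eq: rho_equivalence} one has $\rho_0=\infty$ a.s., which forces $\sigma_m(R)\to\infty$ as $R\to\infty$ (possibly slowly). I would establish three facts, writing $\alpha_1(\ell)=\ell^{-5/48+o(1)}$, $\alpha_4(\ell)=\ell^{-5/4+o(1)}$ for the percolation arm probabilities. \emph{(i) Static estimate:} refining the Damron--Lam--Wang analysis behind \eqref{eq: asymptotics}, and using that $(a_k)$ is nonincreasing so that only $\sigma^{o(1)}$ coarse traversal patterns are admissible, $\P(A^{(m)}_R)=\alpha_1(\sigma_m(R))^{1+o(1)}=\sigma_m(R)^{-5/48+o(1)}$. \emph{(ii) Characteristic time:} the event $\{T_\cdot(0,\partial B(R))\le m\}$ decorrelates on the time scale $\tau_m(R)=(\sigma_m(R)^2\alpha_4(\sigma_m(R)))^{-1}=\sigma_m(R)^{-3/4+o(1)}$, because its pivotal vertices lie in the one-arm part $B(\sigma)$ while the traversal part is robust (its cost concentrates strictly below $m$). \emph{(iii) Dynamical estimate:} over a window of length $\tau_m(R)$, the probability of ever seeing the event exceeds its static probability by at most a subpolynomial factor,
\[
\P\big(\exists\,t\in[0,\tau_m(R)]:\ T_t(0,\partial B(R))\le m\big)\ \le\ \P\big(A^{(m)}_R\big)\cdot\sigma_m(R)^{o(1)}.
\]
Taking $\delta=\tau_m(R)$ and $R=R_m(\delta)$ accordingly gives $\sigma_m(R_m(\delta))=\delta^{-4/3+o(1)}$, and combining (i)--(iii),
\[
\P\big(\exists\,t\in[0,\delta]:\ T_t(0,\partial B(R_m(\delta)))\le m\big)=\sigma_m(R_m(\delta))^{-5/48+o(1)}=\delta^{(4/3)(5/48)+o(1)}=\delta^{5/36+o(1)},
\]
which is the required estimate. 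The dependence on $m$ and on $F$ enters only through $\sigma_m(R)$ and cancels between the one-arm exponent $5/48$ and the characteristic-time exponent $3/4$, leaving the universal value $5/36$ (equivalently $31/36=1-5/36$).

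\textbf{Main difficulty.} The lower bound and the final algebra are short; the content is in items (i) and (iii). The hard part will be (iii): running the Garban--Pete--Schramm noise-sensitivity / spectral-sample and quasi-multiplicativity machinery for the first-passage event $\{T_0(0,\partial B(R))\le m\}$ rather than for a bare one-arm — either directly, by showing its spectral sample concentrates at the same scale $\sigma_m(R)^2\alpha_4(\sigma_m(R))$ as that of a one-arm at scale $\sigma_m(R)$, or by decomposing $A^{(m)}_R$ into a subpolynomial-in-$\sigma_m(R)$ union of one-arm events (to which GPS applies) together with ``robust'' cheap-traversal events. The second main technical point is the sharp static estimate (i), including identifying $\sigma_m(R)$ and its monotonicity/growth properties for an arbitrary admissible sequence $(a_k)$ with $\sum a_k=\infty$, and controlling the combinatorics of traversal patterns so that the implied factor is $\sigma^{o(1)}$ and not merely polynomial in $R$.
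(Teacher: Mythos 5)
Your lower bound is exactly the paper's: dynamical critical percolation via Garban--Pete--Schramm, together with the inclusion $\{t:\text{origin in an infinite zero-cluster}\}\subset\{t:\rho_t<\infty\}$. Your upper-bound framework (stationarity, countable stability over $m$, a first-moment covering at time resolution $\delta$ comparable to the near-critical window, and the arithmetic $31/36=1-\frac{5/48}{3/4}$) is also the correct skeleton. But the two inputs that carry all the weight, your (i) and (iii), are not proved and are not available in the literature, so as it stands the argument has a genuine gap. For (i), a sharp lower-tail estimate $\mathbb{P}(T(0,\partial B(R))\le m)=\sigma_m(R)^{-5/48+o(1)}$ is a nontrivial static large-deviation statement; your justification that monotonicity of $(a_k)$ leaves only $\sigma^{o(1)}$ ``coarse traversal patterns'' is unsubstantiated (the number of ways to spend a budget $m$ across $\log\sigma$ scales is not obviously subpolynomial), and the claim in (ii) that the traversal cost ``concentrates strictly below $m$'' does not follow from the definition of the effective scale without building in margins. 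For (iii), the spectral-sample route is misdirected: GPS/SS noise-sensitivity machinery is the tool for second-moment lower bounds on exceptional times, whereas upper bounds on their dimension are normally obtained by the elementary freeze-and-sprinkle step ($\mathbb{P}(\exists t\in[0,\delta]:B_t)\le\mathbb{P}(B_0^{\mathrm{sprinkled}})$, treating every vertex whose clock rings in $[0,\delta]$ as favorable); but that route then requires your static estimate (i) at a slightly supercritical parameter, i.e.\ a near-critical version of the Damron--Lam--Wang analysis with lower-tail control, which is again unproven. So the proposal defers precisely the hard content.

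For comparison, the paper's proof of the upper bound avoids both of your key lemmas. It never estimates $\mathbb{P}(T\le m)$: instead, for a large $L$ it considers the annuli $\mathrm{Ann}(L^{k-1},L^k)$ and the events $A_k(t)$ that they contain a circuit of vertices with $\omega_v(t)\ge q_k$, where $q_k-\frac12=\frac12(p_{L^k}-\frac12)$; since any infinite path crosses every such circuit, $\rho_t\ge\sum_k F^{-1}(q_k)\mathbf{1}_{A_k(t)}$, and a summation-by-parts argument using only \eqref{eq: infinite_sum} shows that $\rho_t<\infty$ forces $\liminf_n\frac1n\sum_{k\le n}\mathbf{1}_{A_k(t)}=0$, i.e.\ a density close to $1$ of annuli with a dual crossing. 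The dynamics are handled by discretizing time at the near-critical window scale $\Delta_k\asymp p_{L^k}-\frac12$ and declaring a vertex bad if it is low at the start of its subinterval or its clock rings there; the resulting crossing events have probability $\le C\,\pi_1(L^{k-1},L^k)\le C L^{-\delta_1}$, $\delta_1<5/48$, by Nolin's near-critical comparability \eqref{eq: near_critical_arm_event_comparability}, and a binomial large-deviation bound over the $n$ annuli, combined with $\Delta_n^{-1}=L^{3n/4+o(1)}$ from \eqref{eq: p_n_bounds}, gives $\dim_{\mathrm H}\le 1-\delta_1/\epsilon_1$, which tends to $31/36$ as $x\uparrow1$, $L\to\infty$, $\delta_1\to5/48$, $\epsilon_1\to3/4$. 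If you want to salvage your route, you would either need to prove (i) (in its sprinkled, near-critical form) or replace the monolithic event $\{T\le m\}$ by a multi-scale decomposition of the above type; the latter is essentially the paper's proof.
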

	We will prove Theorem~\ref{thm: Hausdorff} in Section~\ref{sec: Hausdorff}. Alternatively, one can consider the subset $\mathcal{E}_x = \{t \geq 0 : \rho_t \leq x\}$ for $x \geq 0$. In the case $x=0$, this is the same as the set of exceptional times for critical Bernoulli percolation, which has Hausdorff dimension $31/36$ a.s. \cite{GPS10,SS10}. By monotonicity and Theorem~\ref{thm: Hausdorff}, we also obtain $\text{dim}_\textnormal{H}~\mathcal{E}_x = 31/36$ for all $x \in [0,\infty)$ a.s.

	
	
	The next theorem shows that the upper Minkowski dimension of the set of exceptional times can differ from the Hausdorff dimension if $a_k$ is not summable, but decays quickly enough to 0. Because Minkowski dimension is defined for bounded sets (although there are some modifications for unbounded sets), we intersect the set of exceptional times with $[0,s]$ and take $s$ to infinity. 
	
	\begin{Thm}\label{thm: Minkowski}
		Suppose that $F$ satisfies \eqref{eq: critical_def} and \eqref{eq: infinite_sum}.
		\begin{enumerate}
			\item If $ka_k \to \infty$, then for any $x \in [0,\infty)$,
			\[
			\mathbb{P}\left( \textnormal{dim}^\textnormal{M}\left( \{t \in [0,s] : \rho_t \leq x\}\right) = \frac{31}{36} \right) \to 1 \text{ as } s \to \infty.
			\]
			\item If $\liminf_{k \to \infty} ka_k = 0$,
			then for any $x \in (0,\infty)$,
			\[
			\mathbb{P}\left( \textnormal{dim}^\textnormal{M}\left( \{t \in [0,s] : \rho_t \leq x\}\right) = 1 \right) \to 1 \text{ as } s \to \infty.
			\]
		\end{enumerate}
	\end{Thm}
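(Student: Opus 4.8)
The plan is to pass, in both parts, to the covering number of the exceptional set at dyadic scales and to reduce the whole statement to one ``one-window'' probability together with a pair-correlation bound. Fix $x$, and for $s>0$ write $E_s=\{t\in[0,s]:\rho_t\le x\}$. For $\epsilon=2^{-m}$ split $[0,s]$ into the consecutive length-$\epsilon$ intervals $I_1^\epsilon,\dots,I_{\lceil s/\epsilon\rceil}^\epsilon$; since any cover of $E_s$ by sets of diameter $\le\epsilon$ meets at most two of the $I_j^\epsilon$,
\[
\tfrac12\sum_j\mathbf{1}\{E_s\cap I_j^\epsilon\ne\emptyset\}\ \le\ N(E_s,\epsilon)\ \le\ \sum_j\mathbf{1}\{E_s\cap I_j^\epsilon\ne\emptyset\},
\]
so by time-stationarity $\mathbb E\,N(E_s,\epsilon)\asymp (s/\epsilon)\,\pi_\epsilon$ with $\pi_\epsilon:=\mathbb P(\exists\,t\in[0,\epsilon]:\rho_t\le x)$. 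Everything is then controlled by the decay of $\pi_\epsilon$ and by a correlation bound for the events $\{E_s\cap I_j^\epsilon\ne\emptyset\}$. The heuristic I would make precise is that a window of length $\epsilon$ activates the spatial scales above $\epsilon^{-4/3}$ (where a bounded number of pivotals for an annulus crossing flip during time $\epsilon$), so that an exceptional time in $[0,\epsilon]$ forces a cheap first-passage path across the frozen scales $2^k\lesssim\epsilon^{-4/3}$; there the cost accumulates at rate $a_k$ per scale while only a $\asymp 1/k$-type budget per scale is available to keep the total finite --- which is exactly the comparison of $ka_k$ with a constant. Concretely I would aim for
\[
\pi_\epsilon=\epsilon^{\,5/36+o(1)}\ \text{ when }ka_k\to\infty,
\qquad
\pi_{\epsilon_j}=\epsilon_j^{\,o(1)}\ \text{ along some }\epsilon_j\downarrow0\ \text{ when }\liminf_k ka_k=0,\ x>0.
\]

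Given the upper estimate $\pi_\epsilon\le\epsilon^{5/36-\delta}$ for every $\delta>0$, part~1 is then mostly bookkeeping. For the upper bound, Markov gives $\mathbb P\big(N(E_s,2^{-m})\ge 2^{m(31/36+\eta)}\big)\le C_\eta\,s\,2^{-m\eta/2}$, which is summable in $m$; Borel--Cantelli and the standard comparison between dyadic and general scales yield $\mathrm{dim}^{\mathrm M}(E_s)\le 31/36$ a.s., for every $s$ and $x$. For the reverse inequality I would invoke $\mathrm{dim}^{\mathrm M}\ge\mathrm{dim}_{\mathrm H}$, Theorem~\ref{thm: Hausdorff} (together with the remark that $\mathrm{dim}_{\mathrm H}\,\mathcal E_x=31/36$ a.s.), countable stability, and the fact that the Frostman measure underlying that lower bound, restricted to $[0,s]$, is nonzero with probability tending to $1$ as $s\to\infty$ and has finite $\beta$-energy for every $\beta<31/36$; this gives $\mathrm{dim}_{\mathrm H}(E_s)=31/36$ with probability $\to1$, and combining with the a.s.\ upper bound proves part~1.

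For part~2 the bound $\mathrm{dim}^{\mathrm M}(E_s)\le1$ is automatic, so the content is the lower bound $\mathrm{dim}^{\mathrm M}(E_s)\ge1$, which I would obtain by a second--moment argument on $N(E_1,\epsilon_j)$ along the good subsequence. The first moment is $\mathbb E\,N(E_1,\epsilon_j)\asymp \epsilon_j^{-1}\pi_{\epsilon_j}\ge \epsilon_j^{-1+o(1)}$. For the second moment I would prove the quasi-multiplicativity / pair-correlation estimate
\[
\mathbb P\big(E_1\cap I_i^\epsilon\ne\emptyset,\ E_1\cap I_j^\epsilon\ne\emptyset\big)\ \lesssim\ \pi_\epsilon^{\,2}\,\big(|i-j|\,\epsilon\big)^{-5/36}
\]
(the scales below the decorrelation scale $(|i-j|\epsilon)^{-4/3}$ are shared by the two events and cost at least $\alpha_1\big((|i-j|\epsilon)^{-4/3}\big)\asymp(|i-j|\epsilon)^{5/36}$ in probability, the rest being essentially independent); summing over the $O(1/\epsilon)$ pairs and using $\sum_{n\le 1/\epsilon}n^{-5/36}\asymp\epsilon^{-31/36}$ gives $\mathbb E\big[N(E_1,\epsilon_j)^2\big]\lesssim\big(\mathbb E\,N(E_1,\epsilon_j)\big)^2\big(1+\epsilon_j/\pi_{\epsilon_j}\big)\asymp\big(\mathbb E\,N(E_1,\epsilon_j)\big)^2$ since $\pi_{\epsilon_j}\gg\epsilon_j$. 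Paley--Zygmund then yields $N(E_1,\epsilon_j)\ge \epsilon_j^{-1+o(1)}$ with probability at least some $c>0$ for each $j$, and intersecting these events over $j\ge J$ gives $\mathbb P\big(\mathrm{dim}^{\mathrm M}(E_1)=1\big)\ge c$. Finally $\mathrm{dim}^{\mathrm M}$ of a finite union is the maximum, so $\mathrm{dim}^{\mathrm M}(E_s)=\max_{n<s}\mathrm{dim}^{\mathrm M}\big(\mathcal E_x\cap[n,n{+}1]\big)$, and by time-stationarity and mixing of the dynamics the probability that some summand equals $1$ tends to $1$ as $s\to\infty$. Hence $\mathrm{dim}^{\mathrm M}(E_s)=1$ with probability $\to1$.

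The \textbf{main obstacle} is the one-window estimate for $\pi_\epsilon$ --- and within it the lower bound $\pi_{\epsilon_j}\ge\epsilon_j^{o(1)}$ under $\liminf_k ka_k=0$ (for $x>0$), which is the genuinely new part. It requires a multi-scale construction producing, with only sub-polynomially small probability inside a window of length $\epsilon_j$, a time at which an infinite path from $0$ of total weight $\le x$ exists: the mechanism would be to use the scales $k_j$ with $k_ja_{k_j}\to0$ to make the corresponding annulus crossings essentially free and harvest almost all remaining crossings from the resampling, so that the accumulated cost stays below $x$. Making rigorous the split between frozen scales (where only the static first-passage lower tail is available, and which cannot be made cheap with more than polynomially small probability) and active scales (where the dynamics can be exploited), and checking that it is exactly the dichotomy $ka_k\to\infty$ vs.\ $\liminf ka_k=0$ that separates the two regimes, is the heart of the matter; the pair-correlation bound above is the remaining technical input, and it should follow the template of the corresponding estimate in the proof of Theorem~\ref{thm: Hausdorff}.
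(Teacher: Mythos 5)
Your architecture matches the paper's: both parts reduce to a one-window probability estimate together with either Markov/Borel--Cantelli (for the upper bound) or a second-moment/Paley--Zygmund argument with a pair-correlation input (for the lower bound); the lower bound $\textnormal{dim}^\textnormal{M}\ge\textnormal{dim}_\textnormal{H}\ge 31/36$ via \cite{GPS10} is exactly how the paper handles part~1's reverse inequality; and the final ergodicity step to lift $\mathbb{P}(\cdot)\ge c$ to $\mathbb{P}(\cdot)\to 1$ is the paper's Step~4. However the proposal does not prove either of the two one-window estimates it is built on, and the heuristic you give for the first one is not the mechanism that actually works.

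For the upper bound in part~1, ``a bounded number of pivotals for an annulus crossing flip during time $\epsilon$'' is not the argument. The paper's Lemma~\ref{lem: expectation_covering_upper} introduces the auxiliary process $\sigma_v=\mathbf{1}\{\omega_v(0)\ge \tfrac12+\epsilon,\ \mathfrak{s}_v\text{ does not increment in }[0,\epsilon)\}$, observes that any $t\in[0,\epsilon]$ with $T_t(0,\partial B(2^n))\le x$ forces the existence of a path to $\partial B(2^n)$ containing at most $y\asymp x/F^{-1}(\tfrac12+\epsilon)$ vertices with $\sigma_v=1$, and then decomposes that path into at most $y+1$ annulus crossings by $\sigma$-closed vertices, bounding the cost by $\binom{n}{y}\pi_1(2^n)$ up to $C^{y+1}$. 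The hypothesis $ka_k\to\infty$ is what makes $y=o(k)$ after setting $\epsilon=2^{-k}$ and $n\approx(4/3)k$, so that the binomial is $2^{o(k)}$ and the final bound is $\pi_\epsilon\le\epsilon^{5/36-\delta}$. Your ``budget per scale'' remark gestures at this, but the pivotal-switching picture would lead you astray, and the crucial point --- that $\binom{n}{y}$ is controlled precisely because $y/n\to 0$ under $ka_k\to\infty$ --- is missing.

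For part~2 you acknowledge that $\pi_{\epsilon_j}\ge\epsilon_j^{o(1)}$ along the good subsequence is the heart of the matter and leave it open; this is indeed where the paper spends most of its effort, via a three-stage gluing: (i) a second-moment count of subintervals where $T_t(0,\partial B(2^n))=0$, using the dynamical decorrelation bound from \cite[Eq.~(5.4)]{SS10}; (ii) a second-moment bound with positive probability that $T_t(B(2^n),\infty)\le \Cr{C: step_2}\,nL\,a_{\lfloor\delta n\rfloor}$ for some $t$ in a window of length $2^{-nL/9}$, which is where the truncation machinery of Section~\ref{sec: ruling_out} and the closure argument via \cite[Lem.~3.2]{HPS97} come in; and (iii) decoupling across an innermost circuit so that (i) and (ii) can be combined independently. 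The subsequence $n_ka_{\lfloor\delta n_k\rfloor}\to 0$ (from $\liminf ka_k=0$) is then what lets you absorb $\Cr{C: step_2}\,n_kL\,a_{\lfloor\delta n_k\rfloor}$ into any fixed $x>0$, with $L$ sent to infinity afterwards to push the dimension to $1$. Finally, your pair-correlation estimate $\mathbb{P}(I_i,I_j)\lesssim\pi_\epsilon^2(|i-j|\epsilon)^{-5/36}$ with the ``decorrelation scale $(|i-j|\epsilon)^{-4/3}$'' justification will not survive scrutiny: the paper derives a bound of the form $\pi_1(r,R)^2\,\pi_1(r,\lceil t^{-9}\rceil)^{-1}$ via the Fourier--Walsh spectrum of \cite{SS10}, not a spatial-scale-sharing argument, and the relevant cutoff scale for the time $t$ is far from $t^{-4/3}$ (the choice $t^{-9}$ is made so that $\pi_2(s)/t^2\to 0$). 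The integrability you need does hold under both your exponent and theirs, so your second-moment conclusion is not wrong, but the bound itself needs a different proof.

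In short: the proposal correctly identifies the strategy and the two decisive lemmas, but leaves both lemmas essentially unproven, and the heuristic mechanisms you suggest for them (pivotal switching for the upper bound; scale-sharing for the pair correlation) are not the ones that make the estimates go through.
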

	We will prove Theorem~\ref{thm: Minkowski} in Section~\ref{sec: Minkowski}. We note that for any $F$ satisfying \eqref{eq: critical_def}, the set $\cup_{x \in \mathbb{N}} \{t \in [0,s] : \rho_t \leq x\} = \{t \in [0,s] : \rho_t <\infty\}$ contains the set of $t \in [0,s]$ for which there exists an infinite component of vertices of weight 0. By results of \cite{GPS10,SS10}, the latter is a.s.~dense in $[0,s]$ for $s>0$ and therefore has Minkowski dimension 1. Because Minkowski dimension is not countably stable, this fact does not contradict Theorem~\ref{thm: Minkowski}. 
	
	The proofs of Theorems~\ref{thm: Hausdorff} and \ref{thm: Minkowski} can also be used in the intermediate regime where $ka_k$ does not converge to 0 or $\infty$. From them, we obtain that for some $F$ satisfying \eqref{eq: critical_def} and \eqref{eq: infinite_sum}, 
	\[
	(\dagger)
	\begin{array}{c}
		\text{ the set of exceptional times has different upper and lower Minkowski dimensions, and} \\
		\text{ the upper Minkowski dimension of the set of $t$ where $\rho_t \leq x$ depends on $x$.}
	\end{array}
	\]
	We give formal statements of these facts in Section~\ref{sec: formal_statements} and briefly indicate how to obtain them from the above proofs.

	%
	%

	Our last result covers the case when $\sum a_k <\infty$. Here, we find the set of exceptional times is empty if $a_k$ does not decay very slowly.
	\begin{Thm}\label{thm: other_side}
		Suppose that $F$ satisfies \eqref{eq: critical_def} and \eqref{eq: finite_sum}. Then
		\[
		\sum_{k=2}^\infty k^{\frac{7}{8}} a_k < \infty \Rightarrow \text{a.s., } \{t \geq 0 : \rho_t = \infty\} = \emptyset.
		\]
	\end{Thm}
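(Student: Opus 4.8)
\emph{Proof idea.} The plan is to prove the stronger statement that for every $s>0$, a.s.\ $\sup_{t\in[0,s]}\rho_t<\infty$; taking $s\in\N$ and a union bound then gives $\{t\ge 0:\rho_t=\infty\}=\emptyset$ a.s. Fix $s=1$. For $k\ge 2$ set $A_k=B(2^k)\setminus B(2^{k-1})$ and let $Z_k(t)$ be the minimal $T_t$-passage time over paths contained in $A_k$ that cross from $\partial B(2^{k-1})$ to $\partial B(2^k)$ and contain a circuit around $0$. Splicing, for consecutive $k$, a crossing of $A_k$ to the circuit of $A_{k+1}$ yields an infinite self-avoiding path, so by \eqref{eq: rho_equivalent} there is a universal $C_0$ with $\rho_t\le C_0\sum_{k\ge 2}Z_k(t)$ for all $t$. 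Hence, with $\lambda_k:=C_1 k^{7/8}a_k$ for suitable $C_1$, it suffices to prove $\sum_{k\ge 2}\P\!\left(\sup_{t\in[0,1]}Z_k(t)>\lambda_k\right)<\infty$: by Borel--Cantelli one then gets $\sup_{t\in[0,1]}Z_k(t)\le\lambda_k$ for all large $k$ a.s., and since $\sum_k\lambda_k<\infty$ by hypothesis, $\sup_{t\in[0,1]}\rho_t\le C_0\sum_k\sup_{t\in[0,1]}Z_k(t)<\infty$ a.s.

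I would first establish a static tail bound for $Z_k$. At a fixed time the cheapest crossing of $A_k$ uses weight-$0$ vertices --- which form critical site percolation and, by RSW, cross $A_k$ with a circuit with probability at least a universal $c_0>0$ --- wherever possible, detouring through positive-weight vertices only where forced. Since $F(a_j)\ge 1/2+2^{-j}$, the set $\{\tau_v\le a_j\}$ is supercritical by only $2^{-j}$, hence is within its correlation length throughout $A_k$ when $j\le k$; an RSW/BK multiscale count of the detour (``patch'') vertices and their weights, as in the proof of \eqref{eq: asymptotics} in \cite{DLW17} and using $\E\tau_v^\alpha<\infty$ for some $\alpha>1/6$ to handle patch weights, should give both $\E Z_k\le C a_k$ and a tail bound $\P(Z_k\ge u a_k)\le h(u)$ with $h$ decaying fast enough that $\sum_k h(c_1 k^{7/8})<\infty$ for small $c_1$.

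The dynamical step is then to bound $\P(\sup_{t\in[0,1]}Z_k(t)>\lambda_k)$ by discretizing $[0,1]$ into $N_k$ intervals of length $1/N_k$, $N_k$ to be optimized, and union bounding. On an interval $I=[t_0,t_0+1/N_k]$, if $\gamma^\ast$ is a $T_{t_0}$-optimal crossing-with-circuit then $\gamma^\ast$ only gains weight at vertices resampling during $I$, so $\sup_{t\in I}Z_k(t)\le Z_k(t_0)+\Delta_I$ with $\Delta_I$ the total resampled weight along $\gamma^\ast$; the number of resampled vertices that can actually change the optimal cost is governed by the four-arm/pivotal structure of the crossing-circuit event, whose pivotal count has expectation $\asymp 2^{2k}\alpha_4(2^k)\asymp 2^{3k/4}$, so for fixed $t_0$ the expected number of ``relevant'' resamplings in $I$ is $\lesssim 2^{3k/4}/N_k$, and together with the Step~2 control on patch weights one estimates $\P(\Delta_I>\tfrac12\lambda_k)$. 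Feeding the Step~2 tail into $\P(Z_k(t_0)>\tfrac12\lambda_k)$, summing over the $N_k$ intervals, and optimizing $N_k$ against the four-arm volume $2^{3k/4}$ should yield $\P(\sup_{t\in[0,1]}Z_k(t)>\lambda_k)\le e^{-ck^{7/8}}$ or better, which is summable --- this is where the exponent $7/8$ gets produced.

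The hard part is this last step. The main obstacles: (i) the $t_0$-optimal crossing $\gamma^\ast$ is random and may contain polynomially many vertices in $2^k$, so one cannot simply sum weights along it --- the argument must be built around the pivotal/four-arm structure, using that the optimal cost changes only when a pivotal vertex resamples and that cheap crossings re-form quickly afterward, a stability input in the spirit of the noise-sensitivity estimates of \cite{GPS10,SS10}; (ii) the weights have only an $\alpha$-moment with $\alpha>1/6$, so a resampled vertex acquiring weight $\gg a_k$ must be truncated and handled separately, the point being that weight-$0$ circuits in nearby sub-annuli are abundant enough to detour around such a vertex at negligible cost; (iii) combining the multiscale patch count, the time-discretization, and the decorrelation rate of annulus crossings (set by the four-arm exponent $5/4$, i.e.\ pivotal volume $2^{3k/4}$) so that the per-annulus bound is $O(k^{7/8}a_k)$ rather than the $O(k\,a_k)$ a cruder argument gives --- the exponent $7/8$ being the output of this optimization, and presumably not sharp.
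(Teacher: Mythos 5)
Your reduction (annulus circuits plus crossings, gluing by planarity, then controlling the per-annulus costs) and your static step (a stretched-exponential tail bound for the annulus passage time at scale $a_k$, obtained by counting the positive-weight vertices a cheap crossing is forced through) are essentially the paper's skeleton: this is the decomposition \eqref{eq: T(n)_def} and the tail bound of Theorem~\ref{thm: upper_tail_theorem}/Corollary~\ref{cor: T_n_tail_bound}, proved by a Kiss-type moment count in the style of \cite{DT19,K14}. The genuine gap is your dynamical step, and it is not a matter of missing details: the per-annulus uniform bound $\sup_{t\in[0,1]}Z_k(t)\leq C_1k^{7/8}a_k$ with summable failure probability that your Borel--Cantelli scheme requires is out of reach of any discretize-and-union-bound argument, because the static tail at level $u\,a_{ck}$ is only of size $\exp(-cu^{1/\eta})$ with $\eta=2-(\alpha-\epsilon)/2\approx 1.8$--$1.9$ (see \eqref{eq: eta_def}), so at $u\asymp k^{7/8}$ it is $\exp(-ck^{7/(8\eta)})$, stretched-exponential in $k$, while the time mesh must be finer than the resampling scale of the $\asymp 4^k$ vertices involved, forcing a union over $\gtrsim 16^k$ time points. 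The pivotal/four-arm machinery you invoke to count ``relevant'' resamplings along a random optimal crossing is exactly the part you would have to invent from scratch (quantitative stability of heavy-tailed passage times under resampling is much more delicate than the $\{T_t=0\}$ correlation bounds of \cite{GPS10,SS10}), and your heuristic for where $7/8$ comes from is not correct: in the paper the exponent has nothing to do with the pivotal volume $2^{2k}\pi_4(2^k)$ or with optimizing a mesh; it is the static exponent $\eta-1=1-(\alpha-\epsilon)/2<7/8$, where $\alpha=\min\{\alpha_2',1/3\}>1/4$ enters through the extra monochromatic two-arm and half-plane one-arm factors in the bound \eqref{eq: pi_4_hat_bound} on the number of contributing vertices.

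The paper's way around the obstruction is more elementary and is worth internalizing: a crude union bound over a mesh of $17^n$ times (Lemma~\ref{lem: poisson_lemma}) is affordable only for events whose static probability is exponentially small in $n$, so one never asks for a per-annulus bound at level $k^{7/8}a_k$. Instead one (i) truncates $\mathsf{T}_t(n)/F^{-1}(p_{\lceil 1.5^n\rceil})$ at the huge level $(4/3)^{n\eta}$, where the static tail is $\exp(-c(4/3)^n)\ll 18^{-n}$, so the truncation is valid for all $t\in[0,1]$ simultaneously; (ii) controls the \emph{partial sums} $\sum_{k\leq n}Y_{k,t}\leq C_t n^{\eta}$ uniformly in $t$, using that sums of independent stretched-exponential variables have large-deviation probability $\approx n\exp(-cn)$ at scale $n^{\eta}$ (Nagaev's theorem, Lemma~\ref{lem: xi_lemma}), again small enough for the $17^n$-point union bound; and (iii) converts the partial-sum bound into convergence of the weighted series via summation by parts (Lemma~\ref{lem: sum_by_parts}) against the nonincreasing weights $F^{-1}(p_{\lceil 1.5^k\rceil})\leq a_{\lceil ck\rceil}$, as in \eqref{eq: seriously_last}. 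Averaging across annuli in step (ii) is what buys the improvement from the $k^{\eta}a_k$ per-annulus bound a crude sup argument would give (worse than your claimed $k\,a_k$) down to an effective $k^{\eta-1}a_k$ per annulus, and hence the hypothesis $\sum_k k^{7/8}a_k<\infty$. Without these three ingredients your outline does not close.
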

	
	\noindent
	We will prove Theorem~\ref{thm: other_side} in Section~\ref{sec: other_side}.
	
	The proof of Theorem~\ref{thm: other_side} gives a slightly stronger result than what is listed above: there exists $\varsigma>0$ such that if $\sum k^{7/8-\varsigma}a_k < \infty$, then a.s.~there are no exceptional times. One simply needs to modify the second to last line of the proof (below \eqref{eq: seriously_last}) in a straightforward manner. 
	Also, if one uses the conjectured value $17/48$ for the monochromatic two-arm exponent $\alpha_2'$ listed above \eqref{eq: max_exponent_bound}, then $\alpha = \min\{\alpha_2',1/3\}$ becomes $1/3$. Inserting this value into the argument below \eqref{eq: seriously_last} produces the following result. If $\sum k^{5/6 + \varsigma} a_k < \infty$ for some $\varsigma>0$, then a.s.~there are no exceptional times.

	Most of our arguments go through for general lattices (and half planes or sectors with the triangular lattice), but at some key points, we use properties of percolation on the triangular lattice derivable from the description of its scaling limit. Versions of our main results hold for, say, first-passage percolation on the edges of the square lattice, but they are weaker. For example, the exponent $7/8$ in Theorem~\ref{thm: other_side} comes from the exact values $1/4$ and $1/3$ of the two-arm polychromatic exponent in the full plane and the one-arm exponent in the half plane. These produce the bound $\alpha > 1/4$. On the square lattice, where these values are not rigorously established, we can only use $\alpha>0$, and this results in the condition $\sum ka_k<\infty$ instead of $\sum k^{7/8} a_k < \infty$.

	\section{Preliminaries}
	
	In this section we record various definitions and preliminary results that we will reference later. A circuit is a path $(v_1, \dots, v_n)$ with $v_1 = v_n$. Any circuit defines a plane curve by connecting its vertices in sequence with line segments. If the circuit is vertex self-avoiding, then this curve is a Jordan curve and therefore splits $\mathbb{R}^2$ into two components. The bounded component is referred to as the circuit's interior. 
	
	Given $p \in [0,1]$, we say that a vertex $w$ is $p$-open in the configuration $(\omega_v)_{v \in \mathbb{Z}^2}$ if $\omega_w \leq p$; otherwise we say it is $p$-closed. Because the variables $\omega_v$ are i.i.d.~and have uniform $[0,1]$ distribution, the collection of $p$-open vertices has the same distribution as the collection of open vertices in Bernoulli site-percolation with parameter $p$. We say that a path or circuit is $p$-open if all its vertices are $p$-open, and $p$-closed if all its vertices are $p$-closed. A circuit $\mathcal{C}$ in an annulus $\text{Ann}(m,n) = B(n) \setminus B(m)$ for integers $n \geq m \geq 1$ is said to be the innermost $p$-open circuit around 0 in this annulus if $\mathcal{C}$ is $p$-open, the origin is in the interior of $\mathcal{C}$, and any other $p$-open circuit in this annulus containing the origin in its interior has interior which contains the interior of $\mathcal{C}$. It is a well-known and often-used fact that if there is a $p$-open circuit around 0 in $\text{Ann}(m,n)$, then there is a unique innermost one (similar to \cite[p.~317]{G99}).
	
	We will heavily use tools from critical and near-critical percolation. To describe these we consider the probability of box crossings. A path in $B(n)$ is a left-right crossing if it is contained in $B(n)$, begins on the left side $\{-n\} \times [-n,n]$, and ends on the right side $\{n\} \times [-n,n]$. It is known that $1/2$ is a critical value in the sense that $\mathbb{P}(B(n)\text{ contains a left-right }p\text{-open crossing})$ converges to 0 if $p < 1/2$ and to $1$ if $p > 1/2$, but remains bounded away from 0 and 1 if $p = 1/2$. The finite-size scaling correlation length is defined by letting $\epsilon>0$ and setting
	\[
	L(p,\epsilon) = \min\{ n : \mathbb{P}(B(n)\text{ contains a left-right }p\text{-open crossing}) > 1-\epsilon\} \text{ for } p > \frac{1}{2}
	\]
	and
	\[
	L(p,\epsilon) = \min\{n : \mathbb{P}(B(n)\text{ contains a left-right }p\text{-open crossing}) < \epsilon\} \text{ for } p < \frac{1}{2}.
	\]
	For a given $\epsilon$, one has $L(p,\epsilon) \to \infty$ as $p \to 1/2$ and it is proved \cite[Eq.~(1.24)]{K87} that for some $\epsilon_0$, one has $L(p,\epsilon_1) \asymp L(p,\epsilon_2)$ for any fixed $\epsilon_1,\epsilon_2 \in [0,\epsilon_0]$, as $p\to 1/2$. We then define 
	\[
	L(p) = L(p,\epsilon_0).
	\]
	As $p\to 1/2$, $L(p)\to\infty$, and in fact
	\begin{equation}\label{eq: correlation_scaling}
		L(p) = \bigg| p-\frac{1}{2} \bigg|^{-\frac{4}{3} + o(1)} \text{ as } p \to \frac{1}{2}.
	\end{equation}
	This follows from the scaling relation \cite[Eq.~(4.5)]{K87} and the exact value $5/4$ of the four-arm exponent (see \eqref{eq: arm_exponents_1_2_4}). With this definition of $L(p)$, one can use the Russo-Seymour-Welsh theorem to show that for positive integers $k,l,n$, and $p$ such that $L(p) \geq n$, one has
	\begin{equation}\label{eq: near_critical_crossing}
		1-\delta_{k,l} > \mathbb{P}([0,kn]\times [0,ln] \text{ contains a left-right }p\text{-open crossing}) > \delta_{k,l}
	\end{equation}
	for some positive constants $\delta_{k,l}$ depending only on $k$ and $l$.
	
	We also set
	\[
	p_n = \sup\{p : L(p) > n\} \text{ for } n \geq 1.
	\]
	Although there are two values of $p_n$, one above $1/2$, and one below $1/2$, the one we are using will be made clear in applications, and the ensuing remarks are valid for both. This $p_n$ is nearly an inverse for $L(p)$: as in \cite[Eq.~(2.10)]{J03}, there is $\Cl[smc]{c: c1} \in (0,1)$ such that for all $n \geq 1$,
	\begin{equation}\label{eq: approx_inverse}
		\Cr{c: c1}n \leq L(p_n) \leq n.
	\end{equation}
	Furthermore, from \eqref{eq: correlation_scaling}, one has
	\begin{equation}\label{eq: p_n_bounds}
		\left| p_n - \frac{1}{2} \right| = n^{-\frac{3}{4} + o(1)} \text{ as } n \to \infty.
	\end{equation}
	
	Many of our arguments involve arm events, which are percolation events defined by the existence of paths (``arms'') emanating from a fixed region of space. The one-arm probability is defined as 
	\[
	\pi_1(p;m,n) = \mathbb{P}\left( \exists~p\text{-open path crossing Ann}(m,n)\right)
	\]
	and we use the shorthand
	\[
	\pi_1(p;n) = \pi_1(p;0,n),\quad \pi_1(n) = \pi_1\left(\frac{1}{2};n\right),\quad \text{ and } \pi_1(m,n) = \pi_1\left(\frac{1}{2};m,n\right).
	\]
	We make similar definitions for multi-arm events. $\pi_2(n)$ is the probability that there are two disjoint paths connecting $0$ to $\partial B(n)$, one which is $1/2$-open, and one which is $1/2$-closed, and $\pi_4(m,n)$ is the probability that there are four disjoint paths crossing $\text{Ann}(m,n)$, two of which are $1/2$-open, and two of which are $1/2$-closed. Furthermore, the paths alternate (open, closed, open, closed), as we proceed around the annulus in a clockwise fashion. The critical exponents for these events are known: for
	\begin{equation}\label{eq: arm_exponents_1_2_4}
		\alpha_1 = \frac{5}{48},~\alpha_2 = \frac{1}{4},~\alpha_4 = \frac{5}{4},
	\end{equation}
	one has \cite{SW01}
	\[
	\pi_k(n) = n^{-\alpha_k + o(1)} \text{ as } n \to \infty,
	\]
	and, more generally \cite[Lem.~2.5]{BN21}, for any $\epsilon>0$, there exist $\Cl[smc]{c: c2},\Cl[lgc]{C: C1}>0$ such that for all $m,n$ with $n > m \geq 1$,
	\begin{equation}\label{eq: annulus_arm_probability}
		\Cr{c: c2}\left( \frac{m}{n}\right)^{\alpha_k+\epsilon} \leq \pi_k(m,n) \leq \Cr{C: C1}\left( \frac{m}{n}\right)^{\alpha_k - \epsilon}.
	\end{equation}
	Arm probabilities also satisfy a ``quasimultiplicativity'' property: for $k=1,2,4$, there exist $\Cl[smc]{c: c_quasi},\Cl[lgc]{C: C_quasi}$ such that for any $m,n,r$ satisfying $n > r > m \geq 0$,
	\begin{equation}\label{eq: quasimultiplicativity}
		\Cr{c: c_quasi} \pi_k(m,r) \pi_k(r,n)\leq\pi_k(m,n) \leq \Cr{C: C_quasi}\pi_k(m,r)\pi_k(r,n).
	\end{equation}
	Furthermore, arm event probabilities remain nearly constant as $p$ changes, so long as it is near critical. Specifically, from \cite[Thm.~27]{N08}, for $k=1,2,4$, there exist $\Cl[smc]{c: change_p},\Cl[lgc]{C: change_p}$ such that for any $m,n$ with $n > m \geq 0$ and any $p$ with $L(p) \geq n$,
	\begin{equation}\label{eq: near_critical_arm_event_comparability}
		\Cr{c: change_p}\pi_k(m,n) \leq \pi_k(p;m,n) \leq \Cr{C: change_p}\pi_k(m,n).
	\end{equation}
	
	Measurability of all exceptional sets and their dimensions follows from routine arguments; see \cite[p.~504]{HPS97} for an analogous discussion in the context of dynamical percolation.
	
	We close this section with an elementary lemma that will be used in the proofs of Theorems~\ref{thm: Hausdorff} and~\ref{thm: other_side}.
	\begin{Lemma}\label{lem: sum_by_parts}
		Suppose that $(\mathsf{a}_n),(\mathsf{b}_n)$, and $(\mathsf{c}_n)$ are nonnegative sequences such that $(\mathsf{b}_n)$ is nonincreasing. Writing $\mathsf{A}_n = \sum_{k=1}^n \mathsf{a}_k$ for $n \geq 1$ (and similarly for $\mathsf{C}_n$), suppose that $\mathsf{A}_n \leq \mathsf{C}_n$ for all $n$. Then
		\[
		\sum_{k=1}^n \mathsf{a}_k\mathsf{b}_k \leq \sum_{k=1}^n\mathsf{c}_k\mathsf{b}_k \text{ for all } n \geq 1.
		\]
	\end{Lemma}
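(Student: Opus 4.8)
The plan is to apply Abel summation (summation by parts) to rewrite each of the two sums in terms of the partial sums $\mathsf{A}_k$ and $\mathsf{C}_k$, and then use the hypotheses $\mathsf{A}_k \leq \mathsf{C}_k$ together with the monotonicity of $(\mathsf{b}_n)$ to compare them term by term. Concretely, fix $n \geq 1$ and set $\mathsf{A}_0 = 0$, so that $\mathsf{a}_k = \mathsf{A}_k - \mathsf{A}_{k-1}$ for $1 \leq k \leq n$. Then
\[
\sum_{k=1}^n \mathsf{a}_k \mathsf{b}_k = \sum_{k=1}^n (\mathsf{A}_k - \mathsf{A}_{k-1})\mathsf{b}_k = \sum_{k=1}^{n-1} \mathsf{A}_k (\mathsf{b}_k - \mathsf{b}_{k+1}) + \mathsf{A}_n \mathsf{b}_n,
\]
where the last equality is a standard rearrangement of the telescoping sum. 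The same identity holds with $\mathsf{a}_k, \mathsf{A}_k$ replaced by $\mathsf{c}_k, \mathsf{C}_k$.

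Now I would compare the two resulting expressions coefficient by coefficient. Since $(\mathsf{b}_n)$ is nonincreasing, each factor $\mathsf{b}_k - \mathsf{b}_{k+1}$ is nonnegative, and since $(\mathsf{b}_n)$ is nonnegative, $\mathsf{b}_n \geq 0$ as well. Combining these with the hypothesis $\mathsf{A}_k \leq \mathsf{C}_k$ for every $k$ gives $\mathsf{A}_k(\mathsf{b}_k - \mathsf{b}_{k+1}) \leq \mathsf{C}_k(\mathsf{b}_k - \mathsf{b}_{k+1})$ for $1 \leq k \leq n-1$ and $\mathsf{A}_n \mathsf{b}_n \leq \mathsf{C}_n \mathsf{b}_n$. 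Summing these inequalities and reversing the Abel summation yields $\sum_{k=1}^n \mathsf{a}_k \mathsf{b}_k \leq \sum_{k=1}^n \mathsf{c}_k \mathsf{b}_k$, which is the claim; since $n$ was arbitrary, this holds for all $n \geq 1$.

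There is essentially no obstacle here: the only points requiring care are bookkeeping ones, namely the convention $\mathsf{A}_0 = \mathsf{C}_0 = 0$ and checking the boundary term $\mathsf{A}_n \mathsf{b}_n$ in the Abel summation, and making sure that all the factors multiplying $\mathsf{A}_k$ and $\mathsf{C}_k$ in the rewritten sums are nonnegative so that the inequality $\mathsf{A}_k \leq \mathsf{C}_k$ can be used in the correct direction. Nonnegativity of $(\mathsf{a}_n)$ and $(\mathsf{c}_n)$ is not actually needed for the argument, only that of $(\mathsf{b}_n)$ and its monotonicity, but it is harmless to keep it in the statement.
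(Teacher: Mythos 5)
Your proposal is correct and is essentially identical to the paper's proof: both apply Abel summation to write $\sum_{k=1}^n \mathsf{a}_k\mathsf{b}_k = \mathsf{A}_n\mathsf{b}_n + \sum_{k=1}^{n-1}\mathsf{A}_k(\mathsf{b}_k-\mathsf{b}_{k+1})$ and then compare termwise with the corresponding expression for $(\mathsf{c}_k)$, using $\mathsf{A}_k \leq \mathsf{C}_k$ and the nonnegativity of $\mathsf{b}_n$ and of the increments $\mathsf{b}_k - \mathsf{b}_{k+1}$. Your added remark that nonnegativity of $(\mathsf{a}_n)$ and $(\mathsf{c}_n)$ is not needed is accurate but immaterial.
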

	\begin{proof}
		Because $\mathsf{A}_k(\mathsf{b}_k-\mathsf{b}_{k+1}) \leq \mathsf{C}_k(\mathsf{b}_k-\mathsf{b}_{k+1})$, we can apply summation by parts for
		\[
		\sum_{k=1}^n \mathsf{a}_k\mathsf{b}_k = \mathsf{A}_n\mathsf{b}_n + \sum_{k=1}^{n-1} \mathsf{A}_k(\mathsf{b}_k-\mathsf{b}_{k+1}) \leq \mathsf{C}_n\mathsf{b}_n + \sum_{k=1}^{n-1}\mathsf{C}_k(\mathsf{b}_k-\mathsf{b}_{k+1}) = \sum_{k=1}^n \mathsf{c}_k\mathsf{b}_k.
		\]
	\end{proof}

	\section{Proofs}
	
	\subsection{Proof of Theorem~\ref{thm: other_side}}\label{sec: other_side}
	
	In this section, we assume \eqref{eq: critical_def} and \eqref{eq: finite_sum}. By time-stationarity, we can show that a.s., there are no times $t \in [0,1]$ for which $\rho_t < \infty$. To do this, we define a sequence $(\mathsf{T}(n))$ of random variables with the property that
	\begin{equation}\label{eq: t_max_implication}
		\sum_{n=1}^\infty \mathsf{T}(n) < \infty \Rightarrow \rho < \infty.
	\end{equation}
	The tails of the distributions of the $\mathsf{T}(n)$'s are well-controlled, so we can give sufficient conditions for their sum to be finite for all times $t$. 
	
	For $n \geq 1$, define
	\[
	\mathsf{T}^{(1)}(n) = \min\left\{ \mathsf{T}(\Gamma) : \Gamma \text{ is a circuit around }0 \text{ in } B(2^{n+1}) \setminus B(2^n)\right\}
	\]
	and
	\[
	\mathsf{T}^{(2)}(n) = \min\left\{ \mathsf{T}(\Gamma) : \Gamma \text{ is a path that connects }B(2^n) \text{ to } \partial B(2^{n+2}) \text{ in } B(2^{n+2}) \setminus B(2^n)\right\}.
	\]
	Next, put
	\begin{equation}\label{eq: T(n)_def}
		\mathsf{T}(n) = \mathsf{T}^{(1)}(n) + \mathsf{T}^{(2)}(n).
	\end{equation}
	As usual, we add a subscript $t$ to these variables when they are evaluated in the configuration $(\tau_v(t))$. To see that \eqref{eq: t_max_implication} holds, suppose that $\sum_{n=1}^\infty \mathsf{T}(n) < \infty$. Then for each $n$, choose $\Gamma^{(1)}(n)$ and $\Gamma^{(2)}(n)$ to be minimizing for the definitions of $\mathsf{T}^{(1)}(n)$ and $\mathsf{T}^{(2)}(n)$. By planarity, the union $\cup_{n \geq 1, i=1,2} \Gamma^{(i)}(n)$ is an infinite, connected set of vertices with finite total passage time. We can then choose an infinite self-avoiding path starting at 0 that is contained in this union except for finitely many vertices. This path has finite passage time and so by \eqref{eq: rho_equivalent}, this shows \eqref{eq: t_max_implication}.
	
	The rest of the proof will be split into two sections. In the first, we give a tail bound for the $\mathsf{T}(n)$'s, and in the second, we use this bound to argue that if $\sum k^{7/8} a_k < \infty$, then a.s., for all $t \in [0,1]$, one has $\sum_n \mathsf{T}_t(n)<\infty$, and therefore by \eqref{eq: t_max_implication}, there are no exceptional times.

	\subsubsection{Tail bound for $\mathsf{T}(n)$}
	
	To estimate the probability that $\mathsf{T}(n)$ is large, we need to use more arm exponents than just those associated to 1, 2, and 4-arm events. For $m \leq n$,
	\begin{enumerate}
		\item $\pi_1^H(m,n)$ (one-arm half-plane probability) is the probability that there is a $1/2$-open path connecting $B(m)$ to $\partial B(n)$, but with all its vertices $v$ satisfying $v \cdot e_2 \geq 0$, and
		\item $\rho_2(m,n)$ (monochromatic two-arm probability) is the probability that there are two disjoint $1/2$-open paths connecting $B(m)$ to $\partial B(n)$ (without any second-coordinate restriction).
	\end{enumerate}
	Regarding these arm probabilities, we need the following facts:
	\begin{itemize}
		\item The half-plane one-arm exponent is $1/3$; see \cite[Thm.~22]{N08}.
		\item The monochromatic two-arm exponent $\alpha_2'$ satisfies $\alpha_2' \in (1/4, 2/3)$ \cite[Thms.~2 and 5]{BN11}. In fact, numerical evidence suggests $\alpha_2'=17/48$, but there is no rigorous proof --- see \cite{BN09}.
		\item Let 
		$\alpha = \min\{\alpha_2',1/3\}$. Similarly to \eqref{eq: annulus_arm_probability},
		for any $\epsilon>0$, there exists $\Cl[lgc]{C: max_arm}>0$ such that for all $m_1 \leq m_2$, 
		\begin{equation}\label{eq: max_exponent_bound}
			\max\{\pi_1^H(m_1,m_2), \rho_2(m_1,m_2)\} \leq \Cr{C: max_arm} \left(\frac{m_1}{m_2} \right)^{\alpha-\epsilon}.
		\end{equation}
	\end{itemize}
	
	%
	%
	
	Our tail inequality for $\mathsf{T}(n)$ will follow from a similar tail inequality for a rectangle passage time, after applying a straightforward gluing argument. Let $T(n)$ be the minimal passage time among all paths which remain in $R(n) = [-2^{n+1},2^{n+1}] \times [-2^n,2^n]$ and connect the left side of $R(n)$ to the right side of $R(n)$.
	\begin{Thm}\label{thm: upper_tail_theorem}
		Let $\epsilon \in (0,\alpha)$. There exist $\Cl[smc]{c: main_tail},\Cl[lgc]{C: main_tail}> 0$ such that the following holds. For all $n \geq 1$ and $p > 1/2$ with $L(p) \leq 2^n$,
		\begin{align*}
			\mathbb{P}\left( T(n) \geq \lambda F^{-1}(p) \left( \frac{2^n}{L(p)}\right)^{2-\alpha+\epsilon}\right) &\leq \exp\left( - \Cr{c: main_tail} \frac{2^n}{L(p)}\right) \\
			&+ \begin{cases}
				\exp\left( -\Cr{c: main_tail} \lambda^{\frac{2}{\alpha-\epsilon}}\right) &\quad \text{if } \lambda \leq \Cr{C: main_tail} \left( \frac{2^n}{L(p)}\right)^{\alpha-\epsilon} \\
				\exp\left( - \Cr{c: main_tail} \left( \frac{2^n}{L(p)}\right)^{2-\alpha+\epsilon} \lambda\right) &\quad \text{if } \lambda \geq \Cr{C: main_tail} \left( \frac{2^n}{L(p)}\right)^{\alpha-\epsilon}.
			\end{cases}
		\end{align*}
	\end{Thm}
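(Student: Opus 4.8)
The plan is to exhibit a near‑optimal left--right crossing of $R(n)$ at level $p$ and bound its passage time. Write $M = 2^n/L(p)$. Since $F^{-1}(\omega)=0$ for $\omega\le 1/2$ and $F^{-1}(\omega)\le F^{-1}(p)$ for $\omega\le p$, the passage time of any $p$-open path $\gamma$ is at most $F^{-1}(p)$ times the number of its vertices $v$ with $\omega_v\in(1/2,p]$ (the \emph{costly} vertices of $\gamma$). It therefore suffices to bound the upper tail of
\[
\mathcal N \;=\; \min\bigl\{\,\#\{v\in\gamma:\omega_v\in(1/2,p]\}\ :\ \gamma\text{ a $p$-open left--right crossing of }R(n)\,\bigr\},
\]
with $\mathcal N=+\infty$ when no such $\gamma$ exists. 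The first exponential term in the bound will absorb $\{\mathcal N=+\infty\}$: since $L(p)\le 2^n$, the rectangle $R(n)$ lies above the correlation length, where $p$-percolation is supercritical, so a standard renormalization argument --- tile $R(n)$ by boxes of side a large constant multiple of $L(p)$, call such a box good when it carries a $p$-open crossing network, use the near-critical crossing estimate \eqref{eq: near_critical_crossing} together with the supercritical density of good boxes, and apply a block/contour argument --- gives $\mathbb{P}(\mathcal N=+\infty)\le \exp(-cM)$ and, on the complementary event, produces a concrete $p$-open crossing routed through good boxes.

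It remains to estimate $\mathbb{P}(\mathcal N\ge \lambda M^{2-\alpha+\epsilon})$. I would run a multiscale construction over the dyadic scales between $L(p)$ and $2^n$ (equivalently, on the renormalized lattice of good boxes, which has side $\asymp M$): within each good box select a $p$-open crossing minimizing the number of costly vertices, and bound the total number of costly vertices along the chained crossing by a sum over the boxes that the crossing is genuinely \emph{forced} to traverse expensively, i.e.\ boxes containing a $1/2$-closed obstruction of the crossing that cannot be detoured for free. The density of forced boxes is governed by arm events reaching the boundary of $R(n)$ --- a half-plane one-arm near the top or bottom of $R(n)$ and a monochromatic two-arm in the bulk --- so by quasimultiplicativity \eqref{eq: quasimultiplicativity}, near-critical arm comparability \eqref{eq: near_critical_arm_event_comparability}, and the exponent bound \eqref{eq: max_exponent_bound} with $\alpha=\min\{\alpha_2',1/3\}$, a given box is forced with probability at most $C\,M^{-\alpha+\epsilon}$; summing over the $\asymp M^2$ boxes gives typical total cost $\asymp F^{-1}(p)\,M^{2-\alpha+\epsilon}$, where the slack $\epsilon$ is exactly the one in \eqref{eq: annulus_arm_probability} and \eqref{eq: max_exponent_bound}.

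For the tail I would split at $\lambda\asymp M^{\alpha-\epsilon}$, the value at which $\lambda M^{2-\alpha+\epsilon}$ reaches the total number $\asymp M^2$ of boxes and at which both claimed bounds equal $\exp(-cM^2)$. For $\lambda$ below this value, the count of forced boxes is a sum of roughly independent contributions across the $\asymp M$ scales (or strips) of $R(n)$, each with a stretched-exponential tail coming from the arm estimates above, and a Bernstein-type summation yields $\exp(-c\,\lambda^{2/(\alpha-\epsilon)})$; for $\lambda$ above this value, forcing cost $\ge \lambda F^{-1}(p)M^{2-\alpha+\epsilon}$ requires a macroscopic $1/2$-closed barrier of total strength $\asymp \lambda M^{2-\alpha+\epsilon}$, and a union bound over its $\exp(O(M))$ possible locations, each of probability at most a constant to the power of its strength, gives $\exp(-c\,M^{2-\alpha+\epsilon}\lambda)$. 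Adding these to the $\exp(-cM)$ term from the first paragraph and renaming constants yields the statement (after enlarging $\epsilon$ slightly, which $\epsilon\in(0,\alpha)$ permits).

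I expect the main obstacle to be precisely the arm-event bookkeeping in the last two paragraphs: cleanly separating the below-$L(p)$ (critical-like) contributions from the above-$L(p)$ (supercritical) ones, handling the boundary of $R(n)$ through half-plane arms rather than full-plane arms, and --- the most delicate point --- making the concentration estimate uniform in $\lambda$, $n$, and $p$ so that the two regimes of the $\lambda$-tail genuinely match at $\lambda\asymp M^{\alpha-\epsilon}$.
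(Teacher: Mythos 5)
Your setup matches the paper's: the first exponential term comes from the existence of a $p$-open crossing (RSW, as in \eqref{eq: jarai_closed_bound}), on that event $T(n)$ is at most $F^{-1}(p)$ times the number of vertices with $\omega_v\in(1/2,p]$ on an optimal $p$-open crossing, and you correctly identify that the gain over the trivial $(2^n/L(p))^2$ count comes from the half-plane one-arm and monochromatic two-arm exponents, i.e.\ $\alpha=\min\{\alpha_2',1/3\}$ --- exactly how the paper obtains the extra factor $\left(s_1/s_3\right)^{\alpha-\epsilon}$ in \eqref{eq: pi_4_hat_bound}. The genuine gap is that the tail estimate itself, which is the actual content of the theorem, is not proved. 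Counting \emph{forced $L(p)$-boxes} is not the same as counting costly \emph{vertices}: a single forced box can carry many crossing vertices with $\omega_v\in(1/2,p]$, and the number of such vertices in an $L(p)$-box has heavy-tailed fluctuations governed by four-arm events at all scales below $L(p)$; your sketch never converts box counts into vertex counts, and it is precisely this clustering (below $L(p)$, and of forced boxes above $L(p)$) that drives the upper tail.

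Moreover, the two mechanisms you propose for the two $\lambda$-regimes do not hold up. In the small-$\lambda$ regime the deviation is dominated by two-dimensional clustering of the $k$ relevant vertices in a region of diameter about $2^n/\sqrt{k}$ --- this is where the exponent $2/(\alpha-\epsilon)$ comes from --- not by a Bernstein-type sum over $\asymp 2^n/L(p)$ roughly independent strips, and you give no estimate from which your claimed per-strip stretched-exponential tail, let alone the final exponent, would follow. In the large-$\lambda$ regime, a union bound over ``$\exp(O(2^n/L(p)))$ possible locations'' of a closed barrier of strength $\asymp\lambda(2^n/L(p))^{2-\alpha+\epsilon}$ badly undercounts the entropy: the number of such configurations grows exponentially in the strength, not in $2^n/L(p)$, so ``a constant to the power of its strength'' does not beat the entropy without a genuine balancing argument. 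The paper resolves both issues at once by a moment method: every costly vertex of the optimal crossing satisfies the four-arm-type event $A_n(p,v)$ (two $p$-open arms to distance $L(p)$ and two $1/2$-closed arms to the top and bottom of $S(n)$), the $k$-th moments of $\#V_n(p)$ are bounded by the Kiss-type multiscale counting \eqref{eq: V_n_prob_bound}--\eqref{eq: N_n_moments}, in which quasimultiplicativity and the $(p-\tfrac12)$ factor per vertex balance the placement entropy, and optimizing $k$ in Markov's inequality produces exactly the two regimes of \eqref{eq: to_show_N_n} with crossover at $\lambda\asymp(2^n/L(p))^{\alpha-\epsilon}$. Without an argument of this type (or some substitute concentration estimate uniform in $n$, $p$, $\lambda$), your proposal establishes only the first-moment heuristic, not the stated tail bounds.
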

	\begin{Rem}
		When $\lambda \geq \Cr{C: main_tail}\left(2^n/L(p)\right)^{\alpha-\epsilon}$, the term $\exp(-\Cr{c: main_tail} (2^n/L(p))^{2-\alpha+\epsilon}\lambda)$ is less than $\exp(-\Cr{c: main_tail}(2^n/L(p)))$. We write the above form of the inequality because the second summand is actually an upper bound for the probability $\mathbb{P}\left(\#V_n(p) \geq \lambda (2^n/L(p))^{2-\alpha+\epsilon}\right)$; see \eqref{eq: to_show_N_n}.
	\end{Rem}
	\begin{proof}
		For the proof, we follow \cite[Prop.~1]{DT19}, but with improvements using ideas from the combinatorial argument of Kiss \cite{K14}. 
		Let $S(n) = [-2^{n+2},2^{n+2}] \times [-2^n,2^n]$ and for $p>1/2$ and $n \geq 1$, define $E_n(p)$ as the event that there exists a $p$-open path in $S(n)$ connecting the left side of $S(n)$ to the right side. By the RSW theorem and \cite[Eq.~(2.8)]{J03}, we have, for some $\Cl[smc]{c: Jarai}>0$,
		\begin{equation}\label{eq: jarai_closed_bound}
			\mathbb{P}(E_n(p)^c) \leq \exp\left( - \Cr{c: Jarai} \frac{2^n}{L(p)}\right) \text{ for } n \geq 1, p>\frac{1}{2}.
		\end{equation}
		
		On the event $E_n(p)$, we let $T_p(n)$ be the minimal passage time among all paths in $S(n)$ that are $p$-open and connect the left side of $S(n)$ to the right side. Because we are interested in crossings of $R(n)$ only, we set
		\[
		\hat{T}_p(n) = \max\left\{ 
		\sum_{v \in \gamma \cap R(n)} \tau_v : 
		\begin{array}{c}
			\gamma \text{ is a }p\text{-open path in }S(n), \text{ it connects the left side}  \\
			\text{of }S(n) \text{ to the right side, and }T(\gamma) = T_p(n)
		\end{array}\right\}.
		\]
		By definition,
		\begin{equation}\label{eq: trivial_T_bound}
			T(n)\mathbf{1}_{E_n(p)} \leq \hat{T}_p(n)\mathbf{1}_{E_n(p)}.
		\end{equation}
		
		Vertices whose weights contribute to $\hat{T}_p(n)$ must satisfy certain conditions like those in arm events. For $v \in R(n)$, let $A_n(p,v)$ be the event that all of the following occur:
		\begin{enumerate}
			\item $\omega_v \in (1/2,p]$,
			\item there exist two disjoint $p$-open paths in $S(n)$ from $v$ to $\partial B(v,L(p))$, the boundary of the box $v + [-L(p),L(p)]^2$, and
			\item there exist two disjoint $1/2$-closed paths from $v$ to $\partial S(n)$, one touching the top side of $S(n)$ and one touching the bottom.
		\end{enumerate}
		Also, let
		\[
		V_n(p) = \{ v \in R(n) : A_n(p,v) \text{ occurs}\}.
		\]
		Exactly the same argument as in \cite[Lem.~1]{DT19} gives that for all $n \geq 1$ and $p > 1/2$ with $L(p) \leq 2^n$,
		\[
		\hat{T}_p(n)\mathbf{1}_{E_n(p)} \leq F^{-1}(p) \#V_n(p) \mathbf{1}_{E_n(p)}.
		\]
		By this inequality, \eqref{eq: trivial_T_bound}, and \eqref{eq: jarai_closed_bound}, we obtain for $\lambda \geq 0$, and $n,p$ as above,
		\begin{align*}
			\mathbb{P}\left( T(n) \geq \lambda F^{-1}(p) \left( \frac{2^n}{L(p)}\right)^{2-\alpha+\epsilon}\right) &\leq \mathbb{P}\left( T(n)\mathbf{1}_{E_n(p)} \geq  \lambda F^{-1}(p) \left( \frac{2^n}{L(p)}\right)^{2-\alpha+\epsilon}\mathbf{1}_{E_n(p)}\right) \\
			&+ \mathbb{P}(E_n(p)^c) \\
			&\leq \mathbb{P}\left( \#V_n(p) \geq \lambda \left( \frac{2^n}{L(p)}\right)^{2-\alpha+\epsilon} \right) \\
			&+ \exp\left( - \Cr{c: Jarai} \frac{2^n}{L(p)}\right).
		\end{align*}
		Comparing this with the statement of the theorem, we are therefore reduced to showing that for $n \geq 1$ and $p>1/2$ with $L(p) \leq 2^n$,
		\begin{equation}\label{eq: to_show_N_n}
			\mathbb{P}\left( \#V_n(p) \geq \lambda \left( \frac{2^n}{L(p)}\right)^{2-\alpha+\epsilon} \right) \leq \begin{cases}
				\exp\left( -\Cr{c: main_tail} \lambda^{\frac{2}{\alpha-\epsilon}}\right) &\quad \text{if } \lambda \leq \Cr{C: main_tail} \left( \frac{2^n}{L(p)}\right)^{\alpha-\epsilon} \\
				\exp\left( - \Cr{c: main_tail} \left( \frac{2^n}{L(p)}\right)^{2-\alpha+\epsilon} \lambda\right) &\quad \text{if } \lambda \geq \Cr{C: main_tail} \left( \frac{2^n}{L(p)}\right)^{\alpha-\epsilon}.
			\end{cases}
		\end{equation}
		
		To prove \eqref{eq: to_show_N_n}, we will estimate the moments of $\#V_n(p)$. The argument of \cite[Lem.~3]{DT19} gives the upper bound $\mathbb{E}\#V_n(p)^k\mathbf{1}_{\{\#V_n(p) \geq k\}} \leq \left( \Cl[lgc]{C: new_main_tail} k (2^n/L(p))^2\right)^k$ for integer $k$, but we will improve this to
		\begin{equation}\label{eq: N_n_moments}
			\mathbb{E}\#V_n(p)^k\mathbf{1}_{\{\#V_n(p) \geq k\}} \leq \begin{cases}
				\left( \Cr{C: new_main_tail} k^{\frac{\alpha-\epsilon}{2}}\left( \frac{2^n}{L(p)}\right)^{2-\alpha+\epsilon}  \right)^k & \quad\text{for } k \leq \left( \frac{2^n}{L(p)}\right)^2 \\
				\left( \Cr{C: new_main_tail} k^{\frac{\frac{5}{4} +\epsilon}{2}} \left( \frac{2^n}{L(p)}\right)^{2- \frac{5}{4} - \epsilon} \right)^k &\quad \text{for } k \geq \left( \frac{2^n}{L(p)}\right)^2
			\end{cases}
		\end{equation}
		for $n \geq 1$ and $p > 1/2$ with $L(p) \leq 2^n$. We do this by a rather involved counting argument, with many parts similar to the proof of \cite[Thm.~1.4]{K14}. Let $k \geq 1$ be an integer and write
		\begin{equation}\label{eq: moment_to_binomial}
			\mathbb{E}\#V_n(p)^k \mathbf{1}_{\{\#V_n(p) \geq k\}} \leq k^k\mathbb{E}\binom{\#V_n(p)}{k} \mathbf{1}_{\{\#V_n(p) \geq k\}} = k^k\sum_{\stackrel{V \subseteq R(n)}{\#V=k}} \mathbb{P}(V_n(p) \supseteq V).
		\end{equation}
		
		The main task in the proof of \eqref{eq: N_n_moments} is therefore to bound $\mathbb{P}(V_n(p) \supseteq V)$ in the right side of \eqref{eq: moment_to_binomial}. To do this, we fix $V = \{v_1, \dots, v_k\}$ and introduce a growing sequence of graphs $(G_i)$. Let $G_0$ be the graph $(V,\emptyset)$; that is, it has vertex set $V$ and empty edge set. We start growing an $\ell^\infty$-box at each point of $V$ at unit speed; at time $r$, we have the boxes $B(v,r)$, $v \in V$. We will stop at time $r=2^{n+1}$, at which point all boxes touch.
		
		As $r$ increases, the boxes intersect each other. Let $r_1$ be the smallest $r$ when the first pair of boxes touch. Pick one such pair of boxes in some deterministic way with centers $u_1,v_1$. We draw an edge $e_1$ between $u_1$ and $v_1$ and label it with $\ell(e_1) = r_1$, and obtain the graph $G_1$. Note that $\|u_1-v_1\|_\infty = 2r_1$. We then construct the graphs $G_i,~i=1, \dots, k-1$ inductively: once $G_i$ is constructed, we continue the growth process, and stop at time $r_{i+1}\geq r_i$ if we find a pair of vertices $u_{i+1},v_{i+1}$ with the label $\ell(e_{i+1}) = r_{i+1}$. The $r_i$'s as constructed are elements of the set $\{1/2, 1, 3/2, \dots, 2^{n+1}\}$. Also, Observation~2.1 of \cite{K14} gives an important property of the numbers $r_1, \dots, r_{k-1}$: they satisfy $r_{k-j} \leq \frac{2^{n+1}}{\lfloor \sqrt{ j }\rfloor} \text{ for } j = 1, \dots, k-1$. This implies that
		\begin{equation}\label{eq: kiss_r_constraint}
			r_{k-j} \leq 2^{n+1-\ell} \text{ for all }j \text{ with }4^\ell \leq j < 4^{\ell+1}.
		\end{equation}
		
		For $r \leq 2^{n+1}$ and $v \in R(n)$, let $\hat{\pi}_4(p;v,r)$ be the probability that the following hold:
		\begin{enumerate}
			\item $v$ is connected inside $S(n)$ to $\partial B(v,s_1)$ by two disjoint $p$-open paths, where
			\[
			s_1 = \min\{L(p),r\}.
			\]
			\item $v$ is connected inside $S(n)$ to $\partial B(v,s_2)$ by a $1/2$-closed path, where
			\[
			s_2 = \min\{\text{dist}(v,\partial S(n)), r\},
			\]
			and $\text{dist}$ is the $\ell^\infty$-distance.
			\item $v$ is connected inside $S(n)$ to $\partial B(v,s_3)$ by a $1/2$-closed path, where
			\[
			s_3 = \min\{2^n,r\}.
			\]
			\item The paths referenced in items 1-3 are alternating: open, closed, open, closed, as we proceed in a clockwise fashion around $v$.
		\end{enumerate}
	
		\begin{figure}
		\centering
		\includegraphics[width=0.8\linewidth]{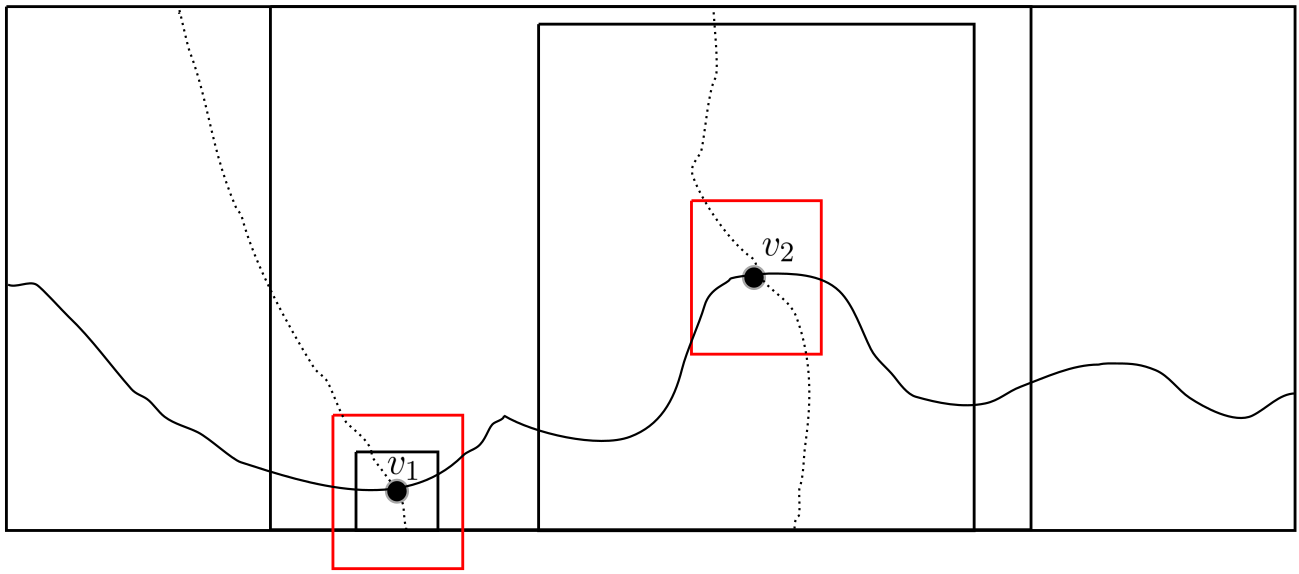}
		\caption{Depiction of the argument in the proof of Theorem \ref{thm: upper_tail_theorem}. The solid horizontal crossing of the rectangle is a $p$-open path. The dotted vertical crossings of the rectangle are $1/2$-closed paths. The largest rectangle is $S(n)$ and the second largest is $R(n)$. The red boxes have sidelength $L(p)$. The vertex $v_1$ illustrates the case when $s_2 < s_1$. The vertex $v_2$ illustrates the case when $s_1 < s_2$.}
		\label{fig: upper_tail_theorem}
	\end{figure}

		This is the same definition as that of $\hat{\pi}_4$ in \cite[p.~121]{DT19}. So, just as in Eq.~(14) there, if we put $\hat{\pi}_4(p,r) := \max_{v \in R(n)} \hat{\pi}_4(p;v,r)$, then there exists $\Cl[lgc]{C: kiss}$ such that for all integers $k,n\geq 1$ and $p > 1/2$ with $L(p) \leq 2^n$,
		\begin{equation}\label{eq: V_n_prob_bound}
			\mathbb{P}(V_n(p) \supseteq V) \leq \Cr{C: kiss} \left( p-\frac{1}{2}\right)^k \hat{\pi}_4(p,2^n) \prod_{r \in \mathcal{R}(V)} \left( \Cr{C: kiss} \hat{\pi}_4(p,r)\right).
		\end{equation}
		Here, $\mathcal{R}(V)$ is the multiset containing $r_1, \dots, r_{k-1}$ (that is, it is a set whose elements can be repeated). The argument that shows \eqref{eq: V_n_prob_bound} uses only that our probabilities $\hat{\pi}_4$ have a quasi-multiplicative property.
		
		We claim that there is $\Cl[lgc]{C: kiss_2}>0$ such that
		\begin{equation}\label{eq: pi_4_hat_bound}
			\hat{\pi}_4(p;r) \leq \Cr{C: kiss_2} \pi_4(s_1) \left(\frac{s_1}{s_3}\right)^{\alpha-\epsilon}
		\end{equation}
		for all $n \geq 1$, $r \leq 2^{n+1}$, and $p>1/2$ satisfying $L(p) \leq 2^n$. Here, $\pi_4(s_1)$ is the usual four-arm probability.
		The proof of \eqref{eq: pi_4_hat_bound} is almost the same as that of \cite[Eq.~(15)]{DT19} except there the $s_1/s_3$ term does not appear. We will follow along that proof with some details omitted. First observe that since $L(p) \leq 2^n$, and so both $s_1$ and $s_2$ are $\leq s_3$, the event defining $\hat{\pi}_4(p;v,r)$ for $v \in R(n)$ implies that $v$ is connected to distance $s_1$ by two disjoint $p$-open paths and one disjoint $1/2$-closed path, and that $v$ is connected to distance $\min\{s_1,s_2\}$ by another disjoint $1/2$-closed path, all in such a way that the paths alternate (see Figure \ref{fig: upper_tail_theorem}). Furthermore, $\partial B(v,\min\{s_1,s_2\})$ is connected to $\partial B(v,s_2)$ by two disjoint $1/2$-closed paths, and $\partial B(v,\max\{s_1,s_2\})$ is connected to $\partial B(v,s_3)$ by a $1/2$-closed path. By independence, then, we have
		\begin{equation}\label{eq: first_level_split}
			\begin{split}
				\hat{\pi}_4(p;v,r) &\leq \pi_4'(p,\min\{s_1,s_2\}) \pi_3^H(p,\min\{s_1,s_2\},s_1) \\
				&\times \rho_2(\min\{s_1,s_2\},s_2) \pi_1^H(\max\{s_1,s_2\},s_3).
			\end{split}
		\end{equation}
		Here, $\pi_4'(p,m)$ is the probability that $0$ is connected by two disjoint $p$-open paths to distance $m$ and by two disjoint $1/2$-closed paths to distance $m$ (alternating), and $\pi_3^H(p,m_1,m_2)$ is the probability that $\partial B(m_1)$ is connected to $\partial B(m_2)$ in the upper half-plane by two disjoint $p$-open paths and a $1/2$-closed path (alternating). By \cite[Lem.~6.3]{DSV09}, there exists $\Cl[lgc]{C: DSV}$ such that for all $n \geq 1$, $r \leq 2^{n+1}$, and $p>1/2$ with $L(p) \leq 2^n$,
		\[
		\pi_4'(p,\min\{s_1,s_2\}) \leq \Cr{C: DSV} \pi_4(\min\{s_1,s_2\}).
		\]
		A similar argument as in \cite[Lem.~6.3]{DSV09} also holds for half-plane three-arm (annulus) events, and we find
		\[
		\pi_3^H(p, \min\{s_1,s_2\},s_1) \leq \C[lgc] \pi_3^H(\min\{s_1,s_2\},s_1),
		\]
		where $\pi_3^H(m_1,m_2)$ is the probability that $\partial B(m_1)$ is connected by two disjoint $1/2$-open paths and a $1/2$-closed path to $\partial B(m_2)$ in such a way that all vertices $w$ on these paths satisfy $w \cdot e_2 \geq 0$. Just as in \cite[p.~112]{DT19}, we also have $\pi_3^H(m_1,m_2) \leq \C[lgc](m_1/m_2)^2$ and $\pi_4(m_1,m_2) \geq \C[smc](m_1/m_2)^2$, so using quasimultiplicativity of the four-arm probability and the two inequalities listed above, 
		for some $\Cl[lgc]{C: split}>0$, we obtain from \eqref{eq: first_level_split} 
		\[
		\hat{\pi}_4(p;v,r) \leq \Cr{C: split}\pi_4(s_1) \rho_2(\min\{s_1,s_2\},s_2) \pi_1^H(\max\{s_1,s_2\},s_3)
		\]
		for all $n \geq 1$, $r \leq 2^{n+1}$, $p>1/2$ with $L(p) \leq 2^n$, and $v \in R(n)$. This is the same as \cite[Eq.~(15)]{DT19}, except we retain the extra two factors on the right. To finish, we simply bound them using \eqref{eq: max_exponent_bound} to obtain \eqref{eq: pi_4_hat_bound}.
		
		Having established \eqref{eq: pi_4_hat_bound}, we simplify the notation by recalling that each $s_i$ depends on $r$ (as well as $n,p$) and put
		\begin{equation}\label{eq: theta_def}
			\theta(r) = \pi_4(s_1) \left( \frac{s_1}{s_3}\right)^{\alpha-\epsilon}.
		\end{equation}
		Place this in \eqref{eq: V_n_prob_bound} to obtain
		\begin{equation}\label{eq: new_V_n_prob_bound}
			\mathbb{P}(V_n(p) \supseteq V) \leq \Cl[lgc]{C: pre_multiset} \left( p-\frac{1}{2}\right)^k \theta(2^n) \prod_{r \in \mathcal{R}(V)} \left( \Cr{C: pre_multiset} \theta(r)\right).
		\end{equation}
		Furthermore, by \cite[Prop.~15]{KMS15}, for any multiset $R$ with $k-1$ elements, we have
		\begin{equation}\label{eq: ordering_bound}
			\#\{V \subset R(n) : \#V = k, \mathcal{R}(V) = R\} \leq \C[lgc] \mathcal{O}(R) 2^{2n} \prod_{r \in R} (C_{21}r),
		\end{equation}
		where $\mathcal{O}(R)$ is the number of different ways the elements of $R$ can be ordered. We use both these bounds in our previous inequalities. First, putting \eqref{eq: new_V_n_prob_bound} in \eqref{eq: moment_to_binomial}, we obtain
		\begin{align*}
			\mathbb{E}\#V_n(p)^k\mathbf{1}_{\{\#V_n(p) \geq k\}} &\leq k^k \sum_{\stackrel{V \subseteq R(n)}{\#V=k}} \left(\Cr{C: pre_multiset} \left( p-\frac{1}{2}\right)^k \theta(2^n) \prod_{r \in \mathcal{R}(V)} \left( \Cr{C: pre_multiset} \theta(r)\right) \right) \\
			&= k^k \sum_{\stackrel{V \subseteq R(n)}{\#V=k}} \left(  \sum_R \left(\Cr{C: pre_multiset} \left( p-\frac{1}{2}\right)^k \theta(2^n) \prod_{r \in R} \left( \Cr{C: pre_multiset} \theta(r)\right) \right) \mathbf{1}_{\mathcal{R}(V) = R}\right).
		\end{align*}
		Next, we restrict the sum over $R$ only to those multisets of size $k-1$ with elements from $\{1/2, 1, 3/2, \dots, 2^{n+1}\}$ such that, if we write their elements in nondecreasing order as $r_1, \dots, r_{k-1}$, then they satisfy \eqref{eq: kiss_r_constraint}. Writing $\hat{\sum_R}$ for this restricted sum, we obtain
		\[
		\mathbb{E}\#V_n(p)^k\mathbf{1}_{\{\#V_n(p) \geq k\}} \leq k^k \sum_{\stackrel{V \subseteq R(n)}{\#V=k}} \left(  \hat{\sum_R} \left(\Cr{C: pre_multiset} \left( p-\frac{1}{2}\right)^k \theta(2^n) \prod_{r \in R} \left( \Cr{C: pre_multiset} \theta(r)\right) \right) \mathbf{1}_{\mathcal{R}(V) = R}\right).
		\]
		Last, we interchange the order of summation and use \eqref{eq: ordering_bound} to find, for some $\Cl[lgc]{C: interchange}>0$,
		\begin{equation*}
			\mathbb{E}\#V_n(p)^k\mathbf{1}_{\{\#V_n(p) \geq k\}} \leq \Cr{C: interchange}^k k^k 2^{2n}\left( p-\frac{1}{2}\right)^k \theta(2^n) \hat{\sum_R} \left( \mathcal{O}(R)\prod_{r \in R} \left(r\theta(r)\right)  \right).
		\end{equation*}
		Similarly to \cite[Eq.~(3.4)]{K14}, for $j= \lfloor \log_4 k\rfloor$ and $m=k-4^j$,
		\begin{equation*}
			\begin{split}
				\mathbb{E}\#V_n(p)^k\mathbf{1}_{\{\#V_n(p) \geq k\}} &\leq \Cr{C: interchange}^k k^k2^{2n}\left( p-\frac{1}{2}\right)^k \theta(2^n) \binom{k-1}{3, 3\cdot 4, \dots, 3 \cdot 4^{j-1}, m} \\
				&\times \prod_{i=0}^{j-1} \left( \sum_{r=1}^{2^{n+1-i}} r \theta(r)\right)^{3 \cdot 4^i} \left( \sum_{r=1}^{2^{n+1-j}} r \theta(r)\right)^m.
			\end{split}
		\end{equation*}
		The multinomial coefficient is bounded as
		\[
		\binom{k-1}{3,3\cdot 4, \dots, 3 \cdot 4^{j-1}, m} = \binom{k-1}{m} \binom{k-1-m}{3 \cdot 4^{j-1}} \cdot \cdots \leq 2^{k-1 + (k-1-m) + (k-1-m-3\cdot 4^{j-1}) + \cdots} \leq \C[lgc]^k,
		\]
		so we obtain
		\begin{equation}\label{eq: moment_bound_post_counting}
			\mathbb{E}\#V_n(p)^k\mathbf{1}_{\{\#V_n(p) \geq k\}} \leq \Cl[lgc]{C: post_multinomial}^kk^k 2^{2n}\left( p-\frac{1}{2}\right)^k \theta(2^n) \prod_{i=0}^{j-1} \left( \sum_{r=1}^{2^{n+1-i}} r \theta(r)\right)^{3 \cdot 4^i} \left( \sum_{r=1}^{2^{n+1-j}} r \theta(r)\right)^m.
		\end{equation}
		
		To bound the right side, we use the fact that there is $\Cl[lgc]{C: sum_bound}>0$ such that for all $n \geq 1$ and integers $s$ with $1\leq s \leq 2^{n+1}$,
		\begin{equation}\label{eq: theta_sum_bound}
			\sum_{r=1}^s r\theta(r) \leq \Cr{C: sum_bound}s^2\theta(s).
		\end{equation}
		We split into cases to verify \eqref{eq: theta_sum_bound}. First, if $s \geq 2^n$, then the definition $\theta(r) = \pi_4(\min\{L(p),r\})\left( \frac{\min\{L(p),r\}}{\min\{2^n,r\}}\right)^{\alpha-\epsilon}$ gives
		\begin{align*}
			\sum_{r=1}^s r\theta(r) &= \sum_{r=1}^{L(p)} r \pi_4(r)  + \sum_{r=L(p)+1}^{2^n-1} r \pi_4(L(p)) \left( \frac{L(p)}{r}\right)^{\alpha-\epsilon} + \sum_{r=2^n}^s r \pi_4(L(p)) \left( \frac{L(p)}{2^n}\right)^{\alpha-\epsilon}\\
			&\leq \Cl[lgc]{C: inter_sum_bound} \left[ L(p)^2 \pi_4(L(p)) + s^2\left( \frac{L(p)}{s}\right)^{\alpha-\epsilon} \pi_4(L(p))  + s^2 \left( \frac{L(p)}{2^n}\right)^{\alpha-\epsilon} \pi_4(L(p))\right] \\
			&= \Cr{C: inter_sum_bound} s^2\theta(s) \left[ \left( \frac{L(p)}{s}\right)^2 \left( \frac{2^n}{L(p)}\right)^{\alpha-\epsilon} + \left( \frac{2^n}{s}\right)^{\alpha-\epsilon} + 1\right] \\
			&\leq 3\Cr{C: inter_sum_bound}s^2\theta(s).
		\end{align*}
		To go from the first line to the second line, we used $\sum_{r=1}^{r'} r\pi_4(r) \leq C (r')^2\pi_4(r')$, as in \cite[Lem.~3.1]{K14}.
		
		If $L(p) < s \leq 2^n-1$, then
		\begin{align*}
			\sum_{r=1}^s r\theta(r) &\leq \sum_{r=1}^{L(p)} r \pi_4(r)  + \sum_{r=L(p)+1}^s r \pi_4(L(p)) \left( \frac{L(p)}{r}\right)^{\alpha-\epsilon} \\
			&\leq \Cr{C: inter_sum_bound}\left[ L(p)^2 \pi_4(L(p)) + s^2 \left( \frac{L(p)}{s}\right)^{\alpha-\epsilon} \pi_4(L(p))\right] \leq \Cr{C: inter_sum_bound} s^2\theta(s) \left[ \left( \frac{L(p)}{s}\right)^{2-\alpha+\epsilon} + 1\right],
		\end{align*}
		which is bounded by $2\Cr{C: inter_sum_bound}s^2\theta(s)$. Last, if $1 \leq s \leq L(p)$, then
		\[
		\sum_{r=1}^s r\theta(r) = \sum_{r=1}^s r\pi_4(r) \leq \Cr{C: inter_sum_bound}s^2\pi_4(s) = \Cr{C: sum_bound}s^2\theta(s).
		\]
		This shows \eqref{eq: theta_sum_bound}.
		
		Putting \eqref{eq: theta_sum_bound} into \eqref{eq: moment_bound_post_counting} produces 
		\begin{align*}
			&\mathbb{E}\#V_n(p)^k\mathbf{1}_{\{\#V_n(p) \geq k\}}\\
			\leq~& \Cr{C: post_multinomial}^k k^k 2^{2n}\left( p-\frac{1}{2}\right)^k \left[ \theta(2^n) \prod_{i=0}^{j-1} \left(\Cr{C: sum_bound} 2^{2(n+1-i)} \theta\left( 2^{n+1-i}\right)\right)^{3 \cdot 4^i}\right] \left( \Cr{C: sum_bound} 2^{2(n+1-j)} \theta\left(2^{n+1-j}\right)\right)^m \\
			\leq~& \Cr{C: inter_sum_bound}^k k^k 2^{2nk} \left( p-\frac{1}{2}\right)^k \theta(2^n) 4^{-mj} \theta\left( 2^{n+1-j}\right)^m\left[ \prod_{i=0}^{j-1} 4^{-i3\cdot 4^i} \right] \left[ \prod_{i=0}^{j-1}\theta\left(2^{n+1-i}\right)^{3 \cdot 4^i} \right].
		\end{align*}
		We can further simplify this by using $4^{-mj}\prod_{i=0}^{j-1} 4^{-i3\cdot 4^i} \leq \C[lgc]^k k^{-k}$ and
		\[
		\theta(2^n) \theta(2^{n+1-j})^m \prod_{i=0}^{j-1} \theta(2^{n+1-i})^{3 \cdot 4^i} \leq \theta\left( \frac{2^n}{\sqrt{k}}\right)^k
		\]
		to obtain
		\[
		\mathbb{E}\#V_n(p)^k\mathbf{1}_{\{\#V_n(p) \geq k\}} \leq \left[ \Cl[lgc]{C: new_split} 2^{2n} \left( p- \frac{1}{2}\right) \theta\left( \frac{2^n}{\sqrt{k}}\right) \right]^k.
		\]
		The definition of $\theta(2^n/\sqrt{k})$ implies then that
		\[
		\mathbb{E}\#V_n(p)^k\mathbf{1}_{\{\#V_n(p) \geq k\}} \leq \begin{cases}
			\left[ \Cr{C: new_split} 2^{2n}\left( p - \frac{1}{2}\right) \pi_4(L(p)) \left( \frac{L(p)}{\frac{2^n}{\sqrt{k}}}\right)^{\alpha-\epsilon} \right]^k & \quad \text{if } k \leq \left( \frac{2^n}{L(p)}\right)^2 \\
			\left[ \Cr{C: new_split}2^{2n}\left( p - \frac{1}{2} \right) \pi_4\left( \frac{2^n}{\sqrt{k}}\right) \right]^k &\quad\text{if } k \geq \left( \frac{2^n}{L(p)}\right)^2.
		\end{cases}
		\]
		Applying the inequality $(p-1/2)L(p)^2 \pi_4(L(p)) \leq C$ from \cite[Eq.~(4.5)]{K87} yields
		\begin{equation}\label{eq: almost_end_moment_bound}
			\mathbb{E}\#V_n(p)^k\mathbf{1}_{\{\#V_n(p) \geq k\}} \leq \begin{cases}
				\left[ \Cl[lgc]{C: even_new_split} k^{\frac{\alpha-\epsilon}{2}} \left(\frac{2^n}{L(p)}\right)^{2-\alpha+\epsilon}  \right]^k & \quad \text{if } k \leq \left( \frac{2^n}{L(p)}\right)^2 \\
				\left[ \Cr{C: even_new_split} \left( \frac{2^n}{L(p)}\right)^2 \frac{\pi_4\left( \frac{2^n}{\sqrt{k}}\right)}{\pi_4(L(p))} \right]^k &\quad\text{if } k \geq \left( \frac{2^n}{L(p)}\right)^2.
			\end{cases}
		\end{equation}
		To convert this into \eqref{eq: N_n_moments}, in the bottom expression we apply quasimultiplicativity in the form $\pi_4(2^n/\sqrt{k})/\pi_4(L(p)) \leq C/\pi_4(2^n/\sqrt{k},L(p))$ and the exact value $5/4$ of the four-arm exponent from \eqref{eq: arm_exponents_1_2_4}, which gives $\pi_4(2^n/\sqrt{k},L(p)) \geq c(2^n/(\sqrt{k}L(p)))^{5/4 + \epsilon}$. Together these imply
		\[
		\frac{\pi_4\left( \frac{2^n}{\sqrt{k}}\right)}{\pi_4(L(p))} \leq \C[lgc] k^{\frac{\frac{5}{4}+\epsilon}{2}} \left( \frac{2^n}{L(p)}\right)^{-\frac{5}{4}-\epsilon}.
		\]
		Putting this in \eqref{eq: almost_end_moment_bound}, we obtain \eqref{eq: N_n_moments}.
		
		It will be more convenient to use the following consequence of \eqref{eq: N_n_moments}:
		\begin{align}
			\mathbb{E}\#V_n(p)^k &\leq k^k\mathbf{1}_{\{\#V_n(p) < k\}} + \mathbb{E}\#V_n(p)^k\mathbf{1}_{\{\#V_n(p)\geq k\}}\nonumber \\
			&\leq \begin{cases}
				\left[ \Cl[lgc]{C: use_alot} k^{\frac{\alpha-\epsilon}{2}} \left( \frac{2^n}{L(p)}\right)^{2-\alpha+\epsilon}\right]^k &\quad\text{if } k \leq \left( \frac{2^n}{L(p)}\right)^2 \\
				(\Cr{C: use_alot} k)^k & \quad \text{if } k \geq \left( \frac{2^n}{L(p)}\right)^2.
			\end{cases} \label{eq: N_n_moments_simplified}
		\end{align}
		Although we have proved this inequality for integer $k \geq 1$, by using the inequality $\mathbb{E}\#V_n(p)^k \leq \left(\mathbb{E}\#V_n(p)^{\lceil k \rceil}\right)^{k/\lceil k \rceil}$, one can extend it to real $k \geq 1$, after possibly increasing $\Cr{C: use_alot}$.
		
		Using the moment bound \eqref{eq: N_n_moments_simplified}, we will now prove the tail bound \eqref{eq: to_show_N_n} for $\#V_n(p)$. We recall that this will suffice to complete the proof of Theorem~\ref{thm: upper_tail_theorem}. To do that, we may now apply Markov's inequality to \eqref{eq: N_n_moments_simplified}. For any $\lambda>0$, and $n \geq 1$, $p>1/2$ with $L(p) \leq 2^n$,
		\begin{align*}
			&\mathbb{P}(\#V_n(p) \geq \lambda) \\
			\leq~& \min\left\{ \min_{k \leq \left( \frac{2^n}{L(p)}\right)^2} \left( \frac{\Cr{C: use_alot}\left( \frac{2^n}{L(p)}\right)^{2-\alpha+\epsilon}}{\lambda} k^{\frac{\alpha-\epsilon}{2}}\right)^k, \min_{k \geq \left( \frac{2^n}{L(p)}\right)^2} \left(\Cr{C: use_alot} \frac{k}{\lambda}\right)^k \right\}.
		\end{align*}
		We make the choice
		\[
		k = \begin{cases}
			\left( \frac{\lambda}{\Cr{C: use_alot} e \left( \frac{2^n}{L(p)}\right)^{2-\alpha + \epsilon}}\right)^{\frac{2}{\alpha-\epsilon}} &\quad\text{if } \lambda \leq \Cr{C: use_alot}e \left( \frac{2^n}{L(p)}\right)^2 \\
			\frac{\lambda}{\Cr{C: use_alot}e} &\quad\text{if } \lambda \geq \Cr{C: use_alot}e \left( \frac{2^n}{L(p)}\right)^2
		\end{cases}
		\]
		to produce the bound
		\[
		\mathbb{P}(\#V_n(p) \geq \lambda) \leq \begin{cases}
			\exp\left( - \left( \frac{\lambda}{\Cr{C: use_alot}e \left( \frac{2^n}{L(p)}\right)^{2-\alpha+\epsilon}}\right)^{\frac{2}{\alpha-\epsilon}}\right) &\quad \text{if } \lambda \leq \Cr{C: use_alot}e \left( \frac{2^n}{L(p)}\right)^2 \\
			\exp\left( -\frac{\lambda}{\Cr{C: use_alot}e}\right) &\quad \text{if } \lambda \geq \Cr{C: use_alot}e \left( \frac{2^n}{L(p)}\right)^2.
		\end{cases}
		\]
		Replacing $\lambda$ by $\lambda(2^n/L(p))^{2-\alpha+\epsilon}$, we obtain \eqref{eq: to_show_N_n}.
	\end{proof}

		\begin{figure}
		\centering
		\includegraphics[width=0.7\linewidth]{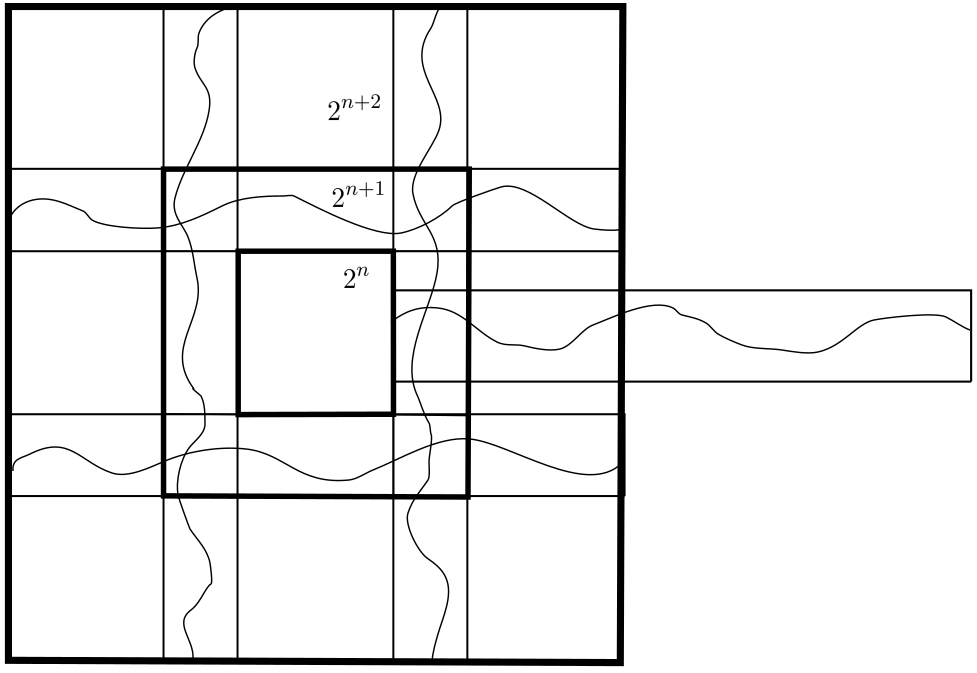}
		\caption{Depiction of the argument in the proof of Corollary \ref{cor: T_n_tail_bound}. The five rectangles with crossings are translates and rotates of the rectangle $R'(n) = [-8\cdot 2^{n-1},8\cdot 2^{n-1}]\times [-2^{n-1},2^{n-1}]$. Each curve represents a crossing in the long direction of its containing rectangle with minimal passage time.}
		\label{fig: T_n_tail_bound}
	\end{figure}

	As a result of Theorem~\ref{thm: upper_tail_theorem}, we can prove a similar tail inequality for our original variables $\mathsf{T}(n)$.
	\begin{Cor}\label{cor: T_n_tail_bound}
		Let $\epsilon \in (0,\alpha)$. There exist $\Cl[smc]{c: cor_3_2},\Cl[lgc]{C: cor_3_2}>0$ such that for any $n \geq 2$ and $p > 1/2$ with $L(p) \leq 2^{n-1}$
		\begin{align*}
			\mathbb{P}\left( \mathsf{T}(n) \geq \lambda F^{-1}(p) \left( \frac{2^n}{L(p)}\right)^{2-\alpha+\epsilon}\right) &\leq 65\exp\left( - \Cr{c: cor_3_2} \frac{2^n}{L(p)}\right) \\
			&+ \begin{cases}
				65\exp\left( -\Cr{c: cor_3_2} \lambda^{\frac{2}{\alpha-\epsilon}}\right) &\quad \text{if } \lambda \leq \Cr{C: cor_3_2} \left( \frac{2^n}{L(p)}\right)^{\alpha-\epsilon} \\
				65\exp\left( - \Cr{c: cor_3_2} \left( \frac{2^n}{L(p)}\right)^{2-\alpha+\epsilon} \lambda\right) &\quad \text{if } \lambda \geq \Cr{C: cor_3_2} \left( \frac{2^n}{L(p)}\right)^{\alpha-\epsilon}.
			\end{cases}
		\end{align*}
	\end{Cor}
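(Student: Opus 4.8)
The plan is to bound $\mathsf{T}(n)$ by a sum of a bounded number of box‑crossing passage times at scale $2^{n-1}$ and then apply Theorem~\ref{thm: upper_tail_theorem} to each. The first step is to build the circuit and path defining $\mathsf{T}^{(1)}(n)$ and $\mathsf{T}^{(2)}(n)$ (see \eqref{eq: T(n)_def}) out of five long‑direction crossings of translated and rotated copies of $R'(n)=[-8\cdot 2^{n-1},8\cdot 2^{n-1}]\times[-2^{n-1},2^{n-1}]$, as in Figure~\ref{fig: T_n_tail_bound}. Put one copy of $R'(n)$ on the ``top'' of the annulus $B(2^{n+1})\setminus B(2^n)$, namely $[-2^{n+2},2^{n+2}]\times[2^n,2^{n+1}]$, and analogous copies on the other three sides. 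A minimal long‑direction crossing of each of these, restricted to $B(2^{n+1})$, is an arc of $\text{Ann}(2^n,2^{n+1})$ joining two of its outer corners, and consecutive arcs meet inside the annulus because a left--right and a top--bottom crossing of a common box on $\mathbb{T}$ share a vertex; hence the union of the four arcs contains a circuit around $0$ in $B(2^{n+1})\setminus B(2^n)$. A fifth copy, $[-2^{n+2},2^{n+2}]\times[-2^{n-1},2^{n-1}]$, has a long crossing that passes through $B(2^n)$ and reaches $\partial B(2^{n+2})$, so its part inside $B(2^{n+2})\setminus B(2^n)$ contains a path joining $B(2^n)$ to $\partial B(2^{n+2})$. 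Taking the five crossings minimizing,
\[
\mathsf{T}(n)=\mathsf{T}^{(1)}(n)+\mathsf{T}^{(2)}(n)\le\sum_{j=1}^{5}\left(\text{minimal long-direction crossing time of the $j$-th copy of }R'(n)\right).
\]

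The second step is to split each long crossing of a copy of $R'(n)$ --- a rectangle of dimensions $2^{n+3}\times 2^n$ --- into at most $13$ pieces at scale $2^{n-1}$. Cover the long direction of $R'(n)$ by seven overlapping translates of $R(n-1)=[-2^n,2^n]\times[-2^{n-1},2^{n-1}]$ laid end to end, so that consecutive translates meet in a $2^n\times 2^n$ box; take a minimal crossing of each translate in its long direction together with a minimal crossing in the short direction of each of the (at most six) overlap boxes. The short crossing of an overlap box meets the long crossings of its two neighbors, so the union of these at most $13$ crossings contains a long crossing of $R'(n)$, with passage time at most the sum of the at most $13$ minimal crossing passage times. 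Each such piece has passage time stochastically dominated by $T(n-1)$: this is immediate for the translates of $R(n-1)$, and for an overlap box it follows by embedding the box in a suitable copy of $R(n-1)$ and using that $\mathbb{T}$ as embedded here is symmetric under $(x,y)\mapsto(y,x)$. Combining the two steps, $\mathsf{T}(n)$ is bounded by a sum of at most $5\cdot 13 = 65$ random variables, each with tail controlled by Theorem~\ref{thm: upper_tail_theorem} at scale $n-1$.

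The third step is a union bound. The event $\{\mathsf{T}(n)\ge\lambda F^{-1}(p)(2^n/L(p))^{2-\alpha+\epsilon}\}$ forces one of the at most $65$ summands to exceed $\tfrac{\lambda}{65}F^{-1}(p)(2^n/L(p))^{2-\alpha+\epsilon}$. Since $L(p)\le 2^{n-1}$, Theorem~\ref{thm: upper_tail_theorem} applies at scale $n-1$; rewriting $\tfrac{1}{65}(2^n/L(p))^{2-\alpha+\epsilon}$ as a fixed multiple of $(2^{n-1}/L(p))^{2-\alpha+\epsilon}$ and using $2^{n-1}/L(p)\asymp 2^n/L(p)$, the probability of each such event is at most $\exp(-c\,2^n/L(p))$ plus the appropriate $\lambda$-dependent term, every constant --- including the threshold constant $\Cr{C: main_tail}$ --- having changed only by an absolute factor. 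Summing the at most $65$ resulting bounds and absorbing the absolute constants into new constants $\Cr{c: cor_3_2}$ and $\Cr{C: cor_3_2}$ yields exactly the stated inequality, the prefactor $65$ being the number of terms in the union bound.

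The only genuine content is the geometric bookkeeping, which is also the main point to watch: one must verify (i) that the five copies of $R'(n)$ can be positioned so that, through the forced intersection of transverse box crossings on $\mathbb{T}$, their long crossings contain both the circuit around $0$ in $B(2^{n+1})\setminus B(2^n)$ and the path from $B(2^n)$ to $\partial B(2^{n+2})$ in $B(2^{n+2})\setminus B(2^n)$; and (ii) that rebuilding each long crossing of $R'(n)$ from scale‑$2^{n-1}$ pieces takes a number of pieces independent of $n$ and $p$ --- here exactly $13$ --- so that the prefactor stays bounded, namely $65$. No probabilistic ingredient beyond Theorem~\ref{thm: upper_tail_theorem} is needed; this is the ``straightforward gluing argument'' alluded to before that theorem, and the hypothesis $L(p)\le 2^{n-1}$ is precisely what allows Theorem~\ref{thm: upper_tail_theorem} to be applied to the scale‑$(n-1)$ pieces.
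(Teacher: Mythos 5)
Your proposal is correct and follows essentially the same route as the paper: you bound $\mathsf{T}(n)$ by five long-direction crossings of translated/rotated copies of $R'(n)$, decompose each such crossing into $13$ scale-$(n-1)$ pieces (seven horizontal copies of $R(n-1)$ plus six vertical connectors in the overlap squares, which you dominate by $T(n-1)$ via lattice symmetry), take a union bound over the $65$ pieces, and apply Theorem~\ref{thm: upper_tail_theorem} at scale $n-1$, noting that $L(p)\le 2^{n-1}$ is exactly what makes that application valid. The only cosmetic difference is that the paper realizes the vertical connectors directly as long-direction crossings of $2^n\times 2^{n+1}$ rectangles sticking slightly outside $R'(n)$, which makes the distributional identification with $T(n-1)$ immediate, whereas you phrase it as stochastic domination via an embedding; both yield the same bound with prefactor $65$.
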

	\begin{proof}
		The construction we give is similar to that in \cite[Prop.~2]{DT19}. We will build a circuit around the origin in $B(2^{n+1}) \setminus B(2^n)$ (whose passage time is an upper bound for $\mathsf{T}^{(1)}(n)$) and a path connecting $B(2^n)$ to $\partial B(2^{n+2})$ in $B(2^{n+2}) \setminus B(2^n)$ (whose passage time is an upper bound for $\mathsf{T}^{(2)}(n)$). It is possible to do this within the union of long-direction crossings of five rectangles which are translates and rotates of the rectangle $R'(n) = [-8\cdot 2^{n-1},8\cdot 2^{n-1}]\times [-2^{n-1},2^{n-1}]$ (see Figure \ref{fig: T_n_tail_bound}). By a union bound,
		\begin{equation}\label{eq: first_union}
			\mathbb{P}\left( \mathsf{T}(n) \geq \lambda F^{-1}(p) \left( \frac{2^n}{L(p)}\right)^{2-\alpha+\epsilon}\right) \leq 5 \mathbb{P}\left( T'(n) \geq \frac{\lambda}{5} F^{-1}(p) \left( \frac{2^n}{L(p)}\right)^{2-\alpha+\epsilon}\right),
		\end{equation}
		where $T'(n)$ is the minimal passage time among all paths that remain in $R'(n)$ and connect its left side to its right side.
		
		The variable $T'(n)$ is bounded using a similar geometric construction. Let $\Gamma_1, \dots, \Gamma_7$ be paths such that $\Gamma_i$ is in $[-8\cdot 2^{n-1} + 2(i-1)2^{n-1}, -8\cdot 2^{n-1}+ 2(i+1)2^{n-1}] \times [-2^{n-1},2^{n-1}]$, connects the left side of the rectangle to its right side, and has minimal passage time among all such paths. Let $\hat{\Gamma}_1, \dots, \hat{\Gamma}_6$ be paths such that $\hat{\Gamma}_i$ is in $[-8\cdot 2^{n-1} + 2i2^{n-1}, -8\cdot2^{n-1}+2(i+1)2^{n-1}] \times [-2^{n-1},3\cdot 2^{n-1}]$, connects the top side of the rectangle to the bottom side, and has minimal passage time among all such paths. By planarity, there is a path remaining in $[-8\cdot 2^{n-1},8\cdot 2^{n-1}]\times[-2^{n-1},2^{n-1}]$ which starts on the left side of this rectangle, ends on the right, and is contained in the union of the $\Gamma_i$'s and the $\hat{\Gamma}_i$'s. We therefore find, for $T(n-1)$ defined as below \eqref{eq: max_exponent_bound},
		\[
		\mathbb{P}\left( T'(n) \geq \frac{\lambda}{5} F^{-1}(p) \left( \frac{2^n}{L(p)}\right)^{2-\alpha+\epsilon}\right) \leq 13\mathbb{P}\left( T(n-1) \geq \frac{\lambda}{65} F^{-1}(p) \left( \frac{2^n}{L(p)}\right)^{2-\alpha+\epsilon}\right).
		\]
		We apply Theorem~\ref{thm: upper_tail_theorem} to the right side and combine this with \eqref{eq: first_union} to complete the proof.
	\end{proof}

	\subsubsection{Ruling out exceptional times}\label{sec: ruling_out}
	Now that we have the tail bound for $\mathsf{T}(n)$ in Corollary~\ref{cor: T_n_tail_bound}, we can return to the proof of Theorem~\ref{thm: other_side}. Suppose that $\sum_{k=2}^\infty k^{7/8}a_k < \infty$. Recall that if $\mathsf{T}_t(n)$ is the variable $\mathsf{T}(n)$ evaluated in the configuration $(\tau_v(t))$, it suffices by \eqref{eq: t_max_implication} to prove that
	\begin{equation}\label{eq: finally_to_show_other_side}
		\text{a.s., for all } t \in [0,1],~ \sum_{n=1}^\infty \mathsf{T}_t(n) < \infty.
	\end{equation}
	To do this, we will bound partial sums of $\mathsf{T}_t(n)$ by partial sums of $\sum_k k^{7/8} a_k$. We will need a simple tool that helps us bound $\mathsf{T}_t(n)$ for all $t$ simultaneously.
	\begin{Lemma}\label{lem: poisson_lemma}
		For each $n$, let $A_n$ be an event depending on the weights $(\tau_v)$ for $v \in B(2^n)$. If $\mathbb{P}(A_n) \leq 18^{-n}$ for all large $n$, then
		\[
		\sum_{n=1}^\infty \mathbb{P}\left(A_n \text{ occurs in } (\tau_v(t)) \text{ for some } t \in [0,1]\right) <\infty.
		\]
	\end{Lemma}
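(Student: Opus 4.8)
The plan is to exploit the fact that on the bounded interval $[0,1]$ the dynamical environment restricted to the finite box $B(2^n)$ is a pure jump process with only finitely many jumps, so that the occurrence of $A_n$ somewhere in $[0,1]$ reduces, after a union bound, to the occurrence of $A_n$ at finitely many static configurations, each distributed as the usual i.i.d.\ environment.

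First I would introduce $N_n$, the number of rings during $[0,1]$ among the Poisson clocks $(\mathfrak{s}_v)_{v \in B(2^n)}$. Since these are independent rate-one Poisson processes, $N_n$ is Poisson-distributed with mean $|B(2^n)| = (2^{n+1}+1)^2 \le 9\cdot 4^n$ for $n \ge 1$. Between consecutive ring times the weights $(\tau_v(t))_{v \in B(2^n)}$ do not change, so the map $t \mapsto (\tau_v(t))_{v \in B(2^n)}$ on $[0,1]$ takes at most $N_n + 1$ distinct values: its value at $t=0$ and its value immediately after each of the $N_n$ rings. Because $A_n$ is an event depending only on $(\tau_v)_{v \in B(2^n)}$, if $A_n$ holds at some $t\in[0,1]$ then it holds at one of these at most $N_n+1$ configurations.

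Next, I would condition on the full collection of Poisson clocks. By the construction of the dynamics, at each ring a weight is replaced by $F^{-1}$ of a fresh independent uniform, and the weights at time $0$ are independent of the clocks; hence, conditionally on the clocks, the configuration $(\tau_v(t))_{v \in B(2^n)}$ at each of the distinguished epochs is i.i.d.\ with distribution function $F$, i.e.\ it has the law of the static environment. A union bound over the at most $N_n+1$ epochs therefore gives
\[
\mathbb{P}\big( A_n \text{ occurs in } (\tau_v(t)) \text{ for some } t\in[0,1] \,\big|\, \text{clocks}\big) \le (N_n+1)\,\mathbb{P}(A_n).
\]
Taking expectations and using $\mathbb{E}[N_n + 1] = |B(2^n)|+1 \le 10\cdot 4^n$ gives, for all large $n$ (where $\mathbb{P}(A_n) \le 18^{-n}$),
\[
\mathbb{P}\big( A_n \text{ occurs for some } t\in[0,1]\big) \le 10\cdot 4^n\cdot 18^{-n} = 10\,(2/9)^n,
\]
which is summable; the finitely many smaller values of $n$ contribute only a finite amount to the series, so $\sum_n \mathbb{P}(A_n \text{ occurs for some } t\in[0,1]) < \infty$.

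The argument is essentially immediate once set up; the only point that deserves care is the reduction to finitely many static configurations, where one should observe that the union bound requires merely the marginal law of the environment at each epoch (not any joint independence across epochs), and that the relevant epochs are precisely $t=0$ together with the instants just after each clock ring inside $B(2^n)$.
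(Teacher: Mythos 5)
Your proof is correct, and it takes a genuinely different route from the paper's. The paper partitions $[0,1]$ into a deterministic grid of roughly $17^n$ cells of length $\leq 17^{-n}$, union-bounds $\mathbb{P}(A_n)$ over the grid points, and then separately controls the ``double jump'' event that some single cell contains two or more Poisson rings in $B(2^n)$, showing this has small probability by a second-order Poisson tail estimate; summability comes from choosing the grid just barely coarser than $18^{-n}$. You instead condition on the Poisson clocks in $B(2^n)$ and use the natural random partition they induce: the environment restricted to $B(2^n)$ is constant between rings, so over $[0,1]$ it visits at most $N_n+1$ configurations, where $N_n \sim \mathrm{Poisson}(|B(2^n)|)$; since conditionally on the clocks each such configuration is marginally i.i.d.\ $F$ (different vertices draw from disjoint families of uniforms, and the indices are clock-measurable), a conditional union bound gives $(N_n+1)\mathbb{P}(A_n)$ and taking expectations yields $10\cdot 4^n\cdot 18^{-n}$, which is summable. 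Your observation that only the marginal law at each epoch is needed (no joint independence across epochs) is exactly the right thing to note. The conditional approach is arguably cleaner, as it works with the exact partition into intervals of constancy and entirely avoids the double-jump estimate and the tuning of the grid mesh against $18^{-n}$; the paper's approach is slightly more hands-on but requires no conditioning.
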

	\noindent
	By the Borel-Cantelli Lemma, the above lemma implies that a.s., for all large $n$, $A_n^c$ occurs for all $t \in [0,1]$ simultaneously.
	\begin{proof}
		There is no harm in assuming $\mathbb{P}(A_n) \leq 18^{-n}$ for all $n$. To prove the lemma, we will use a union bound over a fine subdivision of times. For this purpose, we define times $0=t_0^{(n)} < t_1^{(n)} < \dots < t_{p_n}^{(n)} = 1$ with
		\[
		t_i^{(n)} - t_{i-1}^{(n)} \leq 17^{-n} \text{ and } p_n \leq  17^n +1.
		\]
		Note that if $A_n$ occurs at some time $t \in [0,1]$ but not at any $t_i^{(n)}$, then a.s., at least one interval $\bigg( t_{i-1}^{(n)}, t_i^{(n)}\bigg]$ must contain at least two times at which vertices in $B(2^n)$ resample their values. Recalling that $(\mathfrak{s}_v(t))_{t \geq 0}$ is the Poisson process associated with vertex $v$, and setting $\mathfrak{s}(t;n) = \sum_{v \in B(2^n)} \mathfrak{s}_v(t)$ (which is a Poisson process of rate $r(n) = (2^n+1)^2$), then, we obtain
		\begin{align*}
			&\mathbb{P}(A_n \text{ occurs in }(\tau_v(t))\text{ for some } t \in [0,1]) \\
			\leq~& \mathbb{P}(A_n \text{ occurs in }(\tau_v(t_i^{(n)})) \text{ for some } i)  \\
			+~& \mathbb{P}\left(\mathfrak{s}(t_i^{(n)};n) > \mathfrak{s}(t_{i-1}^{(n)};n) +1 \text{ for some }i\right) \\
			\leq~& (p_n+1) \mathbb{P}(A_n) + p_n \mathbb{P}(\mathfrak{s}(17^{-n};n) \geq 2) \\
			\leq~& \left(17^n + 2\right) \left(18^{-n} + \left( 1 - e^{-17^{-n}r(n)} - 17^{-n}r(n)e^{-17^{-n}r(n)}\right)\right) \\
			\leq~&  \left(17^n + 2\right) \left(18^{-n} + (17^{-n}r(n))^2 \right).
		\end{align*}
		Because $r(n) \leq 4\cdot4^n$, the bound is summable and this completes the proof.
	\end{proof}
	
	Our next task is to use Lemma~\ref{lem: poisson_lemma} in conjunction with the tail bound Corollary~\ref{cor: T_n_tail_bound} to truncate the variables $\mathsf{T}_t(n)$. To do this, we first reformulate the inequality of the corollary in a more useful way. Use $\lambda = \Cr{C: cor_3_2} \left( \frac{2^n}{L(p)}\right)^{\frac{\alpha-\epsilon}{2}}$ to obtain for $n \geq 2$ and $p > 1/2$ with $L(p) \leq 2^{n-1}$
	\begin{equation*}\label{eq: hamburger_pastaroni}
		\mathbb{P}\left( \mathsf{T}(n) \geq \Cr{C: cor_3_2} F^{-1}(p) \left( \frac{2^n}{L(p)}\right)^{2-\frac{\alpha-\epsilon}{2}}\right) \leq \Cl[lgc]{C: pastaroni} \exp\left( - \C[smc] \frac{2^n}{L(p)}\right).
	\end{equation*}
	If we set $p = p_{\left\lceil 2^n/u\right\rceil}$ in this equation and write 
	\begin{equation}\label{eq: eta_def}
		\eta = 2-\frac{\alpha-\epsilon}{2}
	\end{equation}
	for simplicity, we obtain from \eqref{eq: approx_inverse} that
	\[
	\mathbb{P}\left( \mathsf{T}(n) \geq \Cl[smc]{c: stretched} u^\eta F^{-1}\left( p_{\left\lceil \frac{2^n}{u}\right\rceil}\right) \right) \leq \Cr{C: pastaroni} \exp\left( -\Cr{c: stretched} u\right) \text{ for } u \in \left[ 4,2^n\right].
	\]
	Substituting $u^{1/\eta}$ for $u$, this becomes 
	\[
	\mathbb{P}\left( \mathsf{T}(n) \geq \Cr{c: stretched} u F^{-1}\left( p_{\left\lceil \frac{2^n}{u^{\frac{1}{\eta}}}\right\rceil}\right) \right) \leq \Cr{C: pastaroni} \exp\left( -\Cr{c: stretched} u^{\frac{1}{\eta}}\right) \text{ for } u \in \left[ 4^\eta,2^{n\eta}\right].
	\]
	As long as $u \leq (4/3)^{n\eta}$, we have $1.5^n \leq 2^n/u^{1/\eta}$, so we obtain for $n \geq 2$
	\begin{equation}\label{eq: new_tail_version}
		\mathbb{P}\left( \mathsf{T}(n) \geq \Cr{c: stretched} u F^{-1}\left( p_{\left\lceil 1.5^n\right\rceil}\right) \right) \leq \Cr{C: pastaroni} \exp\left( - \Cr{c: stretched} u^{\frac{1}{\eta}}\right)\text{ for } u \in \left[ 4^\eta, \left( \frac{4}{3}\right)^{n\eta}\right].
	\end{equation}
	
	Equation \eqref{eq: new_tail_version} is our more useful version of the bound of Corollary~\ref{cor: T_n_tail_bound}. Because it only applies for $u \leq (4/3)^{n\eta}$, we truncate our variables at this level. Set
	\[
	X_{n,t} = \frac{\mathsf{T}_t(n)}{\Cr{c: stretched} F^{-1}\left( p_{\left\lceil 1.5^n\right\rceil}\right)} \text{ and } Y_{n,t} = X_{n,t} \mathbf{1}_{\{X_{n,t} \leq \left( \frac{4}{3}\right)^{n\eta}\}}.
	\]
	For future reference, we record that
	\begin{equation}\label{eq: Y_bound}
		\mathbb{P}\left( Y_{n,0} \geq u\right) \leq \Cr{C: pastaroni} \exp\left( - \Cr{c: stretched} u^{\frac{1}{\eta}}\right) \text{ for } u \geq 0 \text{ and } n \geq 2.
	\end{equation}
	We now claim
	\begin{equation}\label{eq: truncation_holds}
		\sum_{n=1}^\infty \mathbb{P}(X_{n,t} \neq Y_{n,t} \text{ for some } t \in [0,1]) < \infty.
	\end{equation}
	To prove this, we apply Lemma~\ref{lem: poisson_lemma}. For some $\Cl[smc]{c: more_stretched}>0$,
	\begin{align*}
		\mathbb{P}(X_{n,0} \neq Y_{n,0}) &\leq \mathbb{P}\left(\mathsf{T}(n) \geq \Cr{c: stretched} F^{-1}\left( p_{\left\lceil 1.5^n\right\rceil}\right) \left( \frac{4}{3}\right)^{\eta n}\right) \\
		&\leq \mathbb{P}\left( \mathsf{T}(n) \geq \Cr{c: more_stretched} \left( \frac{4}{3}\right)^{\frac{\alpha-\epsilon}{2} n}  F^{-1}\left( p_{\left\lceil 1.5^n\right\rceil}\right) \left( \frac{2^n}{L\left( p_{\left\lceil 1.5^n\right\rceil}\right)}\right)^{2-\alpha+\epsilon}
		\right).
	\end{align*}
	We use Corollary~\ref{cor: T_n_tail_bound} to get
	\begin{equation}\label{eq: X_n_Y_n}
		\mathbb{P}(X_{n,0} \neq Y_{n,0}) \leq 65 \exp\left( -\Cr{c: cor_3_2} \frac{2^n}{L\left( p_{\left\lceil 1.5^n\right\rceil}\right)}\right) + 65 \exp \left( -\C[smc] \left( \frac{4}{3}\right)^{n}\right).
	\end{equation}
	Because this is $\leq 18^{-(n+2)}$ for all large $n$, and the variables $X_{n,0},Y_{n,0}$ depend only on weights associated to vertices in $B(2^{n+2})$, Lemma~\ref{lem: poisson_lemma} implies \eqref{eq: truncation_holds}.
	
	Because of \eqref{eq: truncation_holds}, to prove \eqref{eq: finally_to_show_other_side}, we are reduced to showing that
	\begin{equation}\label{eq: last_to_show}
		\text{a.s., for all } t \in [0,1],~ \sum_{n=1}^\infty \left[ F^{-1}\left( p_{\left\lceil 1.5^n\right\rceil}\right) Y_{n,t}\right] <\infty.
	\end{equation}
	To do this, we apply Lemma~\ref{lem: sum_by_parts} with
	$\mathsf{a}_k = Y_{k,t}$ and $\mathsf{b}_k = F^{-1}\left( p_{\left\lceil 1.5^k\right\rceil}\right)$, so we must bound the partial sum $\sum_{k=1}^n Y_{k,t}$. We will show that for some $\Cl[lgc]{C: ind_sum}>0$,
	\begin{equation}\label{eq: sum_bound_claim}
		\mathbb{P}\left( Y_{2,0} + \dots + Y_{n,0} \geq \Cr{C: ind_sum}n^\eta\right) \leq 18^{-(n+2)} \text{ for all large } n.
	\end{equation}
	Because the event $\{Y_{2,0} + \dots + Y_{n,0} \geq \Cr{C: ind_sum}n^\eta\}$ depends only on weights associated to vertices in $B(2^{n+2})$, Lemma~\ref{lem: poisson_lemma} will then imply that a.s.,
	\[
	C_t := \sup_{n \geq 1} \frac{1}{n^\eta} \sum_{k=1}^n Y_{k,t} < \infty \text{ for all } t \in [0,1].
	\]
	That is, we will obtain the following partial sum bound: a.s.,
	\begin{equation}\label{eq: partial_sum_bound}
		\sum_{k=1}^n Y_{k,t} \leq C_t n^\eta \leq \Cl[lgc]{C: burrito} C_t \sum_{k=1}^n k^{\eta-1} \text{ for all } t \in [0,1] \text{ and } n \geq 1.
	\end{equation}
	
	To prove \eqref{eq: sum_bound_claim}, we give the following lemma:
	\begin{Lemma}\label{lem: xi_lemma}
		Let $\xi_j, j=1, 2, \dots$ be i.i.d.~random variables with $\mathbb{E}\xi_1=0$ and $\mathrm{Var}~\xi_1 = 1$. Assume that $\xi_1$ has a continuous distribution with density $p(x)$ such that $p(x) \sim e^{-x^{1-\delta}}$ as $x \to \infty$ for some $\delta \in (0,1)$ and $\mathbb{E}|\xi_1|^k<\infty$ for some $k \geq \lceil 1/\delta-2 \rceil +3$. If $(x_n)$ is a sequence with $x_n/n^{1/(2\delta)} \to \infty$, then
		\begin{equation}\label{eq: xi_lemma}
			\frac{\mathbb{P}(\xi_1 + \dots + \xi_n > x_n)}{n\mathbb{P}(\xi_1 > x_n)} \to 1 \text{ as } n \to \infty.
		\end{equation}
	\end{Lemma}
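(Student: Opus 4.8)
\emph{Approach.} The statement is a ``single big jump'' (one‑large‑deviation) principle for i.i.d.\ sums with a semiexponential (Weibull‑type) right tail: in the range $x_n\gg n^{1/(2\delta)}$ the event $\{S_n>x_n\}$ is, up to a factor $1+o(1)$, produced by one summand being of size $\approx x_n$ while the others behave typically. This belongs to the classical theory of large deviations without Cram\'er's condition (A.V.\ and S.V.\ Nagaev), and the most economical route is to quote the relevant theorem from that theory once one checks that the density asymptotic $p(x)\sim e^{-x^{1-\delta}}$ and the moment bound $\mathbb{E}|\xi_1|^k<\infty$ match its hypotheses; absent a convenient citation, I would prove it directly along the lines below. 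Write $\bar F(x)=\mathbb{P}(\xi_1>x)$, so that integrating the density gives $\bar F(x)\asymp x^{\delta}e^{-x^{1-\delta}}$, and record that $x_n\gg n^{1/(2\delta)}$ forces both $n\bar F(x_n)\to 0$ and $\log n=o(x_n^{1-\delta})$. The goal is $\mathbb{P}(S_n>x_n)=(1+o(1))\,n\bar F(x_n)$; throughout one uses that shifting the threshold by $o(x_n^{\delta})$ leaves $\bar F$ unchanged to leading order.

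\emph{Lower bound.} I would bound $\mathbb{P}(S_n>x_n)$ below by the probability of $\bigcup_{j=1}^n\big(\{\xi_j>x_n+a_n\}\cap\{S_n-\xi_j>-a_n\}\big)$ and apply Bonferroni with the independence of $\xi_j$ and $S_n-\xi_j$, getting $n\,\bar F(x_n+a_n)\,\mathbb{P}(S_{n-1}>-a_n)-\binom{n}{2}\bar F(x_n+a_n)^2$. Choosing the scale $a_n$ with $a_n/\sqrt n\to\infty$ (so $\mathbb{P}(S_{n-1}>-a_n)\to1$, by Chebyshev for the central part and by the moment bound for the possibly heavy left tail of $S_{n-1}$) and $a_n=o(x_n^{\delta})$ (so $\bar F(x_n+a_n)\sim\bar F(x_n)$) gives the lower bound $(1-o(1))n\bar F(x_n)$, the double‑count term being negligible since $n\bar F(x_n)\to0$. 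Such an $a_n$ exists exactly because $x_n^{\delta}/\sqrt n=(x_n/n^{1/(2\delta)})^{\delta}\to\infty$; this is the only place the hypothesis enters the lower bound (e.g.\ $a_n=(\sqrt n\,x_n^{\delta})^{1/2}$ works).

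\emph{Upper bound.} Condition on the largest summand $M_n=\max_j\xi_j$ and split into three regimes with thresholds $y_n\ll x_n$ and $x_n-Z_n$, where $Z_n\to\infty$ and $Z_n=o(x_n^{\delta})$. (i) On $\{M_n>x_n-Z_n\}$, a union bound gives $\le n\bar F(x_n-Z_n)\sim n\bar F(x_n)$. (ii) On $\{M_n\le y_n\}$, bound $S_n$ by the sum of the summands truncated at $y_n$ and apply an exponential Chernoff estimate with parameter $\lambda_n\asymp y_n^{-\delta}$; the density asymptotic yields a truncated moment generating function of the form $1+O(\lambda_n^2)$ (because $\lambda_n t-t^{1-\delta}\le-\delta' t^{1-\delta}$ on $[0,y_n]$, so $\int_0^{y_n}t^2e^{\lambda_n t-t^{1-\delta}}\,dt=O(1)$), whence this regime contributes $\exp(-c\,y_n^{-\delta}x_n+Cn y_n^{-2\delta})$, which is $o(n\bar F(x_n))$ as long as $y_n$ lies in a window $(n/x_n)^{1/\delta}\,\mathrm{polylog}\ll y_n\ll\sqrt n$; that window is nonempty because $x_n\gg n^{1/(2\delta)}\gg n^{\,1-\delta/2}$. (iii) On the intermediate event $\{y_n<M_n\le x_n-Z_n\}$, split further by the number $N$ of summands exceeding $y_n$. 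For $N\ge2$, the subadditivity $s^{1-\delta}+(x_n-s)^{1-\delta}\ge x_n^{1-\delta}$ reduces the bound to $Cn^2\bar F(x_n)\int_{Z_n}^{\infty}e^{-\delta t^{1-\delta}}\,dt$, which is $o(n\bar F(x_n))$ once $Z_n$ is chosen with $\int_{Z_n}^\infty e^{-\delta t^{1-\delta}}\,dt=o(1/n)$ --- compatible with $Z_n=o(x_n^{\delta})$ since $x_n^{\delta}$ dominates every power of $\log n$. For $N=1$, with a lone large summand of size $s\in(y_n,x_n-Z_n]$, the remaining $n-1$ summands (each $\le y_n$) must sum to more than $x_n-s\ge Z_n$; integrating $p(s)\,\mathbb{P}(\Sigma_{n-1}>x_n-s)$ over $s$, where $\Sigma_{n-1}$ denotes the sum of the truncated remainder, and using $\bar F(x_n-s)\le\bar F(x_n)e^{C(x_n-s)x_n^{-\delta}}$ for $s\le x_n/2$ (and the trivial bound for $s>x_n/2$) together with a Bernstein/Fuk--Nagaev moderate‑deviation bound for $\Sigma_{n-1}$, one sees the Gaussian decay $e^{-c(x_n-s)^2/n}$ dominates the factor $e^{C(x_n-s)x_n^{-\delta}}$ precisely because the deficit $x_n-s\ge Z_n\gg n/x_n^{\delta}$; this is again $o(n\bar F(x_n))$. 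Summing the three estimates and comparing with the lower bound yields \eqref{eq: xi_lemma}.

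\emph{Main obstacle.} The delicate step is the $N=1$ case of regime (iii): one must balance the tail cost $e^{-(x_n-u)^{1-\delta}}$ of a summand falling short of $x_n$ by an amount $u$ against the moderate‑deviation cost $e^{-cu^2/n}$ of the remaining summands covering that deficit, uniformly over $u\in[Z_n,x_n/2]$, and do so sharply enough that the bound is truly $o(n\bar F(x_n))$ rather than only $O(n\bar F(x_n))$. This balance is exactly what makes $x_n\gg n^{1/(2\delta)}$ the correct threshold, and controlling the remainder sum in that moderate range --- whose increments may have a heavy left tail --- is where the moment hypothesis $\mathbb{E}|\xi_1|^k<\infty$ with $k\ge\lceil 1/\delta-2\rceil+3$ is consumed; heuristically, the flatter the right tail (smaller $\delta$), the more left‑tail moments are needed to keep the deficit‑covering estimate precise. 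A secondary, purely bookkeeping, difficulty is coordinating the free scales $a_n,y_n,Z_n$ within their respective windows so that the three regime bounds add up to $(1+o(1))n\bar F(x_n)$, not a larger multiple.
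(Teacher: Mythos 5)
The paper's own proof of this lemma is a single sentence: it cites S.V.\ Nagaev's 1969 theorem (\cite[Thm.~3]{N69}), noting that the two-sided density condition in Nagaev's Eq.~(2) is relaxed to the one-sided condition plus a moment bound via a footnote on p.~51 of that paper. You correctly identified this as exactly the right move --- you named Nagaev-type large deviation theory without Cram\'er's condition and observed that quoting it is ``the most economical route'' --- so the first half of your proposal matches the paper's approach. Your alternative, a self-contained ``single big jump'' argument (Bonferroni lower bound with a scale $a_n$ wedged between $\sqrt n$ and $x_n^\delta$; upper bound by conditioning on the largest summand and splitting into three regimes with thresholds $y_n$ and $x_n-Z_n$, handling the dangerous $N=1$ case by balancing the Weibull tail cost $e^{-(x_n-u)^{1-\delta}}$ against a Fuk--Nagaev moderate-deviation cost $e^{-cu^2/n}$), is genuinely different in mechanics and appears sound in outline; the threshold check $(\delta-1)^2\ge 0$ for the window in regime (ii) is correct, and the lower-bound choice $a_n=(\sqrt n\,x_n^\delta)^{1/2}$ exists exactly under the hypothesis $x_n/n^{1/(2\delta)}\to\infty$. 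What the citation buys is brevity and, importantly, the precise moment threshold $k\ge\lceil 1/\delta-2\rceil+3$, which Nagaev's theorem delivers and which your sketch does not track (you note the moment hypothesis is ``consumed'' in the deficit-covering estimate but do not recover the exact exponent $k$). What your direct route buys is transparency about why $n^{1/(2\delta)}$ is the correct threshold and why only the right-tail density asymptotic is needed, with the left tail handled purely by moments --- the very distinction the paper must justify via the footnote in Nagaev.
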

	\begin{proof}
		This is \cite[Thm.~3]{N69} with the condition $p(x) \sim e^{-|x|^{1-\delta}}$ as $|x|\to\infty$ in \cite[Eq.~(2)]{N69} replaced by the stated condition, which appears in item 1 of the footnote of \cite[p.~51]{N69}.
	\end{proof}
	Below, we will apply the lemma with $\delta = 1-1/\eta$ and $x_n$ of order $n^{\eta} = n^{1/(1-\delta)}$. Then $x_n/n^{1/(2\delta)}$ will be of order $n^{1/(1-\delta) - 1/(2\delta)} \to \infty$ so long as $\delta>1/3$, and this holds because $\eta \geq 11/6$. We conclude that for random variables as in the lemma with $p(x) \sim e^{-x^{1/\eta}}$ and $x_n$ of order $n^{\eta}$, \eqref{eq: xi_lemma} holds.

	To use Lemma~\ref{lem: xi_lemma} to prove \eqref{eq: sum_bound_claim}, we first observe that although the sequence $(Y_{n,0})_{n \geq 2}$ is not independent, it is only 1-dependent, so $(Y_{2n,0})_{n \geq 1}$ consists of independent random variables, as does $(Y_{2n-1,0})_{n \geq 2}$. By splitting into contributions from these subsums, we may assume that the original sequence $(Y_{n,0})_{n \geq 2}$ is independent. Assuming this, by the bound \eqref{eq: Y_bound}, $(Y_{n,0})_{n \geq 2}$ is stochastically dominated by the sequence $(\Cl[lgc]{C: stoch_dom} (1+\xi_n))_{n \geq 2}$, where $\Cr{C: stoch_dom}$ is a positive constant, and the $\xi_i$'s are as in Lemma~\ref{lem: xi_lemma}. Therefore for large $n$, the left side of \eqref{eq: sum_bound_claim} is no larger than
	\begin{align*}
		\mathbb{P}(Y_{2,0} + \dots + Y_{n,0} > \Cr{C: ind_sum}n^\eta - 1) &\leq \mathbb{P}(\xi_2 + \dots + \xi_n > \Cr{C: stoch_dom}^{-1}(\Cr{C: ind_sum}n^\eta -1)-n) \\
		&\leq \mathbb{P}\left(\xi_1 + \dots + \xi_n > \frac{\Cr{C: ind_sum}}{2\Cr{C: stoch_dom}}n^\eta\right).
	\end{align*}
	Applying Lemma~\ref{lem: xi_lemma} with $x_n = n^\eta \Cr{C: ind_sum}/(2\Cr{C: stoch_dom})$, for large $n$ we obtain the upper bound
	\[
	2n\mathbb{P}\left( \xi_1 \geq \frac{\Cr{C: ind_sum}}{2\Cr{C: stoch_dom}}n^\eta\right) \leq 18^{-(n+2)},
	\]
	so long as $C_{39}$ is large enough. This gives \eqref{eq: sum_bound_claim}.

	We now use the consequence \eqref{eq: partial_sum_bound} of \eqref{eq: sum_bound_claim} to apply Lemma~\ref{lem: sum_by_parts} with $\mathsf{a}_k = Y_{k,t}, \mathsf{b}_k = F^{-1}\left( p_{\left\lceil 1.5^k\right\rceil}\right)$, and $\mathsf{c}_k = \Cr{C: ind_sum}C_t k^{\eta-1}$. We conclude that a.s., for all $t \in [0,1]$ and $n \geq 1$,
	\begin{equation}\label{eq: seriously_last}
		\sum_{k=1}^n \left[ F^{-1}\left( p_{\left\lceil 1.5^k\right\rceil}\right) Y_{k,t}\right] \leq \Cr{C: ind_sum}C_t \sum_{k=1}^n \left[ F^{-1}\left( p_{\left\lceil 1.5^k\right\rceil}\right) k^{\eta-1}\right].
	\end{equation}
	Finally \eqref{eq: p_n_bounds} gives for some $\Cl[smc]{c: nachos_bellegrande}>0$
	\[
	F^{-1}\left( p_{\left\lceil 1.5^k\right\rceil}\right) \leq F^{-1}\left( \frac{1}{2} + \frac{1}{2^{\lceil \Cr{c: nachos_bellegrande}k\rceil}}\right) = a_{\lceil \Cr{c: nachos_bellegrande}k\rceil}
	\]
	for all large $k$. Comparing to \eqref{eq: seriously_last}, we see that \eqref{eq: last_to_show} holds as long as $\sum_k (a_{\lceil \Cr{c: nachos_bellegrande}k\rceil}\wedge 1) k^{\eta-1}< \infty$. Recall that our assumption is that $\sum a_k k^{7/8} < \infty$. Because $\eta - 1 = 1-(\alpha-\epsilon)/2$ and $\alpha > 1/4$, there is $\epsilon$ small enough that $\eta-1 < 7/8$. This implies \eqref{eq: last_to_show} and completes the proof.

	\subsection{Proof of Theorem~\ref{thm: Hausdorff}}\label{sec: Hausdorff}
	
	In this section we suppose that $F$ satisfies \eqref{eq: critical_def} and \eqref{eq: infinite_sum}. To prove that the Hausdorff dimension equals $31/36$ a.s., we will prove that
	\begin{equation}\label{eq: Hausdorff_lower}
		\text{dim}_\text{H}(\{t \geq 0 : \rho_t < \infty\}) \geq \frac{31}{36} \text{ a.s.},
	\end{equation}
	and 
	\begin{equation}\label{eq: Hausdorff_upper}
		\text{dim}_\text{H}(\{t \geq 0 : \rho_t < \infty\}) \leq \frac{31}{36} \text{ a.s.}
	\end{equation}

	The lower bound, \eqref{eq: Hausdorff_lower}, follows quickly from the proof of \cite[Thm.~1.4(1)]{GPS10}. Specifically, \cite[p.~92]{GPS10} states ``The above discussion therefore gives $\text{dim}_\text{H}(\mathcal{E})\geq 31/36$ a.s.'' Here, $\mathcal{E}$ is defined at the bottom of \cite[p.~91]{GPS10} as ``the set of exceptional times $t \in [0,\infty)$ for the event that the origin is in an infinite open cluster.'' The term ``infinite open cluster'' corresponds here to an infinite connected set of vertices with weight zero, so a.s. $\text{dim}_\text{H}(\{t \geq 0 : \rho_t = 0\}) = 31/36$ and
	\[
	\text{dim}_\text{H}(\{t \geq 0 : \rho_t < \infty\}) \geq \text{dim}_\text{H}(\{t \geq 0 : \rho_t = 0\}) = \frac{31}{36} \text{ a.s.}
	\]
	In other words, \eqref{eq: Hausdorff_lower} holds.
	
	To prove \eqref{eq: Hausdorff_upper}, we note that by time stationarity and countable stability, it suffices to prove that
	\begin{equation}\label{eq: reduced_hausdorff_upper_bound}
		\text{dim}_\text{H}(\{t \in [0,1] : \rho_t < \infty\}) \leq \frac{31}{36} \text{ a.s.}
	\end{equation}
	We will build an explicit cover using an integer $L>1$, which we take large later in the proof. For $k \geq 1$, let $A_k(t)$ denote the event that there is a circuit around 0 in the annulus $\text{Ann}(L^{k-1},L^k)$ such that each of its vertices $v$ satisfy $\omega_v(t) \geq q_k > 1/2,$ where
	\[
	q_k = \frac{1}{2} + \frac{1}{2}\left( p_{L^k} - \frac{1}{2}\right).
	\]
	We observe that every infinite, self-avoiding path starting at 0 must cross each of these annuli, and so by \eqref{eq: rho_equivalent},
	\begin{equation}\label{eq: rho_lower_bound}
		\sum_{k=1}^\infty F^{-1}(q_k) \mathbf{1}_{A_k(t)} \leq \rho_t.
	\end{equation}
	We claim that for any $t$ and any outcome,
	\begin{equation}\label{eq: density}
		\rho_t < \infty \Rightarrow \liminf_{n \to \infty} \frac{1}{n} \sum_{k=1}^n \mathbf{1}_{A_k(t)} = 0.
	\end{equation}
	We argue by contrapositive. Assume that for some $\Cl[smc]{c: contradiction}>0$, we have $\sum_{k=1}^n \mathbf{1}_{A_k(t)} \geq \Cr{c: contradiction}n$ if $n$ is larger than some $n_0$. Then set $\mathsf{c}_k = \mathbf{1}_{A_k(t)}$ if $k \geq n_0$ and 1 otherwise, so that $\sum_{k=1}^n \mathsf{c}_k \geq \Cr{c: contradiction}n$ for all $n \geq 1$. Lemma~\ref{lem: sum_by_parts} with $\mathsf{a}_k = \Cr{c: contradiction}$ and $\mathsf{b}_k = F^{-1}(q_k)$ implies that for all $n \geq 1$,
	\begin{equation}\label{eq: sum_lower_bound}
		\Cr{c: contradiction} \sum_{k=1}^n F^{-1}(q_k) \leq \sum_{k=1}^n \mathsf{c}_k F^{-1}(q_k).
	\end{equation}
	By \eqref{eq: p_n_bounds}, we have for some $\epsilon_2>0$, $F^{-1}(q_k)   \geq a_{\lfloor \epsilon_2 k\rfloor}$ if $k$ is large. Because we have assumed \eqref{eq: infinite_sum}, this is not summable, and therefore \eqref{eq: sum_lower_bound} implies that $\sum \mathsf{c}_k F^{-1}(q_k)$ diverges. By \eqref{eq: rho_lower_bound}, $\rho_t = \infty$, and this shows \eqref{eq: density}.
	
	To bound the dimension of the set of exceptional times, we will cover the set of $t$ for which the right side of \eqref{eq: density} holds. Our cover will consist of intervals of different sizes. Let 
	\begin{equation}\label{eq: delta_k_def}
		\Delta_k = \frac{p_{L^k}-\frac{1}{2}}{2}
	\end{equation}
	and let $\Pi_k$ be the collection of half-open intervals $I$ of the form
	\[
	I = [ih,(i+1)h) \text{ for } i=0, \dots, \lceil \Delta_k^{-1}\rceil -1
	\]
	and $h= 1/\lceil 1/\Delta_k\rceil$. To decide which of these intervals to include in our covering, we define auxiliary weights $\sigma_v^k(t)$ for vertices $v$ and $t \in [0,1)$, $k \geq 1$ by $\sigma_v^k(t) = 1$ if $\omega_v(t) \geq q_k$ and $\mathfrak{s}_v$ does not increment on the interval $I$, where $I$ is the unique element of $\Pi_k$ that contains $t$. Otherwise, we set $\sigma_v^k(t) = 0$. Let $B_k(t)$ be the event that there is a path connecting the inner and outer boundaries of $\text{Ann}(L^{k-1},L^k)$ whose vertices $v$ satisfy $\sigma_v^k(t) = 0$. We observe that $A_k^c(t) \subset B_k(t) \text{ for } t \in [0,1) \text{ and } k \geq 1$. Indeed, if $B_k(t)$ does not occur, then by duality, there is a circuit around 0 in $\text{Ann}(L^{k-1},L^k)$ whose vertices $v$ satisfy $\sigma_v^k(t) = 1$, and therefore $\omega_v(t) \geq q_k$, implying occurrence of $A_k(t)$. Because of this,
	\begin{equation}\label{eq: w_implication}
		\liminf_{n \to \infty} \frac{1}{n} \sum_{k=1}^n \mathbf{1}_{A_k(t)} = 0 \Rightarrow \limsup_{n \to \infty} \overline{W}_n(t) = 1,
	\end{equation}
	where $\overline{W}_n(t) = (1/n) \sum_{k=1}^n \mathbf{1}_{B_k(t)}$. Therefore if $x \in (0,1)$ is fixed, and the left side of \eqref{eq: w_implication} holds for some $t \in [0,1)$, then $\overline{W}_n(t) \geq x$ for infinitely many $n$. This motivates our covering: we set
	\[
	\mathfrak{C}_N(x) = \left\{ I \in \Pi_n : n \geq N, \overline{W}_n(t) \geq x \text{ for some } t \in I\right\},
	\]
	so that by \eqref{eq: density} and \eqref{eq: w_implication},
	\[
	\text{for any } N \text{ and } x \in (0,1),~\{ t \in [0,1) : \rho_t < \infty\} \subset \bigcup_{I \in \mathfrak{C}_N(x)} I .
	\]
	Using the definition of Hausdorff outer measure in \eqref{eq: hausdorff_definition} and the fact that $\{1\}$ has diameter 0, for any $\alpha \in (0,1)$,
	\begin{equation}\label{eq: cover_containment}
		\mathcal{H}_\alpha\left( \{t \in [0,1] : \rho_t < \infty\}\right) \leq \liminf_{N \to \infty} \sum_{I \in \mathfrak{C}_N(x)} (\text{diam}~I)^\alpha.
	\end{equation}
	
	To bound the right side of \eqref{eq: cover_containment}, we note that the weights $\sigma_v^k(t)$ are constant for $t$ in any interval in $\Pi_k$, so
	\[
	\sum_{I \in \mathfrak{C}_N(x)} \left( \text{diam}~I\right)^\alpha = \sum_{n \geq N} \sum_{i=0}^{\lceil \Delta_n^{-1}\rceil -1} \lceil\Delta_n^{-1}\rceil^{-\alpha} \mathbf{1}_{\{\overline{W}_n(i\lceil \Delta_n^{-1}\rceil^{-1}) \geq x\}},
	\]
	and
	\begin{equation}\label{eq: expected_covering_bound}
		\mathbb{E}\sum_{I \in \mathfrak{C}_N(x)} \left( \text{diam}~I\right)^\alpha = \sum_{n \geq N} \lceil \Delta_n^{-1}\rceil^{1-\alpha} \mathbb{P}(\overline{W}_n(0) \geq x).
	\end{equation}
	For any vertex $v$, and integer $k \geq 1$,
	\[
	\mathbb{P}(\sigma_v^k(0) = 0) = 1-(1-q_k)e^{-\lceil \Delta_k^{-1}\rceil^{-1}} \leq q_k + \lceil \Delta_k^{-1}\rceil^{-1} \leq p_{L^k},
	\]
	so from \eqref{eq: near_critical_arm_event_comparability}, for all $L,k \geq 1$,
	\[
	\mathbb{P}(B_k(0)) \leq \Cr{C: change_p} \pi_1(L^{k-1},L^k).
	\]
	Using quasimultiplicativity from \eqref{eq: quasimultiplicativity} and $\alpha_1 = 5/48$ from \eqref{eq: arm_exponents_1_2_4}, given any $\delta_1 < 5/48$, there exists $\Cl[lgc]{C: local_arm}>0$ such that for all $L,k$, $\pi_1(L^{k-1},L^k) \leq \Cr{C: local_arm}L^{-\delta_1}$. Therefore, calling 
	\begin{equation}\label{eq: p_def}
		p = p(L) = \Cr{C: change_p}\Cr{C: local_arm}L^{-\delta_1},
	\end{equation}
	we have $\mathbb{P}(B_k(0)) \leq p \text{ for all } k,L \geq 1$, and so
	\begin{equation}\label{eq: W_domination}
		\mathbb{P}\left( \overline{W}_n(0) \geq x\right) \leq \mathbb{P}(X_1 + \dots + X_n \geq nx),
	\end{equation}
	where $X_1, \dots, X_n$ are i.i.d.~random variables having the Bernoulli distribution with parameter $p$. Standard large deviation results dictate that if we define the rate function 
	\begin{equation}\label{eq: rate_function}
		I(x) = I(x,p) = x\log \frac{x}{p} + (1-x)\log \frac{1-x}{1-p},
	\end{equation}
	then for $x \geq p$,
	\[
	\mathbb{P}(X_1 + \dots + X_n \geq nx) \leq e^{-I(x)n}.
	\]
	Putting this in \eqref{eq: W_domination} and then back in \eqref{eq: expected_covering_bound}, we obtain
	\[
	\mathbb{E}\sum_{I \in \mathfrak{C}_N(x)} \left( \text{diam}~I\right)^\alpha \leq 2^{1-\alpha} \sum_{n \geq N} \Delta_n^{\alpha-1} e^{-I(x)n} = 2^{1-\alpha}\sum_{n \geq N} \left( \frac{p_{L^n}-\frac{1}{2}}{2}\right)^{\alpha-1} e^{-I(x)n}.
	\]
	By \eqref{eq: p_n_bounds}, if $\epsilon_1>3/4$, the sum is at most $4^{1-\alpha} \sum_{n \geq N} e^{-nI(x)}L^{n \epsilon_1 (1-\alpha)}$ for large $N$. If $I(x) > \epsilon_1(1-\alpha)\log L$, then this converges to 0 as $N \to \infty$, so we apply Fatou's lemma in \eqref{eq: cover_containment} to deduce that
	\[
	\alpha > 1 - \frac{I(x)}{\epsilon_1 \log L} \Rightarrow \mathbb{E}\mathcal{H}_\alpha(\{t \in [0,1] : \rho_t < \infty\}) = 0.
	\]
	If we let $x\uparrow 1$, we see in \eqref{eq: rate_function} that $I(x) \to \log(1/p)$, and so this implies
	\[
	\alpha > 1- \frac{\log(1/p)}{\epsilon_1 \log L} \Rightarrow \mathcal{H}_\alpha(\{t \in [0,1] : \rho_t < \infty\}) = 0 \text{ a.s.}
	\]
	The definition of $p$ in \eqref{eq: p_def} along with the definition of Hausdorff dimension gives
	\[
	\textnormal{dim}_\textnormal{H}(\{t \in [0,1] : \rho_t < \infty\}) \leq  1 + \frac{\log \Cr{C: change_p}\Cr{C: local_arm} - \delta_1 \log L}{\epsilon_1 \log L} \text{ a.s.}
	\]
	Letting $L \to \infty$, we obtain an upper bound of $1-\delta_1/\epsilon_1$. If we take $\delta_1 \to 5/48$ and $\epsilon_1 \to 3/4$, this shows \eqref{eq: reduced_hausdorff_upper_bound} and completes the proof of \eqref{eq: Hausdorff_upper}. Since we have established both the lower bound \eqref{eq: Hausdorff_lower} and the upper bound \eqref{eq: Hausdorff_upper}, this finishes the proof of Theorem~\ref{thm: Hausdorff}.

	\subsection{Proof of Theorem~\ref{thm: Minkowski}}\label{sec: Minkowski}
	
	The proof of Theorem~\ref{thm: Minkowski} will be split into two parts. Throughout, we assume that $F$ satisfies \eqref{eq: critical_def} and \eqref{eq: infinite_sum}.
	
	\subsubsection{Theorem~\ref{thm: Minkowski}(1)}
	
	We begin the lower bound in the first item of Theorem~\ref{thm: Minkowski}. It again uses \cite{GPS10}, but now the fact that on the event $\{\exists t \in [0,1] : \rho_t = 0\}$, a.s.~one has $\text{dim}_\text{H}(\{t \in [0,1] : \rho_t = 0\}) \geq 31/36.$ This follows from the discussion at the bottom of \cite[p.~91]{GPS10} along with the statement on \cite[p.~92]{GPS10} that $\sup_R M_\gamma(R) < \infty$ for any $\gamma < 31/36$. Using time stationarity, we deduce the same claim for general intervals: if $s\geq 1$, then on the event $\{\exists t \in [0,s] : \rho_t = 0\}$, one has $\text{dim}_\text{H}(\{t \in [0,s] : \rho_t = 0\}) \geq 31/36$.
	
	Because $\{t \geq 0 : \rho_t = 0\}$ is a.s.~nonempty (its Hausdorff dimension is a.s.~31/36), we can, given $\delta>0$, choose $s\geq 1$ such that
	\[
	\mathbb{P}\left( \{t \in [0,s] : \rho_t = 0\} \neq \emptyset\right) > 1-\delta.
	\]
	Therefore for any $x \in [0,\infty)$,
	\begin{align*}
		&\mathbb{P}\left( \textnormal{dim}^\textnormal{M}(\{t \in [0,s] : \rho_t \leq x\}) \geq \frac{31}{36}\right) \\
		\geq~& (1-\delta) \mathbb{P}\left( \textnormal{dim}_\textnormal{H}\left( \{t \in [0,s] : \rho_t = 0\}\right) \geq \frac{31}{36} ~\bigg|~ \exists t \in [0,s] : \rho_t = 0 \right) = 1-\delta,
	\end{align*}
	and so we obtain one half of our desired result: for any $x \in [0,\infty)$,
	\begin{equation}\label{eq: minkowski_upper_lower}
		\lim_{s \to \infty} \mathbb{P}\left( \textnormal{dim}^\textnormal{M}(\{t \in [0,s] : \rho_t \leq x\}) \geq \frac{31}{36}\right) = 1.
	\end{equation}
	
	To prove the upper bound, we assume that $ka_k \to \infty$ and apply a covering argument inspired by the proof of \cite[Thm.~6.3]{SS10}, which shows that the Hausdorff dimension of the set of exceptional times in dynamical percolation is at most $31/36$. We give the main estimate as a lemma. 
	\begin{Lemma}\label{lem: expectation_covering_upper}
		There exists $\Cl[lgc]{C: covering}>0$ such that the following holds. For any $\epsilon>0$, $x,s \in [0,\infty)$, and $n\geq 1$ such that
		\begin{equation}\label{eq: n_condition}
			2^n \leq  L\left( 1-e^{-\epsilon}\left(\frac{1}{2} - \epsilon\right)\right), 
		\end{equation}
		the expected covering number satisfies
		\[
		\mathbb{E} N(\{t \in [0,s] : \rho_t \leq x\}, \epsilon) \leq  \Cr{C: covering}^{y+1} \left\lceil \frac{s}{\epsilon} \right\rceil \binom{n}{y}  \pi_1(2^n),
		\]
		where
		\begin{equation}\label{eq: s_def}
			y = \min\left\{ \left\lfloor \frac{x}{F^{-1}\left(\frac{1}{2} + \epsilon\right)} \right\rfloor, n\right\}.
		\end{equation}
	\end{Lemma}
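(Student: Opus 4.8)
The plan is to cover $E:=\{t\in[0,s]:\rho_t\le x\}$ by the $\left\lceil\frac{s}{\epsilon}\right\rceil$ intervals $[i\epsilon,(i+1)\epsilon]$, each of diameter $\epsilon$, so that $N(E,\epsilon)$ is bounded by the number of these intervals meeting $E$; by time-stationarity,
\[
\mathbb{E}\,N(E,\epsilon)\le\left\lceil\tfrac{s}{\epsilon}\right\rceil\,\mathbb{P}\bigl(\exists\,t\in[0,\epsilon]:\rho_t\le x\bigr),
\]
and it remains to bound the right-hand probability; write $I=[0,\epsilon]$. The device for handling all of $I$ at once is to call a vertex $v$ \emph{$I$-good} if $\omega_v(0)\le\frac12+\epsilon$ or the Poisson clock $\mathfrak{s}_v$ rings during $I$. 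Since the initial weights and the clocks are independent, $I$-goodness is i.i.d.\ over $v$ with $\mathbb{P}(v\text{ is }I\text{-good})=p:=1-e^{-\epsilon}(\frac12-\epsilon)$, the value in \eqref{eq: n_condition}; and if the clock of $v$ does not ring on $I$ then $\omega_v$ is constant on $I$, so for every $t\in I$ we have the implication $\tau_v(t)<F^{-1}(\frac12+\epsilon)\Rightarrow\omega_v(t)<\frac12+\epsilon\Rightarrow v\text{ is }I\text{-good}$.

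Next I would show $\{\exists\,t\in I:\rho_t\le x\}\subseteq\bigcup_{|S|\le y}D_S$. If $\rho_{t^*}\le x$ for some $t^*\in I$, then (as $T_{t^*}(0,\partial B(m))$ increases to $\rho_{t^*}$) there is a self-avoiding path $\gamma$ from $0$ to $\partial B(2^n)$ with $T_{t^*}(\gamma)\le x$. The annuli $\text{Ann}(2^{k-1},2^k)$, $1\le k\le n$, are disjoint and contain no vertex of $B(1)$, so the total $\tau_{t^*}$-weight of $\gamma$ inside them is at most $T_{t^*}(\gamma)\le x$; hence the set $S$ of indices $k$ for which $\gamma$'s weight inside $\text{Ann}(2^{k-1},2^k)$ is $\ge F^{-1}(\frac12+\epsilon)$ satisfies $|S|\le x/F^{-1}(\frac12+\epsilon)$ and $|S|\le n$, i.e.\ $|S|\le y$ (note $F^{-1}(\frac12+\epsilon)>0$ by \eqref{eq: critical_def}). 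For $k\notin S$, every vertex of $\gamma$ in $\text{Ann}(2^{k-1},2^k)$ has $\tau_v(t^*)<F^{-1}(\frac12+\epsilon)$, hence is $I$-good, so $D_S$ occurs, where $D_S$ denotes the event that $0$ is connected to $\partial B(2^n)$ by a path whose vertices in $\bigcup_{k\notin S}\text{Ann}(2^{k-1},2^k)$ are all $I$-good.

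The core estimate is $\mathbb{P}(D_S)\le\mathsf{C}^{\,|S|+1}\pi_1(2^n)$ for an absolute $\mathsf{C}$. The set $S$ breaks $\{1,\dots,n\}$ into at most $2|S|+1$ maximal runs, alternating between at most $|S|+1$ runs $R_1,\dots,R_\ell$ disjoint from $S$ and at most $|S|$ runs contained in $S$; on $D_S$ the connecting path crosses $\text{Ann}(2^{a_j-1},2^{b_j})$ through $I$-good vertices for each $R_j=\{a_j,\dots,b_j\}$, and since these annuli are disjoint the crossing events are independent, so $\mathbb{P}(D_S)\le\prod_{j=1}^{\ell}\pi_1(p;2^{a_j-1},2^{b_j})$. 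Because $2^n\le L(p)$ by \eqref{eq: n_condition}, near-critical stability \eqref{eq: near_critical_arm_event_comparability} and quasimultiplicativity \eqref{eq: quasimultiplicativity} hold throughout $B(2^n)$. Writing $\prod_j\pi_1(p;R_j)$ as the product of $\pi_1(p;\cdot)$ over \emph{all} $\le2|S|+1$ runs divided by the product over the runs contained in $S$, I would bound the numerator using the lower bound in \eqref{eq: quasimultiplicativity} applied the $\le2|S|$ times needed to assemble all runs into $\text{Ann}(1,2^n)$, obtaining $\le\mathsf{C}_1^{\,|S|+1}\pi_1(2^n)$; and each run inside $S$, of length $m$, contributes to the denominator a factor $\ge\mathsf{c}_2\,2^{-(\alpha_1+\epsilon')m}$ by \eqref{eq: near_critical_arm_event_comparability}, \eqref{eq: annulus_arm_probability}, and $\alpha_1=5/48$, making the denominator $\ge\mathsf{c}_3^{\,|S|}$. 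Combining gives the core estimate, and summing over the $\sum_{j\le y}\binom{n}{j}\le4^y\binom{n}{y}$ admissible sets $S$ in the displayed reduction yields
\[
\mathbb{E}\,N(E,\epsilon)\le\left\lceil\tfrac{s}{\epsilon}\right\rceil(4\mathsf{C})^{\,y+1}\binom{n}{y}\pi_1(2^n),
\]
which is the assertion with $C_{\text{covering}}=4\mathsf{C}$.

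I expect the main obstacle to be the arm-probability accounting just described: one must ensure the multiplicative constants collected depend only on $|S|$ (hence on $y$) and not on $n$, which forces quasimultiplicativity to be used only $O(|S|)$ times --- across runs, not scale by scale --- and it is precisely to make $p$-arm events comparable to critical ones on all of $B(2^n)$ that the hypothesis $2^n\le L(p)$ in \eqref{eq: n_condition} is required.
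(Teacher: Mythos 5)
Your proof is correct and follows essentially the same route as the paper's: cover $[0,s]$ by $\epsilon$-intervals and use stationarity; introduce an auxiliary static percolation (your $I$-good vertices are exactly the paper's $\sigma_v=0$ vertices); observe that a low-weight path to $\partial B(2^n)$ passes through at most $y$ annuli containing a non-$I$-good vertex; and then estimate via independence over disjoint annuli, near-critical stability under \eqref{eq: n_condition}, and $O(y)$ applications of quasimultiplicativity. Your bookkeeping via maximal runs of $S$ is cosmetically different from the paper's choice of a sequence $1=k_0\le k_1<\cdots<k_y\le k_{y+1}=n$, but it is the same decomposition, and both yield the $C^{y+1}\binom{n}{y}\pi_1(2^n)$ bound.
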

	\begin{proof}
		Pick $\epsilon,x,s,n$ satisfying the conditions in the statement of the lemma. By covering our set with intervals of length $\epsilon$, we see that 
		\begin{align*}
			&N(\{t \in [0,s] : \rho_t \leq x\}, \epsilon) \\
			\leq&N(\{t \in [0,s] : T_t(0,\partial B(2^n)) \leq x\}, \epsilon) \\
			\leq~& \#\left\{i = 1, \dots, \left\lceil \frac{s}{\epsilon}\right\rceil : \exists t \in [(i-1)\epsilon,i\epsilon] \text{ for which } T_t(0,\partial B(2^n)) \leq x\right\}.
		\end{align*}
		Taking expectation and using time stationarity of our process, we obtain
		\begin{equation}\label{eq: expectation_mink_bound}
			\mathbb{E}N(\{t \in [0,s] : \rho_t \leq x\}, \epsilon) \leq \left\lceil \frac{s}{\epsilon} \right\rceil \mathbb{P}\left(\exists t \in [0,\epsilon] \text{ for which } T_t (0,\partial B(2^n)) \leq x\right).
		\end{equation}
		
		To bound the probability on the right of \eqref{eq: expectation_mink_bound}, we introduce an auxiliary percolation process. For $v \in \mathbb{Z}^2$, define
		\[
		\sigma_v = \begin{cases}
			1 & \quad \text{if } \omega_v(0) \geq \frac{1}{2} +\epsilon \text{ and } \mathfrak{s}_v \text{ does not increment in } [0,\epsilon) \\
			0 &\quad \text{otherwise}.
		\end{cases}
		\]
		The vertices $v$ with $\sigma_v = 1$ are those whose weights are initially sufficiently large and also do not update until at least time $\epsilon$. The family $(\sigma_v)_{v \in \mathbb{Z}^2}$ is i.i.d. and satisfies
		\begin{equation*}
			\mathbb{P}(\sigma_v = 1) =  e^{-\epsilon} \left( \frac{1}{2} - \epsilon\right) = 1 - \mathbb{P}(\sigma_v = 0).
		\end{equation*}
		The important property of the variables $\sigma_v$ is that if there is a $t \in [0,\epsilon]$ such that $T_t (0,\partial B(2^n)) \leq x$ (from the right side of \eqref{eq: expectation_mink_bound}), then there must be a path $\gamma$ connecting $0$ and $\partial B(2^n)$ such that 
		\begin{equation}\label{eq: sigma_part}
			\#\{v \in \gamma : \sigma_v = 1, v \neq 0\} \leq \frac{x}{F^{-1}\left( \frac{1}{2} + \epsilon\right)}.
		\end{equation}
		Indeed, if there is such a $t$, then there is a path $\gamma$ connecting $0$ and $\partial B(2^n)$ such that $\sum_{v \in \gamma, v \neq 0} \tau_v(t) \leq x$. But because any $v$ with $\sigma_v = 1$ has $\tau_v(0) = \tau_v(t)$, we find 
		\[
		x \geq \sum_{v \in \gamma, v \neq 0} \tau_v(t) \sigma_v = \sum_{v \in \gamma, v \neq 0} \tau_v(0) \sigma_v \geq F^{-1}\left( \frac{1}{2} + \epsilon\right) \sum_{v \in \gamma, v \neq 0} \sigma_v.
		\]
		The $\sigma$ variables are $0/1$-valued, so this implies \eqref{eq: sigma_part}. In other words,
		\begin{equation}\label{eq: split_part}
			\mathbb{P}\left( \exists t \in [0,\epsilon] \text{ for which } T_t(0,\partial B(2^n)) \leq x\right) \leq \mathbb{P}\left( \exists \gamma : 0 \to \partial B(2^n) \text{ satisfying }\eqref{eq: sigma_part} \right).
		\end{equation}
		
		Now we use percolation tools to bound the right side of \eqref{eq: split_part}. Such a path $\gamma$ has at most $x/F^{-1}(1/2 + \epsilon)$ many nonzero vertices with $\sigma$-weight equal to 1, and each of these vertices must be in an annulus of the form $\text{Ann}(2^{j-1},2^j)$ for some integer $j$ satisfying $0 \leq j \leq n$. Because all other vertices on $\gamma$ have sigma-weight 0, we can therefore find a sequence $(k_j)_{j=0}^{y+1}$ with
		\[
		y \leq \frac{x}{F^{-1}\left( \frac{1}{2} + \epsilon\right)} \text{ and } 1 = k_0 \leq k_1 < k_2 < \dots < k_y \leq k_{y+1} = n
		\]
		such that for each $\ell= 0, \dots, y$, $B(2^{k_\ell})$ is connected to $\partial B(2^{k_{\ell+1}-1})$ by a path whose vertices $v$ have $\sigma_v=0$. (If $k_\ell = k_{\ell+1}$, as might be the case for $\ell=0$ or $\ell = y$, we use the convention throughout that the corresponding connection always exists.) In fact, this statement remains true if we assume that $y$ takes the value \eqref{eq: s_def}. So, by a union bound and independence, we find
		\begin{equation}\label{eq: split_part_2}
			\mathbb{P}\left( \exists \gamma : 0 \to \partial B(2^n) \text{ satisfying }\eqref{eq: sigma_part}\right)\leq \sum_{1 \leq k_1 < \cdots < k_y \leq n} \prod_{\ell=0}^{y} \pi_1\left( 1-e^{-\epsilon}\left( \frac{1}{2}-\epsilon\right); 2^{k_\ell}, 2^{k_{\ell+1}-1}\right).
		\end{equation}
		By \eqref{eq: near_critical_arm_event_comparability} and \eqref{eq: n_condition}, we can bound each factor on the right of \eqref{eq: split_part_2} by $\Cr{C: change_p}\pi_1(2^{k_\ell}, 2^{k_{\ell+1}-1})$. Because we can find $\Cl[smc]{c: annulus}>0$ such that for all $m \geq 0$, one has $\pi_1(2^m,2^{m+1}) \geq \Cr{c: annulus}$ (see \eqref{eq: near_critical_crossing}), the right side of \eqref{eq: split_part_2} is bounded as
		\begin{align}
			&\sum_{1 \leq k_1 < \cdots < k_y \leq n} \prod_{\ell=0}^{y} \pi_1\left( 1-e^{-\epsilon}\left( \frac{1}{2} - \epsilon\right); 2^{k_\ell}, 2^{k_{\ell+1}-1}\right) \nonumber \\
			\leq~& \left( \frac{\Cr{C: change_p}}{\Cr{c: annulus}}\right)^{y+1} \sum_{1 \leq k_1 < \cdots < k_y \leq n} \left[ \prod_{\ell=0}^{y} \pi_1\left( 2^{k_\ell}, 2^{k_{\ell+1}-1}\right) \pi_1 \left(2^{k_{\ell+1}-1}, 2^{k_{\ell+1}}\right)\right]. \label{eq: apply_QM}
		\end{align}
		We apply quasimultiplicativity from \eqref{eq: quasimultiplicativity} in the right side $2y+2$ many times to obtain
		\begin{align*}
			\sum_{1 \leq k_1 < \cdots < k_y \leq n} \prod_{\ell=0}^{y} \pi_1\left( 1-e^{-\epsilon}\left( \frac{1}{2} - \epsilon\right); 2^{k_\ell}, 2^{k_{\ell+1}-1}\right)  &\leq  \left( \frac{\Cr{C: change_p}}{\Cr{c: annulus} \Cr{c: c_quasi}^2}\right)^{y+1} \sum_{1 \leq k_1 < \cdots < k_y \leq n} \pi_1\left(2,2^n\right).\\
			&\leq \pi_1\left( 2,2^n\right) \binom{n}{y} \left( \frac{\Cr{C: change_p}}{\Cr{c: annulus} \Cr{c: c_quasi}^2}\right)^{y+1} \\
			&\leq \Cr{C: covering}^{y+1} \binom{n}{y} \pi_1(2^n).
		\end{align*}
		We place this bound in \eqref{eq: split_part_2}, and then go back through \eqref{eq: split_part} and \eqref{eq: expectation_mink_bound} to arrive at the statement of Lemma~\ref{lem: expectation_covering_upper}.
	\end{proof}

	Returning to the upper bound for the Minkowski dimension, we let $x$ and $s$ in $[0,\infty)$, with a fixed $\delta>0$, and we choose $\Cl[lgc]{C: x_inequality}$ so large that 
	\begin{equation}\label{eq: x_choice}
		x \leq \frac{\Cr{C: x_inequality}}{2} \left( \frac{4}{3} - \frac{\delta}{2} \right).
	\end{equation}
	Because we have assumed $ka_k \to \infty$, we have $ka_k \geq \Cr{C: x_inequality}$ for all large $k$. For such $k$, we make the choices
	\[
	\epsilon = 2^{-k},~n = \left\lceil \left( \frac{4}{3} - \frac{\delta}{2} \right)k \right\rceil
	\]
	for use in Lemma~\ref{lem: expectation_covering_upper}. One can check that \eqref{eq: n_condition} holds for large $k$ and the parameter $y$ in \eqref{eq: s_def} satisfies
	\[
	y \leq \min\left\{ \frac{x}{a_k}, \left\lceil k \left( \frac{4}{3} - \frac{\delta}{2}\right)\right\rceil \right\} \leq \min\left\{ \frac{x}{\Cr{C: x_inequality}}k, \left\lceil k \left( \frac{4}{3} - \frac{\delta}{2}\right)\right\rceil\right\} = \frac{x}{\Cr{C: x_inequality}}k,
	\]
	which is $\leq n/2$. This implies that if $k$ is large,
	\[
	\binom{n}{y} \leq \binom{n}{\left\lfloor \frac{x}{\Cr{C: x_inequality}} k\right\rfloor} \leq \left( \frac{e\left\lceil \left( \frac{4}{3} - \frac{\delta}{2}\right)k \right\rceil}{\left\lfloor \frac{x}{\Cr{C: x_inequality}} k \right\rfloor}\right)^{\left\lfloor \frac{x}{\Cr{C: x_inequality}} k \right\rfloor}\leq \left( \frac{4e \Cr{C: x_inequality} \left( \frac{4}{3} - \frac{\delta}{2} \right)}{x}\right)^{\frac{x}{\Cr{C: x_inequality}}k}.
	\]
	Plugging these into Lemma~\ref{lem: expectation_covering_upper}, we obtain for large $k$
	\[
	\mathbb{E} N(\{t \in [0,s] : \rho_t \leq x\}, 2^{-k}) \leq  \Cr{C: covering}^{\frac{x}{\Cr{C: x_inequality}}k+1} \left\lceil s2^k \right\rceil \left( \frac{4e \Cr{C: x_inequality} \left( \frac{4}{3} - \frac{\delta}{2} \right)}{x}\right)^{\frac{x}{\Cr{C: x_inequality}}k}  \pi_1\left(2^{\left( \frac{4}{3} - \frac{\delta}{2} \right) k}\right).
	\]
	We choose $C_6$ so large that this implies
	\begin{equation}\label{eq: almost_end}
		\mathbb{E} N(\{t \in [0,s] : \rho_t \leq x\}, 2^{-k}) \leq \left\lceil s2^k\right\rceil 2^{\frac{\delta}{2} k} \pi_1\left( 2^{\left( \frac{4}{3} - \frac{\delta}{2}\right)k}\right)
	\end{equation}
	whenever $k$ is large enough. Use the value $5/48$ of the one-arm exponent from \eqref{eq: arm_exponents_1_2_4} to obtain for large $k$
	\begin{equation}\label{eq: upper_end}
		\mathbb{E} N(\{t \in [0,s] : \rho_t \leq x\}, 2^{-k}) \leq \left\lceil s2^k\right\rceil 2^{-\left(\frac{5}{36}- \frac{3\delta}{4}\right)k}.
	\end{equation}
	
	Markov's inequality and the Borel-Cantelli lemma give a.s.
	\[
	N(\{t \in [0,s] : \rho_t \leq x\}, 2^{-k}) \leq \left\lceil s2^k\right\rceil 2^{-\left(\frac{5}{36}- \delta\right)k} \text{ for all large }k.
	\]
	This implies that a.s., $\textnormal{dim}^\textnormal{M}(\{t \in [0,s] : \rho_t \leq x\}) \leq 31/36+\delta$ (and by taking $\delta \downarrow 0$ through a countable set we get the upper bound $31/36$), and therefore
	\begin{equation}\label{eq: end_upper_bound}
		\lim_{s \to \infty} \mathbb{P}\left( \textnormal{dim}^\textnormal{M}(\{t \in [0,s] : \rho_t \leq x\}) \leq \frac{31}{36}\right) = 1.
	\end{equation}
	We combine this with \eqref{eq: minkowski_upper_lower} to complete the proof of Theorem~\ref{thm: Minkowski}(1).

	\subsubsection{Theorem~\ref{thm: Minkowski}(2)}

	The proof of Theorem~\ref{thm: Minkowski}(2) will be split into four steps. 
	
	\paragraph{Step 1:} Constructing many time intervals in which $T_t(0,\partial B(2^n)) = 0$.
	
	To give a lower bound on the set of exceptional times, we must show there are many times $t$ at which $\rho_t$ is small. First, we construct many times at which $T_t(0,\partial B(2^n))=0$, for some large $n$, and later we connect these boxes to infinity by low-weight paths. To glue connections together properly, we will need to ensure $T_t(0,\partial B(2^n))=0$ for all $t$ in a small interval.
	
	Define the event $A_t = A_t(n,M)$ for $t \in [0,1]$ and $n,M\geq 1$ by the existence of a circuit $\mathcal{C}$ around 0 in $\text{Ann}(2^n,2^{n+1})$ and a path $\gamma$ connecting $0$ to $\mathcal{C}$ in the interior of $\mathcal{C}$ such that 
	\begin{enumerate}
		\item $T_t(\gamma) = T_t(\mathcal{C}) = 0$, and 
		\item for each vertex $v \in \gamma \cup \mathcal{C}$, the Poisson process $\mathfrak{s}_v$ does not increment in the interval $[t,t+1/M)$.
	\end{enumerate}
	Also let
	\[
	\mathfrak{N}(n,M) = \#\left\{i = 0, \dots, M-1 : A_{\frac{i}{M}} \text{ occurs}\right\}.
	\]
	We will show that there is $\Cl[smc]{c: second_moment}>0$ such that for $n,M \geq 1$ satisfying
	\begin{equation}\label{eq: step_1_conditions}
		L\left( \frac{1}{2}e^{-\frac{1}{M}}\right) \geq 2^{n+1} \text{ and } M\pi_1(2^{n+1}) \geq 1,
	\end{equation}
	one has
	\begin{equation}\label{eq: step_1}
		\mathbb{P}\left( \mathfrak{N}(n,M) \geq \Cr{c: second_moment} M \pi_1(2^{n+1})\right) \geq \Cr{c: second_moment}.
	\end{equation}
	
	To show \eqref{eq: step_1}, we apply the second moment method, using another auxiliary percolation process. For $v \in \mathbb{Z}^2$, set
	\[
	\sigma_v = \begin{cases}
		1 &\quad\text{if } \tau_v(0)  > 0 \text{ or } \mathfrak{s}_v \text{ increments in } [ 0,\frac{1}{M}) \\
		0 &\quad\text{otherwise.}
	\end{cases}
	\]
	Then the $\sigma_v$'s are i.i.d.~and satisfy
	\[
	\mathbb{P}(\sigma_v=0) = \frac{1}{2}e^{-\frac{1}{M}} = 1 - \mathbb{P}(\sigma_v=1).
	\]
	Using the FKG inequality,
	\begin{align*}
		\mathbb{E}\mathfrak{N}(n,M) =  M\mathbb{P}(A_0) &\geq M\mathbb{P}(0 \to \partial B(2^{n+1}) \text{ by a path of vertices }v \text{ with } \sigma_v=0) \\
		&\times \mathbb{P}(\text{some circuit around 0 in Ann}(2^n,2^{n+1}) \text{ has all vertices }v \text{ with } \sigma_v=0)
	\end{align*}
	Equation \eqref{eq: near_critical_crossing} implies that, under the assumption $L(e^{-1/M}/2) \geq 2^{n+1}$, the first probability factor on the right is bounded below by $\Cl[smc]{c: hat_head}\pi_1(2^{n+1})$, and the second factor is bounded below by $\Cr{c: hat_head}$. Therefore
	\begin{equation}\label{eq: first_moment}
		\mathbb{E}\mathfrak{N}(n,M) \geq \Cr{c: hat_head}^2 M\pi_1(2^{n+1}) \text{ if } L\left( \frac{1}{2} e^{-\frac{1}{M}}\right) \geq 2^{n+1}.
	\end{equation}
	
	To estimate the second moment of $\mathfrak{N}(n,M)$, we write
	\[
	\mathbb{E}\mathfrak{N}(n,M)^2 = \sum_{i,j=0}^{M-1} \mathbb{P}\left(A_{\frac{i}{M}} \cap A_{\frac{j}{M}}\right) \leq 2M\sum_{i=0}^{M-1} \mathbb{P}\left(A_0 \cap A_{\frac{i}{M}}\right) \leq 2M \sum_{i=0}^{M-1} \mathbb{P}\left( B_0 \cap B_{\frac{i}{M}}\right),
	\]
	where $B_t$ is the event that $T_t(0,\partial B(2^n)) = 0$. By \cite[Eq.~(5.4)]{SS10}, there is $\Cl[lgc]{C: noise}>0$ such that 
	\[
	\mathbb{P}(B_0 \cap B_t) \leq \Cr{C: noise} \pi_1(2^n)^2 t^{-\frac{7}{8}} \text{ for } t \in [0,1].
	\]
	This gives
	\begin{align*}
		\mathbb{E}\mathfrak{N}(n,M)^2 &\leq 2M \pi_1(2^n) + 2\Cr{C: noise}M^2 \pi_1(2^n)^2 \cdot \frac{1}{M} \sum_{i=1}^{M-1} \left( \frac{i}{M}\right)^{-\frac{7}{8}} \\
		&\leq 2M\pi_1(2^n) + 2\Cr{C: noise}M^2 \pi_1(2^n)^2 \int_0^1 t^{-\frac{7}{8}}~\text{d}t \\
		&\leq \C[lgc] \left[ M\pi_1(2^n) + M^2 \pi_1(2^n)^2\right].
	\end{align*}
	Quasimultiplicativity of $\pi_1$ from \eqref{eq: quasimultiplicativity} along with \eqref{eq: near_critical_crossing} gives $\pi_1(2^n) \leq \C[lgc] \pi_1(2^{n+1})$. Because $M\pi_1(2^{n+1}) \geq 1$, we obtain the upper bound
	\[
	\mathbb{E}\mathfrak{N}(n,M)^2 \leq \Cl[lgc]{C: next_line} M^2 \pi_1(2^{n+1})^2,
	\]
	which by \eqref{eq: first_moment} is $\leq \left( \Cr{c: hat_head}^4/\Cr{C: next_line}\right) (\mathbb{E}\mathfrak{N}(n,M))^2$. The Paley-Zygmund inequality finishes the proof of \eqref{eq: step_1}.
 
		\begin{figure}
			\centering
			\includegraphics[width=0.6\linewidth]{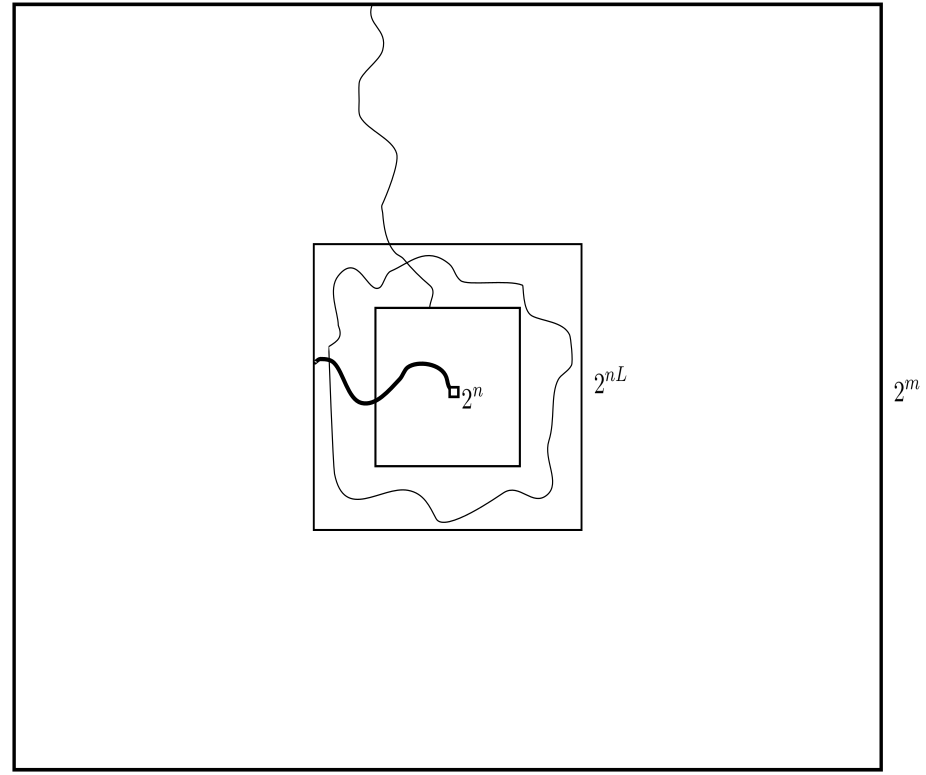}
			\caption{Depiction of the event $F_m$ in Step 2 of the proof of Theorem \ref{thm: Minkowski}(2). The circuit in the annulus $B(2^{nL}) \backslash B(2^{nL-1})$ surrounding the origin has zero passage time and corresponds to item 1. The (boldly drawn) path from $B(2^n)$ to $\partial B(2^{nL})$ has low passage time, corresponding to item 2. The path which reaches $\partial B(2^m)$ has zero passage time, corresponding to item 3. }
			\label{fig: Minkowski}
		\end{figure}

	\paragraph{Step 2:} Constructing times at which $T_t(B(2^n),\infty)$ is small.
	
	In this step, we prove that intervals that are not too small have positive probability to contain a time $t$ at which $B(2^n)$ is connected to infinity by a low-weight path. Precisely, we will show that there exist $\Cl[lgc]{C: step_2},\Cl[smc]{c: step_2}, \delta>0$ such that for all large $n$ and all $L\geq 1$,
	\begin{equation}\label{eq: step_2}
		\mathbb{P}\left( T_t\left( B(2^n), \infty\right) \leq \Cr{C: step_2} nL a_{\lfloor \delta n \rfloor}\text{ for some } t \in \bigg[ 0, 2^{-\frac{nL}{9}}\bigg)\right) \geq \Cr{c: step_2}.
	\end{equation}
	Here, $T_t(B(2^n), \infty) = \lim_{m \to \infty} T_t(B(2^n), \partial B(2^m))$. Because $\rho_t=\infty$ a.s.~for any fixed $t$, it will suffice to prove \eqref{eq: step_2} with $[0,2^{-nL/9})$ replaced by the closed interval $[0,2^{-nL/9}]$.
	
	To prove \eqref{eq: step_2} we again use the second moment method and begin by approximating the event in question by one that depends on the state of finitely many vertices. For $m \geq nL$, let $F_m = F_m(n,L,\delta,\Cr{C: step_2})$ be the event defined by the following conditions (see Figure \ref{fig: Minkowski}):
	\begin{enumerate}
		\item there is a circuit around 0 in $\text{Ann}(2^{nL-1},2^{nL})$ whose vertices $v$ have $\tau_v=0$,
		\item $T(B(2^n),\partial B(2^{nL})) \leq \Cr{C: step_2} nL a_{\lfloor \delta n\rfloor}$, and
		\item $T\left( B(2^{nL-1}),\partial B(2^m)\right) = 0$.
	\end{enumerate}
	Let $F_m^t$ be the event that $F_m$ occurs in the configuration $(\tau_v(t))$. Last, define
	\[
	Y = \int_0^{2^{-\frac{nL}{9}}} \mathbf{1}_{F_m^t}~\text{d}t.
	\]
	The first moment of $Y$ is estimated as
	\[
	\mathbb{E}Y = \int_0^{2^{-\frac{nL}{9}}} \mathbb{P}(F_m^t)~\text{d}t = 2^{-\frac{nL}{9}} \mathbb{P}(F_m),
	\]
	and by the FKG inequality we obtain
	\begin{equation}\label{eq: FKG_breakdown}
		\mathbb{E}Y \geq 2^{-\frac{nL}{9}} \mathbb{P}(\text{item 1})\mathbb{P}(\text{item 2})\mathbb{P}(\text{item 3}).
	\end{equation}
	\eqref{eq: near_critical_crossing} gives a $\Cl[smc]{c: throw_away}$ such that $\mathbb{P}(\text{item 1}) \geq \Cr{c: throw_away}$ and quasimultiplicativity gives $\mathbb{P}(\text{item 3}) \geq \Cr{c: throw_away} \pi_1(2^{nL},2^m)$. 
	
	For item 2, we claim that for some $\Cr{C: step_2},\delta>0$,
	\begin{equation}\label{eq: momentless_bound}
		\mathbb{P}(\text{item 2}) \geq \frac{1}{2} \text{ for } n \text{ large and all } L \geq 1.
	\end{equation}
	We follow much of what was laid out in Section~\ref{sec: ruling_out}. First recall the definition of $\mathsf{T}(n)$ from \eqref{eq: T(n)_def} and observe that for large $n$,
	\[
	\mathbb{P}(\text{item 2 fails}) \leq \mathbb{P}\left( \sum_{k=n}^{nL} \mathsf{T}(k) \geq \Cr{C: step_2} nLa_{\lfloor \delta n \rfloor}\right) \leq \mathbb{P}\left( \sum_{k=n}^{nL} \frac{\mathsf{T}(k)}{F^{-1}(p_{\lceil 1.5^k\rceil})} \geq \Cr{C: step_2} nL\right),
	\]
	so long as $\delta$ is small. By bounding the right side, we obtain
	\begin{equation}\label{eq: nachos_testaverde}
		\mathbb{P}(\text{item 2 fails}) \leq \mathbb{P}\left( \sum_{k=n}^{nL} Y_k \geq \frac{\Cr{C: step_2}}{\Cr{c: stretched}} nL\right) + \mathbb{P}(\exists k \geq n : X_k \neq Y_k),
	\end{equation}
	where
	\[
	X_k = \frac{\mathsf{T}(k)}{\Cr{c: stretched} F^{-1}\left( p_{\lceil 1.5^k\rceil}\right)} \text{ and } Y_k = X_k \mathbf{1}_{\{X_k \leq (4/3)^{\eta k}\}},
	\]
	with $\eta = 2 - \alpha/4$ (this choice corresponds to $\epsilon=\alpha/2$ in \eqref{eq: eta_def}). Just as in \eqref{eq: Y_bound}, we have
	\[
	\mathbb{P}(Y_k \geq u) \leq \Cr{C: pastaroni}\exp\left(-\Cr{c: stretched}u^{\frac{1}{\eta}}\right) \text{ for } u \geq 0 \text{ and } k \geq 2,
	\]
	and so the sequence $(\mathbb{E}Y_k)$ is bounded. Therefore we can continue from \eqref{eq: nachos_testaverde} and choose $\Cr{C: step_2}>0$ such that 
	\begin{equation}\label{eq: last_eq_pizza_head}
		\mathbb{P}(\text{item 2 fails}) \leq \frac{1}{4} + \mathbb{P}(\exists k \geq n : X_k \neq Y_k).
	\end{equation}
	Like in \eqref{eq: X_n_Y_n}, we have $\mathbb{P}(X_k \neq Y_k) \leq 18^{-(k+2)}$ for large $k$, so a union bound on the right side of \eqref{eq: last_eq_pizza_head} shows \eqref{eq: momentless_bound}.
	
	
	Putting \eqref{eq: momentless_bound} and the bounds on items 1 and 3 into \eqref{eq: FKG_breakdown} produces for our chosen $\delta$ and $\Cr{C: step_2}$,
	\begin{equation}\label{eq: first_moment_Y}
		\mathbb{E}Y \geq \Cl[smc]{c: lagatta}2^{-\frac{nL}{9}} \pi_1\left( 2^{nL}, 2^m\right) \text{ for large }n \text{ and all } L \geq 1, m \geq nL.
	\end{equation}
	
	The second moment is
	\begin{align}
		\mathbb{E}Y^2 &= \int_0^{2^{-\frac{nL}{9}}} \int_0^{2^{-\frac{nL}{9}}} \mathbb{P}\left( F_m^t\cap F_m^s\right)~\text{d}s~\text{d}t \nonumber\\
		&\leq 2 \cdot 2^{-\frac{nL}{9}} \int_0^{2^{-\frac{nL}{9}}} \mathbb{P}\left( F_m^0 \cap F_m^t\right)~\text{d}t \nonumber \\
		&\leq 2\cdot 2^{-\frac{nL}{9}} \int_0^{2^{-\frac{nL}{9}}} \mathbb{P}\left( W_0(2^{nL},2^m) \cap W_t(2^{nL},2^m)\right)~\text{d}t. \label{eq: Y_second_moment_1}
	\end{align}
	Here, $W_t(r,R)$ is the event that $T_t\left( B(r),\partial B(R)\right) = 0$. To estimate the probability in the integral we give a lemma that closely follows \cite[p.~647]{SS10}.
	\begin{Lemma}
		There exists $\Cl[lgc]{C: noise_2}>0$ such that for all large $r$, if $R \geq r$ and $t \leq r^{-1/9}$, then
		\[
		\mathbb{P}\left( W_0(r,R) \cap W_t(r,R)\right) \leq \Cr{C: noise_2} \pi_1(r,R)^2 \pi_1\left( r, \left\lceil t^{-9}\right\rceil\right)^{-1}.
		\]
	\end{Lemma}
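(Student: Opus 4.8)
The plan is to recognize the event $W_t(r,R)$ as the one-arm (annulus-crossing) event of critical Bernoulli site percolation, sampled dynamically --- at time $t$ the weight-zero vertices are exactly the open vertices --- so that the asserted inequality is a two-time one-arm estimate for dynamical percolation. First I would set up the standard coupling: let $\mathcal U$ be the random set of vertices whose Poisson clock rings during $[0,t]$, so the events $\{v\in\mathcal U\}$ are independent with $\mathbb P(v\in\mathcal U)=1-e^{-t}\le t$, and, conditionally on $\mathcal U$, the configurations $\omega(0)$ and $\omega(t)$ coincide off $\mathcal U$, are independent on $\mathcal U$, and are each marginally critical. Write $\ell=\lceil t^{-9}\rceil$; the hypothesis $t\le r^{-1/9}$ forces $\ell\ge r$, and if $\ell\ge R$ the claimed bound is already implied by the trivial estimate $\mathbb P(W_0(r,R)\cap W_t(r,R))\le\pi_1(r,R)$, so the main case is $r<\ell<R$ (the degenerate cases $\ell=r$ and $\ell=R$ reduce, respectively, to the two-time one-arm bound below and to this trivial estimate).

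The reduction step isolates the contribution of the scales below $\ell$. The occurrence of $W_0(r,R)\cap W_t(r,R)$ implies the occurrence of $W_0(r,\ell)$, $W_0(\ell,R)$ and $W_t(\ell,R)$, since a crossing of $\text{Ann}(r,R)$ restricts to a crossing of each concentric sub-annulus. Inserting a buffer sub-annulus of bounded aspect ratio between $B(\ell)$ and the outer region --- which, by quasimultiplicativity \eqref{eq: quasimultiplicativity} and \eqref{eq: near_critical_crossing}, costs only a multiplicative constant --- one may arrange that the event controlling the inner scales and the event controlling the outer scales depend on disjoint collections of initial weights, Poisson clocks and resampled values, hence are independent. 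This gives
\[
\mathbb P\big(W_0(r,R)\cap W_t(r,R)\big)\;\le\;\Cl[lgc]{C: red}\,\pi_1(r,\ell)\,\mathbb P\big(W_0(\ell,R)\cap W_t(\ell,R)\big).
\]
It therefore suffices to prove the two-time one-arm bound
\[
\mathbb P\big(W_0(\ell,R)\cap W_t(\ell,R)\big)\;\le\;\Cl[lgc]{C: core_arm}\,\pi_1(\ell,R)^2\qquad\text{whenever }\ell\ge\lceil t^{-9}\rceil ,
\]
since quasimultiplicativity \eqref{eq: quasimultiplicativity} then bounds $\pi_1(r,\ell)\pi_1(\ell,R)^2$ by a constant multiple of $\pi_1(r,R)^2/\pi_1(r,\ell)=\pi_1(r,R)^2/\pi_1\big(r,\lceil t^{-9}\rceil\big)$, which is the lemma.

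The two-time one-arm bound is the only substantial point, and here I would follow p.~647 of \cite{SS10} closely. The mechanism is scale by scale: running through the dyadic annuli of $\text{Ann}(\ell,R)$, within one that $\mathcal U$ does not meet the configurations $\omega(0)$ and $\omega(t)$ coincide, so the time-$0$ and time-$t$ crossings of that annulus are the same event and cost only a single arm factor, which is bounded below (by \eqref{eq: near_critical_crossing}); within one that $\mathcal U$ does meet the two crossings may separate, but then some resampled vertex must lie on --- or be a four-arm center for --- one of the crossings, so the density bound $\mathbb P(v\in\mathcal U)\le t$ makes such annuli rare. Summing over the $O(\log(R/\ell))$ scales and balancing the density $t$ of $\mathcal U$ against the one- and four-arm exponents --- using \eqref{eq: annulus_arm_probability}, \eqref{eq: quasimultiplicativity} and \eqref{eq: near_critical_arm_event_comparability}, exactly as on p.~647 of \cite{SS10} --- produces the collapse from $\pi_1(\ell,R)$ to $\pi_1(\ell,R)^2$ once $\ell$ exceeds a fixed power of $1/t$; the choice $\ell=\lceil t^{-9}\rceil$ is comfortably large enough (the exponent $9$ is not optimal; compare the exponent $7/8$ in \cite[Eq.~(5.4)]{SS10}). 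Carrying out that summation so that the multiplicative loss coming from the ``separated'' scales is bounded uniformly in $R$ is the main technical obstacle; the rest of the argument is the routine reduction above.
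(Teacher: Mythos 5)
Your reduction step is fine and parallels the paper's own first move: the paper likewise writes $\mathbb{P}(W_0(r,R)\cap W_t(r,R))\le \pi_1(r,s)\,\mathbb{P}(W_0(s,R)\cap W_t(s,R))$ and then takes $s=\lceil t^{-9}\rceil$, and your final bookkeeping with quasimultiplicativity is exactly what is needed. The gap is in the step you defer as ``the main technical obstacle'': the bound $\mathbb{P}(W_0(\ell,R)\cap W_t(\ell,R))\le C\,\pi_1(\ell,R)^2$, uniformly in $R$, for $\ell\ge\lceil t^{-9}\rceil$. This is a quantitative noise-sensitivity statement --- the two-time correlation of the one-arm event must be comparable to the \emph{square} of its probability across arbitrarily many scales --- and it is the entire content of the lemma, not a routine summation. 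The mechanism you sketch does not produce it. The dyadic annuli that $\mathcal U$ does not meet are the scales where $\omega(0)$ and $\omega(t)$ coincide; there the two crossing events are literally the same event, which pushes the joint probability up toward $\pi_1(\ell,R)$, i.e.\ works against decorrelation. Decorrelation must come from the scales that $\mathcal U$ \emph{does} hit, and one has to show that enough fresh randomness accumulates over scales that the conditional probability of the time-$t$ arm given the time-$0$ arm drops to order $\pi_1(\ell,R)$. A union bound over ``annuli where the crossings separate, forced by a resampled four-arm vertex'' estimates the probability that the two crossings differ, which is a different (and here useless) quantity, and in any case yields bounds of the shape $\pi_1\cdot(\text{small})$, never the squared arm probability.

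The attribution to \cite{SS10}, p.~647, is also off: what is done there (and what the paper does) is Fourier-analytic, not geometric. One expands the indicator $f_s^R$ of $W(s,R)$ in the Fourier--Walsh basis, uses $\mathbb{P}(W_0(s,R)\cap W_t(s,R))=\sum_S e^{-t\#S}\hat f_s^R(S)^2$, isolates $\hat f_s^R(\emptyset)^2=\pi_1(s,R)^2$, and controls the low-frequency spectral mass by \cite[Cor.~4.5]{SS10}, namely $\sum_{\#S=k}\hat f_s^R(S)^2\le k\,s^{o(1)}\pi_1(s,R)^2\pi_2(s)$, an estimate whose proof rests on the randomized-algorithm/revealment method and is the genuinely hard input. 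Summing this against $e^{-t k}$ produces the factor $s^{o(1)}\pi_2(s)/t^2$, which is bounded precisely because $s=\lceil t^{-9}\rceil$ and the two-arm exponent equals $1/4$, so $\pi_2(s)=t^{9/4+o(1)}$; that is where the exponent $9$ enters, not in any counting of exceptional scales. Without invoking this spectral estimate (or an equivalent quantitative noise-sensitivity input, e.g.\ from \cite{GPS10}), the summation you describe cannot close, so as written the proposal has a genuine gap at its central step.
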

	
	\begin{proof}
		The proof of this lemma uses discrete Fourier analysis and is similar to the corresponding arguments of \cite{SS10}, so we only sketch the idea. With the notation $f_r^R = \mathbf{1}_{W(r,R)}$, where $W(r,R)$ is the event that $T(B(r), \partial B(R)) = 0$, \cite[Cor.~4.5]{SS10} states that
		\[
		\sum_{\#S=k} \hat{f}_r^R(S)^2 \leq kr^{o(1)}\pi_1(r,R)^2 \pi_2(r) \text{ for } 1 \leq r \leq R \text{ and } k \geq 1,
		\]
		where $\hat{f}_r^R(S)$ is the Fourier-Walsh coefficient associated to a subset $S$ of the vertices of $B(R)$, and the sum is over all such sets of cardinality $k$. The $r^{o(1)}$ factor depends on $r$ only and not on $R$, and $o(1)$ is a term that converges to 0 as $r \to \infty$. 
		Exactly as in \cite[p.~647]{SS10}, we also have for any $s \in [r,R]$
		\[
		\mathbb{P}(W_0(r,R)\cap W_t(r,R)) \leq \pi_1(r,s) \mathbb{P}(W_0(s,R) \cap W_t(s,R)) = \pi_1(r,s) \sum_S e^{-t\#S} \hat{f}_s^R(S)^2.
		\]
		Combining these inequalities and using the fact that $\hat{f}_s^R(\emptyset) = \pi_1(s,R)$ produces the bound
		\[
		\mathbb{P}(W_0(r,R) \cap W_t(r,R)) \leq \pi_1(r,s) \left( \pi_1(s,R)^2 + s^{o(1)} \sum_{k=1}^\infty e^{-kt} k \pi_1(s,R)^2 \pi_2(s)\right).
		\]
		Because $\sum_{k=1}^\infty ke^{-kt} \leq C/t^2$, we obtain for $s \in [r,R]$
		\[
		\mathbb{P}(W_0(r,R) \cap W_t(r,R)) \leq \pi_1(r,s) \pi_1(s,R)^2 \left[ 1 + \frac{s^{o(1)}\pi_2(s)}{t^2}\right].
		\]
		Applying quasimultiplicativity, we obtain for $s \in [r,R]$
		\[
		\mathbb{P}(W_0(r,R) \cap W_t(r,R)) \leq \Cr{c: c_quasi}^{-1} \pi_1(r,R)^2 \pi_1(r,s)^{-1} \left[ 1+ \frac{s^{o(1)} \pi_2(s)}{t^2}\right].
		\]
		Although we had the restriction $s \in [r,R]$, this inequality remains true for $s \geq R$. We can therefore take $s = \lceil t^{-9}\rceil$, which is $\geq r$ by assumption, to obtain
		\begin{equation}\label{eq: SS_almost_done}
			\mathbb{P}(W_0(r,R) \cap W_t(r,R)) \leq \Cr{c: c_quasi}^{-1} \pi_1(r,\lceil t^{-9}\rceil)^{-1} \pi_1(r,R)^2 \left[ 1+ \frac{t^{o(1)} \pi_2(\lceil t^{-9}\rceil)}{t^2}\right].
		\end{equation}
		The exact value $1/4$ of the two-arm exponent from \eqref{eq: arm_exponents_1_2_4} implies $\pi_2(\lceil t^{-9}\rceil) = t^{9/4+o(1)}$ as $t \downarrow 0$. Placing this in \eqref{eq: SS_almost_done} completes the proof of the lemma.
	\end{proof}
	
	Returning to \eqref{eq: Y_second_moment_1}, we use the lemma under the assumption that $n$ is large, $L \geq 1$, and $m \geq nL$ to find
	\begin{equation}\label{eq: Y_second_moment_2}
		\int_0^{2^{-\frac{nL}{9}}} \mathbb{P}\left( W_0(2^{nL},2^m) \cap W_t(2^{nL},2^m)\right)~\text{d}t \leq \Cr{C: noise_2} \pi_1(2^{nL}, 2^m)^2 \int_0^{2^{-\frac{nL}{9}}}  \pi_1\left( 2^{nL}, \left\lceil t^{-9}\right\rceil\right)^{-1} ~\text{d}t.
	\end{equation}
	The exact value of the one-arm exponent in \eqref{eq: arm_exponents_1_2_4} provides, for any $\alpha > 5/48$, a constant $\Cl[smc]{c: one_arm}$ such that $\pi_1\left( 2^{nL}, \left\lceil t^{-9}\right\rceil\right) \geq \Cr{c: one_arm}\left( 2^{nL} t^9 \right)^{\alpha}$ and so
	\[
	\int_0^{2^{-\frac{nL}{9}}}  \pi_1\left( 2^{nL}, \left\lceil t^{-9}\right\rceil\right)^{-1} ~\text{d}t \leq \Cr{c: one_arm}^{-1} 2^{-nL\alpha} \int_0^{2^{-\frac{nL}{9}}} t^{-9\alpha}~\text{d}t = \Cr{c: one_arm}^{-1} 2^{-nL\alpha} \frac{2^{-\frac{nL}{9}(1-9\alpha)}}{1-9\alpha}
	\]
	assuming $\alpha \in (5/48,1/9)$. Putting this estimate back in \eqref{eq: Y_second_moment_2}, and then back in \eqref{eq: Y_second_moment_1} produces, upon comparison to \eqref{eq: first_moment_Y}, the upper bound
	\begin{equation}\label{eq: first_second_moment_comparison}
		\mathbb{E}Y^2 \leq \Cl[lgc]{C: I_am_new} \cdot \left( 2^{-\frac{nL}{9}} \pi_1(2^{nL},2^m)\right)^2 \leq \frac{\Cr{C: I_am_new}}{\Cr{c: lagatta}^2} \left(\mathbb{E}Y\right)^2 \text{ for large }n \text{ and all } L \geq 1, m\geq nL.
	\end{equation}
	
	Now that we have \eqref{eq: first_second_moment_comparison}, we can apply the Paley-Zygmund inequality to find $\Cl[smc]{c: PZ}>0$ such that 
	\[
	\mathbb{P}\left( Y \geq \Cr{c: PZ} 2^{-\frac{nL}{9}} \pi_1\left( 2^{nL},2^m\right)\right) \geq \Cr{c: PZ} \text{ for large }n \text{ and all } L \geq 1, m\geq nL.
	\]
	This implies
	\[
	\mathbb{P}\left( F_m^t \text{ occurs for some } t \in \left[ 0, 2^{-\frac{nL}{9}}\right]\right) \geq \Cr{c: PZ} \text{ for large }n \text{ and all } L \geq 1, m \geq nL
	\]
	and so
	\begin{equation}\label{eq: io}
		\mathbb{P}\left( \cup_{t \in \left[ 0, 2^{-\frac{nL}{9}}\right]} F_m^t \neq \emptyset \text{ for infinitely many } m\right) \geq \Cr{c: PZ} \text{ for large }n \text{ and all } L \geq 1.
	\end{equation}
	
	To complete the proof of \eqref{eq: step_2} and therefore to move to step 3, we must show that a.s., the event $\{\cup_t F_m^t \neq \emptyset \text{ for infinitely many }m\}$ in \eqref{eq: io} implies the event in the probability in \eqref{eq: step_2}. To do this, define the following sets of times corresponding to items 1-3 of the definition of $F_m$:
	\begin{enumerate}
		\item $F(1)$, the set of $t \in \left[ 0,2^{-\frac{nL}{9}}\right]$ such that 
		\begin{enumerate}
			\item there is a circuit around 0 in $\text{Ann}(2^{nL-1},2^{nL})$ whose vertices $v$ have $\tau_v(t) = 0$ and
			\item $T_t(B(2^n),\partial B(2^{nL})) \leq \Cr{C: step_2}nLa_{\lfloor \delta n\rfloor}$.
		\end{enumerate}
		\item $F_m(2)$ is the set of $t \in \left[ 0,2^{-\frac{nL}{9}}\right]$ such that $T_t\left( B(2^{nL-1}), \partial B(2^m)\right) = 0$.
	\end{enumerate}
	We also define a new process $(\bar{\tau}_v(t))$ by setting, for every vertex $v$, the set $\{t : \bar{\tau}_v(t) = 0\}$ to be the closure of the set $\{t : \tau_v(t) = 0\}$, and otherwise $\tau_v(t) = \bar{\tau}_v(t)$. Then \cite[Lem.~3.2]{HPS97} states that a.s., for every vertex $v$,
	\begin{equation}\label{eq: HPS_lemma}
		\begin{split}
			&\{t \geq 0 : v \text{ is in an infinite path of vertices }w \text{ with } \bar{\tau}_w(t) = 0\} \\
			=~& \{t \geq 0 : v \text{ is in an infinite path of vertices }w \text{ with } \tau_w(t) = 0\}.
		\end{split}
	\end{equation}
	Last, we need a consequence of the argument of \cite[Lem.~3.2]{HPS97}: a.s., there are no infinite clusters of zero-weight vertices at any times at which any vertex's Poisson process increments. That is,
	\begin{equation}\label{eq: HPS_exceptional_lemma}
		\mathbb{P}\left( \exists t \geq 0 : \begin{array}{c}
			\text{ there is an infinite path whose vertices }v\text{ have } \tau_v(t) = 0 \\
			\text{ and } \mathfrak{s}_w(t-) < \mathfrak{s}_w(t) \text{ for some }w
		\end{array}
		\right) = 0.
	\end{equation}
	
	Now fix an outcome in the intersection of the event in \eqref{eq: io}, the event that \eqref{eq: HPS_lemma} occurs for all $v$, and the complement of the event in the probability in \eqref{eq: HPS_exceptional_lemma}. We will argue that this outcome is in the event in the probability in \eqref{eq: step_2}. Because the event in \eqref{eq: io} occurs, we can find an increasing sequence $(m_k)$ of integers such that $F_{m_k}(2) \cap F(1)$ is nonempty. Let $S_{m_k}$ be the closure of $F_{m_k}(2) \cap F(1)$ and note that the $S_{m_k}$'s are compact and nested, so there exists a time $t \in [0,2^{-nL/9}]$ in their intersection. We will show that this $t$ satisfies $T_t(B(2^n),\infty) \leq \Cr{C: step_2}nLa_{\lfloor \delta n\rfloor}$. Due to the bound \eqref{eq: io}, this will show \eqref{eq: step_2}. By planarity, it will suffice to prove that $t \in F(1)$, and that $T_t(B(2^{nL-1}),\infty)=0$.
	
	To do this, we note that the closure of $F_{m_k}(2)$ equals the set defined by the same conditions as those in the definition of $F_{m_k}(2)$, but with $\tau$ replaced by $\bar{\tau}$. As $t$ is in this closure for all $k$, there is an infinite self-avoiding path intersecting $B(2^{nL-1})$ whose vertices $w$ have $\bar{\tau}_w(t) = 0$. This means that $t$ is in the (equal) sets in \eqref{eq: HPS_lemma} for some $v \in B(2^{nL-1})$, and therefore there is an infinite self-avoiding path whose vertices $w$ have $\tau_w(t) = 0$. Aiming for a contradiction, if we assume $t$ is not in $F(1)$, then because it is in the closure of $F(1)$, a set of disjoint half-open intervals (except possibly the point $2^{-nL/9}$), there must be some $w$ such that $\mathfrak{s}_w(t-)<\mathfrak{s}_w(t)$. Our outcome is in the complement of the event in the probability in \eqref{eq: HPS_exceptional_lemma}, so we obtain a contradiction. Therefore $t \in F(1)$ as well, and this completes step 2.

	\paragraph{Step 3.} Gluing.
	
	In this step, we glue together the events from the previous two steps to produce many times $t$ for which $\rho_t$ is small. Specifically, we find $\Cl[smc]{c: another_new}>0$ such that for $\Cr{C: step_2},\delta>0$ from \eqref{eq: step_2}, if $n$ is large, and $M$ satisfies \eqref{eq: step_1_conditions}, then for all $L \geq 1$, the covering number $N$ satisfies
	\begin{equation}\label{eq: step_3}
		\mathbb{P}\left( N\left( \left\{t \in [0,1] : \rho_t \leq \Cr{C: step_2}nLa_{\lfloor \delta n\rfloor} \right\}, 2^{-\frac{nL}{9}}\right) \geq \Cr{c: another_new} M\pi_1(2^{n+1}) \left\lfloor \frac{2^{\frac{nL}{9}}}{M}\right\rfloor\right) \geq \Cr{c: another_new}.
	\end{equation}
	
	We begin with the event from step 1. Because $M,n$ satisfy \eqref{eq: step_1_conditions}, if we define $S = S((x_i)) = \sum_i x_i$ for $x_0, \dots, x_{M-1} \in \{0,1\}$, then
	\begin{equation}\label{eq: step_3_indicators}
		\Cr{c: second_moment} \leq \mathbb{P}\left( \mathfrak{N}(n,M) \geq \Cr{c: second_moment} M \pi_1(2^{n+1})\right) = \sum_{(x_i) : S \geq \Cr{c: second_moment} M\pi_1(2^{n+1})} \mathbb{P}\left( \mathbf{1}_{A_{\frac{i}{M}}} = x_i \text{ for all } i\right).
	\end{equation}
	For any $i$ such that $A_{i/M}$ occurs, we will need to decouple the configurations inside and outside of the circuit $\mathcal{C}$. To do this, we define, for any circuit $\mathcal{C}$ around the origin in $\text{Ann}(2^n,2^{n+1})$, the event $\mathcal{D}_{i/M}(\mathcal{C})$ using the conditions
	\begin{enumerate}
		\item $\mathcal{C}$ is the innermost circuit in $\text{Ann}(2^n,2^{n+1})$ such that for all $v \in \mathcal{C}$, $\tau_v\left( \frac{i}{M}\right) = 0$ and $\mathfrak{s}_v$ does not increment in $[\frac{i}{M}, \frac{i+1}{M})$, and
		\item there exists a path $\gamma$ from $0$ to $\mathcal{C}$ in the interior of $\mathcal{C}$ such that for all $v \in \gamma$, $\tau_v\left( \frac{i}{M}\right) = 0$ and $\mathfrak{s}_v$ does not increment in $[\frac{i}{M}, \frac{i+1}{M})$.
	\end{enumerate}
	These conditions simply state that $\mathcal{C}$ is the innermost 0-circuit and $0$ is connected to $\mathcal{C}$ by a $0$-path, all in the weights $(\sigma_v)$ defined below \eqref{eq: step_1}. Therefore for distinct $\mathcal{C}, \mathcal{C}'$, the events $\mathcal{D}_{i/M} (\mathcal{C})$ and $\mathcal{D}_{i/M}(\mathcal{C}')$ are disjoint, and $A_{i/M} = \cup_{\mathcal{C}} \mathcal{D}_{i/M}(\mathcal{C})$. So we can decompose \eqref{eq: step_3_indicators} to obtain
	\begin{equation}\label{eq: step_3_circuit_decomposition}
		\Cr{c: second_moment} \leq \sum_{(x_i) : S \geq \Cr{c: second_moment} M\pi_1(2^{n+1})} \sum_{(\mathcal{C}_i)} \mathbb{P}\left( \mathbf{1}_{A_{\frac{i}{M}}} = 0 \text{ if } x_i = 0, \mathcal{D}_{\frac{i}{M}}(\mathcal{C}_i) \text{ occurs if } x_i=1\right).
	\end{equation}
	The inner ($(x_i)$-dependent) sum is understood to be over all choices of circuits $\mathcal{C}_i$ for those $i$ such that $x_i=1$.
	
	On the event in the sum of \eqref{eq: step_3_circuit_decomposition}, we will create many times at which the circuits $\mathcal{C}_i$ are connected to infinity by low-weight paths. So for $i$ such that $x_i=1$, we split the interval $[i/M,(i+1)/M)$ into contiguous subintervals
	\[
	\bigg[\frac{i}{M}, \frac{i}{M} + 2^{-\frac{nL}{9}}\bigg), \bigg[ \frac{i}{M} + 2^{-\frac{nL}{9}}, \frac{i}{M} + 2 \cdot 2^{-\frac{nL}{9}}\bigg), \dots,
	\]
	so that the total number of subintervals obtained is $\lfloor 2^{nL/9}/M\rfloor$. The collection $\mathcal{I} = \mathcal{I}((x_i))$ of all such subintervals, for all $i$ with $x_i = 1$, satisfies
	\begin{equation}\label{eq: I_cardinality}
		\#\mathcal{I} = S \left\lfloor \frac{2^{\frac{nL}{9}}}{M}\right\rfloor.
	\end{equation}
	Now for any $I \in \mathcal{I}$, let $i(I)$ be the value of $i$ such that $I \subset [i/M,(i+1)/M)$ and for a circuit $\mathcal{C}_i$ in $\text{Ann}(2^n,2^{n+1})$ around 0, let $B_I(\mathcal{C}_i)$ be the event that there exists $t \in I$ such that $T_t(\mathcal{C}_i,\infty) \leq \Cr{C: step_2}nL a_{\lfloor \delta n\rfloor}$. Because we have assumed that $n$ is large, we can apply inequality \eqref{eq: step_2} from step 2 and the Paley-Zygmund inequality to obtain
	\begin{equation}\label{eq: B_I_inequality}
		\mathbb{P}\left( B_I \text{ occurs for at least } \Cl[smc]{c: swedish_stunner} \#\mathcal{I} \text{ many } I \in \mathcal{I}\right) \geq \Cr{c: swedish_stunner}
	\end{equation}
	for some $\Cr{c: swedish_stunner}>0$. Indeed, the expected number of such $I$ is at least $\Cr{c: step_2}\#\mathcal{I}$ by \eqref{eq: step_2}, but the second moment is at most $(\#\mathcal{I})^2$.
	
	Next we observe that the event in the probability in \eqref{eq: step_3_circuit_decomposition} depends on $\tau_v(t)$ for (a) $v \in \mathcal{C}_i$ or in the interior of $\mathcal{C}_i$ and $t \in [i/M,(i+1)/M)$ for all $i$ such that $x_i=1$ and (b) $v \in B(2^{n+1})$ and $t \in [i/M,(i+1)/M)$ for all $i$ such that $x_i=0$. On the other hand, the event in the probability in \eqref{eq: B_I_inequality} depends on $\tau_v(t)$ for $v$ in the exterior of $\mathcal{C}_i$ and $t \in [i/M,(i+1)/M)$ for all $i$ such that $x_i=1$. Therefore these events are independent, and we write 
	\[
	\Cr{c: second_moment}\Cr{c: swedish_stunner} \leq \sum_{(x_i) : S \geq \Cr{c: second_moment} M\pi_1(2^{n+1})} \sum_{(\mathcal{C}_i)} \mathbb{P}\left( \begin{array}{c}
		\mathbf{1}_{A_{\frac{i}{M}}} = 0 \text{ if } x_i = 0, \mathcal{D}_{\frac{i}{M}}(\mathcal{C}_i) \text{ occurs if } x_i=1, \\
		B_I \text{ occurs for at least } \Cr{c: swedish_stunner} \#\mathcal{I} \text{ many } I \in \mathcal{I}
	\end{array}\right).
	\]
	For any $I \in \mathcal{I}$ such that $B_I$ occurs and $\mathcal{D}_{i/M}(\mathcal{C}_i)$ also occurs, there is a $t \in I$ such that $\rho_t \leq \Cr{C: step_2}nLa_{\lfloor \delta n \rfloor}$. Indeed, $\mathcal{D}_{i/M}(\mathcal{C}_i)$ occurs, so for all $s \in [i/M,(i+1)/M)$, $T_s(0,\mathcal{C}_i) = 0$ and $T_s(\mathcal{C}_i) = 0$, and furthermore for some $t \in I$ we also have $T_t(\mathcal{C}_i,\infty) \leq \Cr{C: step_2}nLa_{\lfloor \delta n\rfloor}$. Therefore after summing over $(\mathcal{C}_i)$, we obtain
	\[
	\Cr{c: second_moment}\Cr{c: swedish_stunner} \leq \sum_{(x_i) : S \geq \Cr{c: second_moment} M\pi_1(2^{n+1})} \mathbb{P}\left( \begin{array}{c}
		\mathbf{1}_{A_{\frac{i}{M}}} = x_i \text{ for all } i, \text{ at least }\Cr{c: swedish_stunner} \#\mathcal{I} \text{ many } I \in \mathcal{I} \\
		\text{ contain a }t \text{ such that } \rho_t \leq \Cr{C: step_2}nLa_{\lfloor \delta n \rfloor}
	\end{array}\right).
	\]
	Using \eqref{eq: I_cardinality} and summing over $(x_i)$ produces
	\[
	\Cr{c: second_moment}\Cr{c: swedish_stunner} \leq \mathbb{P}\left( \begin{array}{c}
		\text{at least }\Cr{c: swedish_stunner}\Cr{c: second_moment} M\pi_1(2^{n+1}) \left\lfloor \frac{2^{\frac{nL}{9}}}{M}\right\rfloor \text{ many disjoint intervals } \bigg[a, a+2^{-\frac{nL}{9}}\bigg) \\
		\text{in } [0,1]\text{ contain a }t \text{ such that } \rho_t \leq \Cr{C: step_2} nLa_{\lfloor \delta n \rfloor}
	\end{array}\right).
	\]
	This implies \eqref{eq: step_3} and completes step 3.

	\paragraph{Step 4.} Last, we use step 3 to lower bound the Minkowski dimension. Our assumption is $\liminf_{n \to \infty} na_n = 0$, so we can find a subsequence $(n_k)$ such that $n_ka_{\lfloor \delta n_k\rfloor} \to 0$, where $\delta$ is from \eqref{eq: step_2}. For $k \geq 1$, set $M = M(n_k) = \lceil 2^{\alpha n_k}\rceil$ for some $\alpha > 3/4$, and fix $L \geq 1$. Using the asymptotics for $L(p)$ in \eqref{eq: correlation_scaling} and the exact value $5/48$ of the one arm exponent from \eqref{eq: arm_exponents_1_2_4}, a direct computation shows that \eqref{eq: step_1_conditions} holds for $n_k$ and $M$, so we can apply inequality \eqref{eq: step_3} of step 3. For any $\beta > 5/48$, we obtain with probability $\geq \Cr{c: another_new}$
	\[
	N\left( \left\{t \in [0,1] : \rho_t \leq \Cr{C: step_2} n_kLa_{\lfloor \delta n_k \rfloor}\right\}, 2^{-\frac{n_kL}{9}}\right) \geq \Cr{c: another_new} \lceil 2^{\alpha n_k}\rceil 2^{-\beta n_k} \left\lfloor \frac{2^{\frac{n_kL}{9}}}{\left\lceil 2^{\alpha n_k}\right\rceil}\right\rfloor
	\]
	holds for infinitely many $k$. So long as $L$ is chosen to be $>9\alpha$, one has $2^{n_kL/9}/ 2^{\alpha n_k} \to \infty$, and so the right side is $\geq (\Cr{c: another_new}/2)2^{n_k((L/9)-\beta)}$ for infinitely many $k$. Furthermore, the term $\Cr{C: step_2}n_kLa_{\lfloor \delta n_k \rfloor} \to 0$ as $k \to \infty$, so for any $x >0$, with probability $\geq \Cr{c: another_new}$,
	\begin{equation}\label{eq: new_x_choice}
		N\left( \{t \in [0,1] : \rho_t \leq x\}, 2^{-\frac{n_kL}{9}}\right) \geq \frac{\Cr{c: another_new}}{2} 2^{n_k\left( \frac{L}{9}-\beta\right)} 
	\end{equation}
	for infinitely many $k$. The definition of Minkowski dimension then implies 
	\begin{equation}\label{eq: almost_lower_bound}
		\mathbb{P}\left( \textnormal{dim}^\textnormal{M}(\{t\in [0,1] : \rho_t \leq x\}) \geq 1 - \frac{9 \beta}{L}\right) \geq \Cr{c: another_new}.
	\end{equation}
	Because $\Cr{c: another_new}$ does not depend on $L$ or $x$, we can take $L \to \infty$ for
	\[
	\mathbb{P}\left( \textnormal{dim}^\textnormal{M}(\{t \in [0,1] : \rho_t \leq x\}) = 1\right) \geq \Cr{c: another_new}.
	\]
	Because the sequence $\left(\textnormal{dim}^\textnormal{M}(\{t \in [m,m+1] : \rho_t \leq x\})\right)_{m \geq 0}$ is ergodic (similar to \cite[p.~522]{HPS97} or \cite[Lem.~2.3]{HPS15}), a.s.
	\[
	\text{there exists } m \geq 0 \text{ such that } \textnormal{dim}^\textnormal{M}(\{t \in [m,m+1] : \rho_t \leq x\}) = 1,
	\]
	and by monotonicity of Minkowski dimension, a.s., there exists $s \geq 0$ such that $\textnormal{dim}^\textnormal{M}(\{t \in [0,s] : \rho_t \leq x\}) = 1$. This implies the statement of Theorem~\ref{thm: Minkowski}(2).

	\section{Intermediate cases}\label{sec: formal_statements}
	In this section, we explain how to prove the claims labeled $(\dagger)$, listed above the statement of Theorem~\ref{thm: other_side}. Specifically, we give:
	\begin{Thm}\label{thm: dental_caries}
		Suppose that $F$ satisfies \eqref{eq: critical_def} and \eqref{eq: infinite_sum}.
		\begin{enumerate}
			\item If $\limsup_{k \to \infty} ka_k = \infty$, then for all $x,s \geq 0$,
			\[
			\textnormal{dim}_{\textnormal{M}}\left( \{t \in [0,s] : \rho_t \leq x\}\right) \leq \frac{31}{36} \text{ a.s.}
			\]
			\item If $\liminf_{k \to \infty} ka_k > 0$, then given $\epsilon>0$, there exists $x >0$ such that for all $s\geq 0$,
			\[
			\textnormal{dim}^\textnormal{M}\left( \{t \in [0,s] : \rho_t \leq x\}\right) \leq \frac{31}{36} + \epsilon \text{ a.s.}
			\]
			\item If $\liminf_{k \to \infty} ka_k < \infty$, then for any $\epsilon>0$, there exists $x>0$ such that
			\[
			\lim_{s \to \infty} \mathbb{P}\left( \textnormal{dim}^\textnormal{M}\left( \{ t \in [0,s] : \rho_t \leq x\}\right) > 1-\epsilon\right) = 1.
			\]
		\end{enumerate}
	\end{Thm}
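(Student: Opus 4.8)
The plan is to reuse, with quantitative modifications, the covering estimates of Section~\ref{sec: Minkowski}: items~1 and~2 refine the upper-bound argument for Theorem~\ref{thm: Minkowski}(1) (Lemma~\ref{lem: expectation_covering_upper} together with the chain of estimates producing~\eqref{eq: upper_end}), and item~3 refines the lower-bound argument for Theorem~\ref{thm: Minkowski}(2). Recall $(a_k)$ is nonincreasing, and under~\eqref{eq: infinite_sum} the hypotheses of items~2 and~3 force $a_k\to0$ (otherwise $ka_k\to\infty$). For item~1, since $\limsup_k ka_k=\infty$ we fix a strictly increasing integer sequence $(k_j)$ with $k_j a_{k_j}\to\infty$. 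Fixing $x,s\ge0$ and $\delta>0$, we run the argument of Section~\ref{sec: Minkowski} with $\epsilon=2^{-k_j}$ and $n=\lceil(4/3-\delta/2)k_j\rceil$ along $k=k_j$ only: \eqref{eq: n_condition} holds for large $j$ by~\eqref{eq: correlation_scaling}, and $k_j a_{k_j}\to\infty$ lets us fix $\Cr{C: x_inequality}$ (depending on $x,\delta$) so that the parameter $y$ of~\eqref{eq: s_def} is $\le (x/\Cr{C: x_inequality})k_j\le n/2$ for large $j$, whence as in~\eqref{eq: upper_end}, $\mathbb{E}\,N(\{t\in[0,s]:\rho_t\le x\},2^{-k_j})\le\lceil s2^{k_j}\rceil\,2^{-(5/36-3\delta/4)k_j}$. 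Since $k_j\ge j$, Markov and Borel--Cantelli give a.s.\ $N(\{t\in[0,s]:\rho_t\le x\},2^{-k_j})\le\lceil s2^{k_j}\rceil\,2^{-(5/36-\delta)k_j}$ for large $j$; as $\text{dim}_\text{M}$ is a liminf over $\epsilon\to0$ it is bounded by its value along $\epsilon=2^{-k_j}$, which is $\le 31/36+\delta$. Letting $\delta\downarrow0$ and varying $x,s$ through countable sets (using monotonicity of $N$ in the set) completes item~1.

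For item~2 we have $\liminf_k ka_k=:c_0>0$, so $a_k\ge c_0/(2k)$ for large $k$. Given $\epsilon>0$, fix $\delta''>0$ small and apply Lemma~\ref{lem: expectation_covering_upper} with $\epsilon=2^{-k}$ and $n=\lceil(4/3-\delta'')k\rceil$, so~\eqref{eq: n_condition} holds for large $k$. Now $y\le x/a_k\le(2x/c_0)k$, which is $\le n/2$ once $x$ is small, and $y/n\le(2x/c_0)/(4/3-\delta'')\to0$ as $x\downarrow0$; hence $\binom{n}{y}\le 2^{n\phi(x)}$ for some $\phi(x)\to0$, $\Cr{C: covering}^{\,y+1}\le 2^{O(x)k+O(1)}$, and $\pi_1(2^n)=2^{-(5/48)(4/3-\delta'')k+o(k)}$ by~\eqref{eq: arm_exponents_1_2_4}. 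Combining, $\mathbb{E}\,N(\{t\in[0,s]:\rho_t\le x\},2^{-k})\le\lceil s2^k\rceil\,2^{k(\nu(x,\delta'')+o(1))}$, where $1+\nu(x,\delta'')\to 1-(4/3-\delta'')(5/48)=31/36+(5/48)\delta''$ as $x\downarrow0$, so for $\delta''$ and then $x$ small enough (depending only on $\epsilon$ and $c_0$, hence on $F$) we obtain $1+\nu<31/36+\epsilon$. Markov, Borel--Cantelli, and the countable-set/monotonicity argument then give $\text{dim}^\text{M}(\{t\in[0,s]:\rho_t\le x\})\le31/36+\epsilon$ a.s.\ for all $s\ge0$.

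For item~3, $\liminf_k ka_k<\infty$ gives an increasing $(m_k)$ with $m_k a_{m_k}$ bounded; with $\delta$ the constant of~\eqref{eq: step_2} and $n_k=\lceil m_k/\delta\rceil$, one checks $\lfloor\delta n_k\rfloor=m_k$ (as $\delta<1$), so $n_k a_{\lfloor\delta n_k\rfloor}\le\delta^{-1}m_k a_{m_k}+a_{m_k}\le\Lambda'$ for a finite $\Lambda'=\Lambda'(F)$ and large $k$. Running Steps~1--3 of Section~\ref{sec: Minkowski} along $(n_k)$, with $L\ge1$, $\alpha>3/4$ and $M=\lceil2^{\alpha n_k}\rceil$: \eqref{eq: step_1_conditions} holds for large $k$, so~\eqref{eq: step_3} yields, with probability $\ge\Cr{c: another_new}$, that for infinitely many $k$ the covering number $N(\{t\in[0,1]:\rho_t\le\Cr{C: step_2}n_kLa_{\lfloor\delta n_k\rfloor}\},2^{-n_kL/9})$ exceeds $(\Cr{c: another_new}/2)2^{n_k(L/9-\beta)}$, for any $\beta>5/48$ and $L>9\alpha$. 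Since the threshold is $\le\Cr{C: step_2}L\Lambda'=:x$, the same bound holds for $\{t\in[0,1]:\rho_t\le x\}$, so $\text{dim}^\text{M}(\{t\in[0,1]:\rho_t\le x\})\ge1-9\beta/L$ with probability $\ge\Cr{c: another_new}$. Given $\epsilon>0$, pick $\beta$ near $5/48$ and $L$ large enough that $1-9\beta/L>1-\epsilon$; then $x=\Cr{C: step_2}L\Lambda'$ depends only on $\epsilon$ and $F$, and by stationarity $\mathbb{P}(\text{dim}^\text{M}(\{t\in[m,m+1]:\rho_t\le x\})>1-\epsilon)\ge\Cr{c: another_new}$ for every $m$. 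As in Step~4 of Section~\ref{sec: Minkowski} the sequence $(\text{dim}^\text{M}(\{t\in[m,m+1]:\rho_t\le x\}))_{m\ge0}$ is ergodic, so a.s.\ some $m$ realizes this event; monotonicity of $\text{dim}^\text{M}$ under enlargement of the set and in $s$ then gives $\mathbb{P}(\text{dim}^\text{M}(\{t\in[0,s]:\rho_t\le x\})>1-\epsilon)\to1$ as $s\to\infty$.

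The arguments are largely bookkeeping, and the place to be careful is the quantitative balancing. In item~2 one must verify that $1+\nu$ really decreases to $31/36$ as $x\downarrow0$ while~\eqref{eq: n_condition} stays valid, which forces the box exponent $n/k$ strictly below $4/3$ and so produces a loss $(5/48)\delta''$ that must be absorbed into $\epsilon$. In item~3 the analogous point is sharper: $n_k a_{\lfloor\delta n_k\rfloor}$ is only bounded, not $o(1)$, so $x$ cannot be sent to $0$; it is instead pinned to the (large) value of $L$ needed for $1-9\beta/L>1-\epsilon$, and the main thing to confirm is that this still produces a single $x=x(\epsilon,F)$ for which the positive-probability lower bound --- and hence, via ergodicity, the stated limit --- holds.
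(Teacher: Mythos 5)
Your proposal is correct and follows essentially the same approach as the paper's own sketch in Section~\ref{sec: formal_statements}: for items~1 and~2 you rerun the covering argument of Theorem~\ref{thm: Minkowski}(1) along a subsequence (item~1) or with $x$ taken small while $\Cr{C: x_inequality}$ is fixed (item~2), and for item~3 you rerun Steps~1--4 of Theorem~\ref{thm: Minkowski}(2) along a subsequence $(n_k)$ with $n_k a_{\lfloor\delta n_k\rfloor}$ bounded, choosing $x$ to dominate $\Cr{C: step_2}n_kLa_{\lfloor\delta n_k\rfloor}$. Your write-up usefully makes explicit some quantitative steps the paper leaves implicit (the $\lfloor\delta\lceil m_k/\delta\rceil\rfloor=m_k$ calculation, the balance of $\binom{n}{y}$, $\Cr{C: covering}^{y+1}$ and $\pi_1(2^n)$ as $x\downarrow0$, and the liminf-along-a-subsequence observation for $\textnormal{dim}_\textnormal{M}$), but the decomposition and key estimates match the paper's.
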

	Assuming the veracity of this theorem for the moment, take $F$ such that $\limsup_{k \to \infty} ka_k = \infty$ but $\liminf_{k \to \infty} ka_k \in (0,\infty)$. Then by item 3, we can choose $x$ and $s$ such that with probability at least $1/2$, the upper dimension of $\{t \in [0,s] : \rho_t \leq x\}$ is greater than $0.99$. For this same $x$ and $s$, item 1 implies that the lower dimension is $\leq 31/36$ a.s. Furthermore, for a different $x>0$ but the same $s$, item 2 implies that the upper dimension is at most $0.95$ a.s. Therefore we conclude the claims in $(\dagger)$.
	
	Because the proof is nearly identical to those appearing earlier in the paper, we only briefly indicate the necessary adjustments. For part 1, only cosmetic changes to the proof of the upper bound of Theorem~\ref{thm: Minkowski}(1) are needed. Instead of applying the argument that follows \eqref{eq: x_choice} to the entire sequence $ka_k$, we apply it to a subsequence $k_\ell a_{k_\ell}$ satisfying $k_\ell a_{k_\ell} \geq \Cr{C: x_inequality} > 0$ for some $\Cr{C: x_inequality}$. In the end, this produces item 1 of Theorem~\ref{thm: dental_caries} instead of the estimate \eqref{eq: end_upper_bound}.
	
	For item 2, we assume that $k a_k \geq \Cr{C: x_inequality}>0$ for some constant $\Cr{C: x_inequality}$. Now we select $x$ such that \eqref{eq: x_choice} holds for this fixed $\Cr{C: x_inequality}$, and then proceed through the ensuing argument. To produce \eqref{eq: almost_end} from the display above it, instead of increasing $\Cr{C: x_inequality}$, we decrease $x$. The rest of the argument is the same, and we end up with an upper bound of $31/36 + \delta$ for the upper Minkowski dimension, as in item 2 of Theorem~\ref{thm: dental_caries} with $\epsilon=\delta$, instead of \eqref{eq: end_upper_bound}. This argument requires $-x \log x$ to be of order $\epsilon$.
	
	In item 3, we assume that $\liminf_{k \to \infty} ka_k < \infty$, and so we can pick a subsequence $(n_k)$ such that $n_k a_{\lfloor \delta n_k \rfloor}$ is bounded. We follow the proof of Theorem~\ref{thm: Minkowski}(2) exactly until step 4 where, in \eqref{eq: new_x_choice}, we choose $x$ larger than $\Cr{C: step_2}n_k L a_{\lfloor \delta n_k \rfloor}$. We obtain \eqref{eq: almost_lower_bound} with this choice of $x$. By ergodicity again, a.s. there exists $s \geq 0$ such that $\textnormal{dim}^\textnormal{M}(\{t \in [0,s] : \rho_t \leq x\}) \geq 1-(9\beta)/L$. This is larger than $1-\epsilon$ assuming we choose $L$ large enough. In the end, our choice of $x$ is of order $1/\epsilon$.

	\bigskip
	\noindent
	\textbf{Acknowledgments.} J. H. thanks Georgia Tech for hospitality during visits related to this work.

\end{document}